\newtheorem{theorem}{Theorem}[section]
\newtheorem{lemma}[theorem]{Lemma}
\newtheorem{proposition}[theorem]{Proposition}
\newtheorem{corollary}[theorem]{Corollary}
\newtheorem{definition}[theorem]{Definition}
\newenvironment{proof}{{\flushleft \emph{Proof}:}}{\hfill\ding{110}}
\newenvironment{comment}{{\flushleft \fontfamily{pzc}\bfseries\large Comment:}}{}
\newenvironment{comments}{{\flushleft \fontfamily{pzc}\bfseries\large Comments:}}{}
\newenvironment{remark}{{\flushleft \fontfamily{pzc}\bfseries\large Remark:}}{}
\newenvironment{example}{{\flushleft {\fontfamily{pzc}\bfseries\large Example}:}}{\hfill 
$\blacktriangle\blacktriangle\blacktriangle$}
\newenvironment{proof1}[1]{{\flushleft \emph{#1}:}}{\hfill\ding{110}}
\newcommand{\secref}[1]{Section~\ref{#1}}
\newcommand{\figref}[1]{Figure~\ref{#1}}
\newcommand{\thmref}[1]{Theorem~\ref{#1}}
\newcommand{\defref}[1]{Definition~\ref{#1}}
\newcommand{\propref}[1]{Proposition~\ref{#1}}
\newcommand{\lemref}[1]{Lemma~\ref{#1}}
\newcommand{\corrref}[1]{Corollary~\ref{#1}}
\newcommand{\brk}[1]{\left(#1\right)}          
\newcommand{\Brk}[1]{\left[#1\right]}          
\newcommand{\BRK}[1]{\left\{#1\right\}}        
\newcommand{\beq}{\begin{equation}}
\newcommand{\eeq}{\end{equation}}
\providecommand{\R}{\mathbb{R}}
\newcommand{\Textand}{\qquad\text{ and }\qquad}
\newcommand{\g}{\mathfrak{g}}
\newcommand{\euc}{\mathfrak{e}}
\newcommand{\dist}{\operatorname{dist}}
\newcommand{\len}{\operatorname{len}}
\newcommand{\supp}{\operatorname{supp}}
\newcommand{\M}{\mathcal{M}}
\newcommand{\pl}{\partial}
\newcommand{\e}{\varepsilon}
\newcommand{\dis}{\operatorname{dis}}
\newcommand{\diam}{\operatorname{diam}}
\newcommand{\Leb}{\operatorname{Leb}}
\newcommand{\ind}{{\mathbbm{1}}}
\newcommand{\boldd}{\boldsymbol{d}}
\newcommand{\calF}{{\mathcal F}}
\newcommand{\calS}{{\mathcal S}}
\newcommand{\bbZ}{{\mathbb Z}}
\newcommand{\bbN}{{\mathbb N}}
\newcommand{\calR}{{\mathcal R}}
\newcommand{\calA}{{\mathcal A}}
\newcommand{\calB}{{\mathcal B}}
\newcommand{\calC}{{\mathcal C}}
\newcommand{\homega}{\widehat{\omega}}
\newcommand{\hOmega}{\widehat{\Omega}}
\newcommand{\prob}{\mathbf{P}}
\newcommand{\Exp}{\mathbf{E}}
\newcommand{\revision}[1]{{#1}}
\newcommand{\rerevision}[1]{{#1}}
\numberwithin{equation}{section}
\begin{document}

\title{Non-metricity in the continuum limit of randomly-distributed point defects}
\author{Raz Kupferman\footnote{Institute of Mathematics, The Hebrew University.
}, Cy Maor\footnotemark[1] $\,$ and Ron Rosenthal\footnote{Department Mathematik, ETH Z\"urich.} }
\date{}
\maketitle

\begin{abstract}

We present a homogenization theorem for isotropically-distributed point defects, by considering a sequence of manifolds with increasingly dense point defects. 
The loci of the defects are chosen randomly according to a weighted Poisson point process, making it a continuous version of the first passage percolation model.  
We show that the sequence of manifolds converges to a smooth Riemannian manifold, while the Levi-Civita connections converge to a non-metric connection on the limit manifold. Thus, we obtain rigorously the emergence of a non-metricity tensor, which was postulated in the literature to represent continuous distribution of point defects.
\end{abstract}

\tableofcontents


\section{Introduction}

The study of defects in solids with imperfections is a longstanding theme in \revision{materials-science}.
One of the prototypical crystalline defects are point defects (see e.g.~\cite{Kro81, Kro90}).
In crystalline materials, point defects may be caused by vacancies, interstitials, or impurities.
In amorphous materials, point defects can be viewed as either a contraction or a dilatation of local equilibrium distances between adjacent material elements.
Assuming that a defect-free body is modeled by a smooth Euclidean manifold (see e.g.~\cite{KMS15}), a body containing isotropic (i.e.~ball-shaped) point defects is modeled by a subset $D\subset \R^d$ endowed with a Riemannian metric of the form,
\beq
\label{eq:def_point_defects}
	\g_R = \phi \cdot \euc,\qquad 
	\phi(x) =
	\begin{cases}
		\xi^2 & |x-x_i| \leq \frac{1}{R} \text{ for some } i=1,\dots,m\\
		1 & |x-x_i| > \frac{1}{R} \text{ for every } i=1,\dots,m,
	\end{cases}
\eeq
where $\euc$ is the Euclidean metric on $\R^d$, the points $x_1,\ldots,x_m$ are the centers of the defects, $|\cdot|$ is the Euclidean norm, $1/R$ is the radius of a defect, and $\xi$ is the dilatation factor. In this work we focus on defects of vacancy type, hence $\xi<1$.
\revision{$x_i$ can be thought of as the loci of "missing", or "smaller" atoms in the material; "neighboring" atoms thus occupy the vacant location and get closer.}

\revision{In mechanics and materials science, a major theme is the modeling of} materials that contain distributed defects (see e.g.~\cite{Nol67,Wan67}). In continuum models, bodies with distributed defects are modeled as smooth manifolds, in which the singularities are smoothed out (or homogenized) and their density is represented by an additional geometric field.
For example, bodies with distributed dislocation-type defects have been modeled since the 1950s as a Riemannian manifold endowed with a metrically-consistent, {\em non-symmetric} flat connection (e.g.~\cite{Nye53,BBS55,BS56}). In this model, the density of the dislocations is represented by the torsion-field of the connection.

A model for distributed point defects has been much more elusive. 
It has been suggested that bodies with distributed point defects could be modeled as Riemannian manifolds with a flat, symmetric, {\em non-metric} connection (e.g.~\cite{Kro81} p. 300--304 and \cite{MR02}). There is, however, a big difference between the continuum models of dislocations and point defects. Since the 1950s, there has been a clear rationale---even if not a rigorous derivation---relating dislocations to torsion.
We are not aware of a similar rationale relating point defects to non-metricity.
In the words of Kr\"oner in his seminal review (\cite{Kro81} p. 304):
\begin{quote}
{\em
We are, however, completely aware of the fact that these ... identifications [of point defects and non-metricity] have not the same degree of certainty as the ... identification ... of dislocations and torsion.
}
\end{quote}

In this paper we present a rigorous analysis of the homogenization of point defects. Similarly to the homogenization of edge-dislocations \cite{KM15, KM15b}, we obtain a manifold endowed with a non-metric connection as a limit of manifolds with distributed defects.
Specifically, we consider a sequence of manifolds with increasingly-dense point defects. As the density of the defects tends to infinity, the sequence of locally-smooth manifolds with singularities converges to a smooth Riemannian manifold.
The Levi-Civita connection on each manifold in the sequence is, wherever it is defined, the Levi-Civita connection of $\R^d$. When the distribution of the point defects is not uniform, the Levi-Civita connection of $\R^d$ is {\em inconsistent} with the limit metric, i.e.~has a non-zero non-metricity tensor. This is the source of non-metricity in the limit.
A surprising feature of our result is that the limit metric (and hence the non-metricity tensor) is not that expected from volume vs. length considerations, see \secref{sec:Length_volume_inconsistency}.

In this paper, we investigate isotropic distributions of isotropic point defects. That is, the distribution is locally invariant to rotations (isotropy of the distribution), and the defects are ball shaped (isotropy of the defects).
A natural way to achieve such a distribution is to randomly select the loci of the defects using a weighted Poisson point process. The precise model, which is detailed in \secref{sec:model}, turns out to be a continuous version of the first passage percolation model on the Euclidean lattice, thus making it an interesting probabilistic model on its own; see Subsection \ref{sec:model} for more details.

The structure of this paper is as follows: 
In \secref{sec:overview} we give a rather informal presentation of the main results (without getting into the probabilistic details), and discuss their geometric and \revision{materials-science/mechanics} consequences. \secref{sec:overview} is the most relevant for the geometric and material-science-oriented reader.
In \secref{sec:model} we describe the probabilistic model for the distribution of the point defects and discuss its connections to the probabilistic literature. After a list of definitions and notations in \secref{sec:notation_and_definitions}, we state the main results in \secref{sec:subsection_main_results}.
Since the proofs are rather technical, we provide in  \secref{sec:proof_sketch} a sketch of the proof of our main results. The detailed proofs are presented in  Sections \ref{sec:Uniform_dist_of_points}--\ref{sec:proof_main_thm}.

\paragraph{Acknowledgements}
We are very grateful to Marcelo Epstein for suggesting us the question of homogenization of defects, and to Pavel Giterman for fruitful discussions.
\revision{We are also grateful to the anonymous referees, who pointed out some errors, and helped us to improve the readability of the paper.}
The first author is partially supported by the Israel Science Foundation and by the Israel-US Binational Foundation. The third author is partially supported by an ETH fellowship.

\section{Setting and main results}

\subsection{Overview of the results and discussion}
\label{sec:overview}

Let $d\ge 2$ be the dimension and let $\xi\in (0,1)$ be the dilatation factor of the point defects. Ignoring momentarily the probabilistic details, our main result (\thmref{thm:Main_theorem}) is  roughly as follows:
\begin{quote}
{\em There exists a $u_*>0$ and a continuous monotonically-decreasing function $\eta: [0,u_*)\to (0,1]$ such that the following holds:
Let $D\subset \R^d$ be a compact $d$-dimensional manifold with corners. 
Let $u:D\to [0,u_*)$ be a continuous function. 
Let $(D,\g_R)_{R>0}$ be a family of manifolds containing $R^d \cdot \int_D u $ point defects of intensity $\xi$ and radius $1/R$, randomly distributed in $D$ with distribution $u$. 
Then $(D,\g_R)$ converges (in the Gromov-Hausdorff sense) as $R\to\infty$ to the Riemannian manifold $(D, (\eta\circ u) \cdot \euc)$.}
\end{quote}

This result holds also if $\xi=0$ (i.e., if point defects correspond to the removal of a ball and the identification of its boundary as a single point). The case of $\xi=0$ involves however semi-metrics, hence to simplify the presentation we will only consider in this subsection the case $\xi>0$.

The next subsections discuss geometric and material-science consequences of our main theorem.

\subsubsection{Non-metricity}

The Riemannian (Levi-Civita) connection for each of the manifolds $(D,\g_R)$ coincides with the Euclidean Levi-Civita connection $\nabla$ of $\R^d$, whenever it is defined, i.e., everywhere except for the boundaries of the defects. Thus, as $R\to\infty$, the connection converges \revision{(in $L^\infty$)} to the Euclidean connection.
If $u$ is not constant (i.e.~the point defects are distributed non-uniformly), then the limiting Riemannian metric is not Euclidean, hence the limit connection does not coincide with the Levi-Civita connection of the limit metric. In other words, parallel transport with respect to the limit connection $\nabla$ is not an isometry in the limit manifold.

In fact, one can consider the convergence of Riemannian manifolds with connections,
\[
(D,\g_R,\nabla)\to (D,(\eta\circ u) \cdot \euc,\nabla),
\]
in which case Riemannian manifolds with metric connections converge to a Riemannian manifold with a non-metric connection.

If $\eta\circ u$ is differentiable, then the non-metricity tensor $Q_{kij}$ of $\nabla$ is given in coordinates by
\beq
\label{eq:non_metricity}
Q_{kij} = \nabla_{k} \g_{ij} = \pl_k \g_{ij} - \Gamma^l_{ik} \g_{lj} - \Gamma^l_{jk} \g_{il} 
		= \pl_k (\eta\circ u)\, \delta_{ij} = \frac{\pl_k (\eta\circ u)}{\eta\circ u} \, \g_{ij}.
\eeq
where $\g_{ij}$ are the coordinate components of the metric $(\eta\circ u) \cdot \euc$, and  $\Gamma^l_{jk}$ are the Christoffel symbols of $\nabla$, which are identically zero.

In particular, the non-metricity tensor is diagonal with respect to the metric. 
This is consistent with the model presented in \cite{YG12} for bodies with distributed point defects: Riemannian manifolds with flat, symmetric, non-metric connections with a non-metricity tensor diagonal with respect to the metric (in \cite{YG12} such manifolds are called Weyl manifolds).

We believe that the fact that the off-diagonal components of the non-metricity tensor are zero is only a result of our choice of \emph{isotropic} point defects (i.e.~ball-shaped). For different choices of point defects we expect the emergence of non-diagonal non-metricity tensors;
see open questions below.

\subsubsection{Curvature}

An immediate corollary of our main result is that any subset of $\R^d$ endowed with a conformally-flat Riemannian metric can be obtained as a limit of Euclidean manifolds with point defects. 

In particular, the limit manifold can have non-zero curvature, even though the point defects do not carry any curvature charge. This is similar to the case of dislocations, where it is only the limit connection that is flat---not the limit metric (see \cite{KM15b}).
	
\subsubsection{Length-volume inconsistency}
\label{sec:Length_volume_inconsistency}

The Gromov-Hausdorff convergence of  $(D,\g_R)$ to $(D,(\eta\circ u) \cdot \euc)$ as $R\to\infty$ is a convergence of distance functions in metric spaces. Another property that converges as $R\to\infty$ is the measure $\nu_R$ induced by the Riemannian metric $\g_R$. It weakly converges to the measure $\mu_{\sigma\circ u}$ induced by the Riemannian metric $(\sigma\circ u)\cdot \euc$ on $D$, where
\[
\sigma(u) = \brk{e^{-u \kappa_d}+\xi^d(1-e^{-u\kappa_d})}^{1/d},
\]

and $\kappa_d=\frac{\pi^{d/2}}{\Gamma(d/2+1)}$ is the volume of the $d$-dimensional unit ball.
Indeed, for constant distribution $u$, the defects cover as $R\to\infty$ a fraction of $(1-e^{-u\kappa_d})$ of the manifold; this remains true locally for non-constant $u$.
In other words, the sequence of \revision{metric measure} spaces $(D,\dist_{R}^D, \mu_{R})$ converges in the measured-Gromov-Hausdorff topology (see \defref{def:mGH} below) to the \revision{metric measure} space $(D,\boldd_{\eta\circ u}^D,\mu_{\sigma\circ u})$, where $\dist_{R}^D$ and $\boldd_{\eta\circ u}^D$ are the intrinsic distance functions induced on $D$ by $\g_R$ and $(\eta\circ u) \cdot \euc$, respectively. 
See part 2 of \thmref{thm:Main_theorem} and \corrref{cor:mGH_convergence}  for details.

A na\"ive guess would be that $\eta$ equals $\sigma$\revision{, since then, both the limit distance function and the limit measure are derived from the same Riemannian metric ($\sigma$ is the $d$-th root of the volume reduction), like the distance functions and measures for every finite $R$}.
Even though our analysis does not yield an explicit formula for $\eta$, we show that, in fact, $\eta\revision{<}\sigma$ (part 3 of \thmref{thm:Main_theorem}) \revision{and in the case $\xi=0$, even $\eta\le \sigma^d$ (actually $\eta<\sigma^d$ is achievable, see a remark in Section~\ref{sec:eta_upper_bound})}. Hence, the limit metric and the limit measure are inconsistent with each other. This can be viewed as another type of non-metricity, not related to the connection, which, to our knowledge, has not been mentioned in the material-science literature so far.

\subsubsection{Open questions}

We conclude this section by raising several natural questions awaiting further analysis:

\begin{enumerate}

\item 
In the present work we assume that point defects are spherically-symmetric (``isotropic" point defects).
A natural question is what is the limit if one takes non-isotropic point defects, say ellipsoids. 
\revision{
In the non-isotropic case, our analysis predicts convergence to some limiting metric space; the latter is not 
conformally-Euclidean, unlike the isotropic case.
}
It is not clear, however, whether the  limit distance function is induced by a Riemannian metric (a plausible alternative would be a Finsler metric). 
		
If the limit distance function is induced by a Riemannian metric, then we expect the resulting non-metricity tensor not to be diagonal with respect to the metric as in \eqref{eq:non_metricity}. 
Either way, whether the limit metric is Riemannian or Finsler, this will show that Weyl manifolds (in the sense of \cite{YG12}) are not the most general model for distributed point defects, as suggested in \cite{YG12}.
		
\item 
A similar question arises if the defects are placed on a grid deterministically (or if the distribution is not isotropic). As the grid spacing tends to zero, we conjecture the appearance of a non-Riemannian limit metric, even if the entire structure is symmetric (say, cubic defects on a cubic grid).

\item 
This work focuses on the phenomenological question of {\em describing} bodies with distributed point defects. Another natural question is how do the non-metricity and the length-volume inconsistency manifest in the {\em mechanical}, or elastic properties of the body. 
That is, if each manifold $(D,\g_R)$ represents an elastic body with some elastic energy density related to its metric, what is the elastic energy functional in the $R\to\infty$ limit?
\end{enumerate}

\subsection{Probabilistic setting}
\label{sec:model}

Let $d\geq 2$. We consider the space of locally finite point measures,
\beq \label{eq:Om_defn}
	\Omega = \BRK{\omega=\sum_{i\geq 0} \delta_{x_i} ~:~ \begin{array}{l}
	x_i \in\R^d \text{ for all } i\geq 0 \text{ and } \omega(A)<\infty \\
	\text{for all compact } A\subset \R^d 
	\end{array}}
\eeq
with its natural $\sigma$-algebra $\calF$ generated by the evaluation maps $\omega\mapsto \omega(A)$, with $A$ running over all Borel-measurable sets in $\R^d$.

For $x\in\R^d$ and $r\geq 0$, let 
\[
\overline{B(x,r)}=\{y\in\R^d ~:~ |x-y|\le r\}
\] 
be the closed Euclidean ball of radius $r$ around $x$. 

Let $\xi\in[0,1)$. Given $\omega\in\Omega$ and $R>0$ we denote 
\[
\calS_R(\omega) =\bigcup_{x\in\textrm{supp}(\omega)} \overline{B(x,1/R)},
\] 
and define 
a Riemannian (semi-)metric on $\R^d$
\beq
\label{eq:def_g_R}
\g_R(x;\omega) =
	\begin{cases}
		\euc & x\notin \calS_R(\omega),\\
		\xi^2\cdot \euc & x\in \calS_R(\omega),
	\end{cases}
\eeq
where $\euc$ is the Euclidean metric on $\R^d$. We will often remove $\omega$ from the notation when no confusion occurs. For $\xi>0$, $\g_R$ is a Riemannian metric, and for $\xi=0$ it is a semi-metric. 
Let $\dist_R$ be the distance function induced by $\g_R$, that is
\beq \label{eq:defn_dist}
	\dist_R(x,y;\omega)=\inf\{\len_R(\gamma)~:~\gamma\in\Gamma(x,y)\},
\eeq
where $\Gamma(x,y)$ denotes all the paths from $x$ to $y$, and $\len_R(\gamma)$ is the length of $\gamma$ induced by $\g_R$. If $\xi=0$, $\dist_R$ is a semi-distance.

A note on nomenclature: the term ``metric" is commonly used in two different contexts---for a Riemannian metric on a smooth manifold and for a distance function in a metric space. Since the distinction between the two is at the heart of the present work, we will consistently call the first a metric and the second a distance.

Finally, denote by $\nu_R$ the measure on $\R^d$ induced by $\g_R$,
\beq
\label{eq:def_mu_R_R_d}
	\nu_R(A;\omega) = \Leb_d(A\setminus \calS_R(\omega)) + \xi^d \Leb_d(A\cap \calS_R(\omega)),
\eeq
where $\Leb_d$ is the $d$-dimensional Lebesgue measure and $A\subseteq \R^d$ is a Lebesgue measurable set.

The triple $(\R^d,\dist_R,\nu_R)$ is a metric measure space if $\xi>0$. In order to obtain a metric measure space for $\xi=0$, we define the equivalence relation,
\beq\label{eq:equivalence_relation}
	x \overset{\omega,R}{\sim} y \hspace{0.4cm}\Leftrightarrow \hspace{0.4cm}
	\text{$x=y$ or $x,y$ are in the same connected component of $\calS_R(\omega)$}.
\eeq
In other words, for every $x\in\mathrm{supp(\omega)}$, we identify all the points in $\overline{B(x,1/R)}$. 

For given $\omega\in\Omega$ and $R>0$ the equivalence relation yields a metric measure space $(\M_R,\dist_R,\nu_R)$, where 
\beq
		\M_R = \R^d/\overset{R}{\sim}.
\eeq
Denoting by $\pi_R:\R^d\to\M_R$ the equivalence class map associated with $\overset{R}{\sim}$ (and with a slight abuse of notation),
\begin{equation}\label{eq:dist_M_R}
	\dist_R(x,y) := \dist_R(\pi_R^{-1}(x),\pi_R^{-1}(y)),\quad \forall x,y\in\M_R.
\end{equation}
and
\begin{equation}\label{eq:measure_M_R}
	\nu_R(A) := \nu_R(\pi_R^{-1}(A)),\quad \forall \text{ Borel set $A\subset\M_R$}.
\end{equation}

Note that in \eqref{eq:dist_M_R} both $\pi_R^{-1}(x)$ and $\pi_R^{-1}(y)$ may
contain more than one element. However, the distance doesn't depend on the choice of the representatives by Definition \eqref{eq:defn_dist} of $\dist_R$. We denote by $\M_R'$ the set $\{x\in \M_R : |\pi_R^{-1}(x)| =1\}$,
where here and below $|\cdot|$ denotes the cardinality of a set.

\begin{comment} 
In simple words, given $\omega\in\Omega$ and $R>0$ the metric measure space $\M_R=\M_R(\omega)$ is  the metric measure space induced from $\R^d$ by 
identifying all the points in the closed balls $\overline{B\brk{x_i,1/R}}$, where $x_i$ are the points in the support of $\omega$.
\end{comment}

Similarly to the full space, for a path-connected Lebesgue-measurable subset $D\subset \R^d$,
we denote by $\dist_R^D$ the intrinsic distance/semi-distance induced by $\g_R$ on $D$, that is 
\begin{equation}\label{eq:defn_of_dist_R^D}
	\dist_R^D = \inf\{\len_R(\gamma) ~:~ \gamma\in \Gamma(x,y),~\gamma \subset D\},
\end{equation}
and denote by $\nu_R^D$ the restriction of $\nu_R$ to $D$.
For $\xi>0$, this yields a metric measure space $(D,\dist_R^D,\nu_R^D)$. For $\xi =0$,
we denote
\begin{equation}\label{eq:defn_of_D_R}
	D_R = D/\overset{R}{\sim},
\end{equation}
obtaining a metric measure space $(D_R, \dist_R^D,\nu_R^{D})$, 
where $\dist_R^D$ and $\nu_R^D$ are defined (with a slight abuse of notation) as the pullback of $\dist_R^D$ and $\nu_R^D$ by $\pi_R$, similarly to the definitions in \eqref{eq:dist_M_R} and \eqref{eq:measure_M_R}.

Note: in order to address at the same time the cases $\xi=0$ and $\xi>0$,
we will sometimes write $D_R$ and $\pi_R$ even if $\xi>0$. In this case $D_R=D$ and $\pi_R$ is the identity map (when $\xi>0$, all points in $D$ are distinct).

Given a function $u:\R^d\to [0,\infty)$ and $R>0$, we denote by $\prob_{u,R}$ the probability measure on $(\Omega,\calF)$ under which $\omega$ is a Poisson point process with intensity $R^d u(x)\cdot \Leb_d(dx)$, see \cite[Chapter 3]{Re87}  for details on Poisson point processes. We denote by $\Exp_{u,R}$ the corresponding expectation. Note that as $R\to\infty$, the number of point defects grows like $R^d$ whereas the volume in $\R^d$ of each point defect scales like $1/R^d$, 
which is why we expect the measure to converge.

We will show below (Lemma \ref{lem:Basic_properties_of_M_R}) that for $\xi=0$ and for every density function $u:\R^d\to [0,\infty)$ taking values in a compact set of an interval $[0,u_*)$, $u_*=u_*(d)>0$, the metric space $\M_R$ is $\prob_{u,R}$-a.s.~simply-connected and locally isometric to the Euclidean space, everywhere except for a nowhere dense set. This is also trivially true for the case $\xi>0$, in which $(\R^d,\g_R)$ is locally isometric to the Euclidean plane at every continuity point of $\g_R$.

\subsubsection{Discussion on the probabilistic model}

If the function $u:\R^d\to [0,\infty)$ is spatially invariant (i.e.~constant), one can view our model as a continuous version of first passage percolation on $\bbZ^d$, in which one associate a random weight with each edge independently according to some weight distribution $F$, see for example \cite{Ke84}. Our specific choice of points defects is related to the function $F(\xi)=p$ and $F(1)=1-p$ for a certain choice of $p$ that depends on $u$. Note however that one can easily generalize the model to general weights by sampling the dilation factor of each ball randomly according to some distribution. 

The discrete model of first passage percolation was introduced by Hammersley and Welsh \cite{HW65}, and was studied extensively since then; see e.g.~ \cite{Ke84,GK84,Ke93,BKS03} and the references therein. The continuous version of this model is based on its continuous counterpart for percolation (known as the Boolean model); see \cite{Ha85,MR96}. To the best of our knowledge, there is no existing work on a continuous version of first passage percolation.

For a general continuous function $u$, the discrete counterpart of the model is first passage percolation with independent but not identically distributed edge weights.

\subsection{Notation and definitions}
\label{sec:notation_and_definitions}

In this subsection we list notations and definitions that will be used throughout the paper. 

\subsubsection{Norms}

\begin{itemize}
\item \makebox[0.9cm]{$|\cdot|$\hfill} -- The inner-product (Euclidean) norm on $\R^d$.

\item \makebox[0.9cm]{$|\cdot|_\infty$\hfill} -- The supremum norm on $\R^d$. 

\item \makebox[0.9cm]{$\|\cdot\|_\infty$\hfill} -- The supremum norm on continuous functions. 

\end{itemize}

\subsubsection{Riemannian metrics}
\begin{itemize}

\item $\euc$ -- The Euclidean metric on $\R^d$.

\item $\g_R=\g_R(\cdot;\omega)$ -- The (random) metric on $\R^d$ induced by point defects according to \eqref{eq:def_g_R} (or equivalently \eqref{eq:def_point_defects}).

\end{itemize}

\subsubsection{Distance functions}

\begin{itemize}

\item Given a continuous function $\rho:\R^d\to (0,\infty)$, 
\begin{equation}\label{eq:defn_of_d_rho}
	\boldd_\rho = \begin{array}{l}\text{the distance function induced}\\
	\text{by the Riemannian metric $\rho(x) \cdot \euc$}.
	\end{array}
\end{equation}

\item For a path-connected compact subset $D\subset\R^d$,  
\begin{equation}\label{eq:defn_of_d_rho^D}
	\boldd_\rho^D = \begin{array}{l}
	\text{the intrinsic distance function induced} \\
	\text{by the Riemannian metric $\rho(x)\cdot \euc \text{ on } D$.}
	\end{array}
\end{equation}

\item $\dist_R=\dist_R(\cdot;\omega)$ -- The distance function induced by $\g_R$ on $\R^d$ or $\M_R$ (see \eqref{eq:defn_dist} and \eqref{eq:dist_M_R}).

\item $\dist$ -- a shortened notation for $\dist_1$ used in Sections \ref{sec:Uniform_dist_of_points}--\ref{sec:concentration_results}.

\item $\dist_R^D=\dist_R^D(\cdot;\omega)$ -- The intrinsic distance function induced by $\g_R$ on a path-connected Lebesgue-mesurable set $D\subset \R^d$, or on $D_R$ (see \eqref{eq:defn_of_dist_R^D} and \eqref{eq:defn_of_D_R}).

\item For two compact sets $A,B\subset \R^d$, we denote by $d_H(A,B)$ their Hausdorff distance with respect to the Euclidean metric on $\R^d$, 
\[
d_H (A,B) = \inf \BRK{ \e>0 ~:~ 
\begin{array}{l}
	\forall x\in A\, \exists y\in B, ~\text{s.t. } |x-y|<\e ~\text{and}\\
	\forall x\in B\, \exists y\in A, ~\text{s.t. } |x-y|<\e
\end{array}	
}.
\]

\item For compact metric spaces $X$ and $Y$, we denote by $d_{GH}(X,Y)$ their Gromov-Hausdorff distance (see \defref{def:mGH} below).

\end{itemize}

\subsubsection{Measures}

\begin{itemize}

\item $\Leb_d$ -- The Lebesgue measure on $R^d$.

\item $\mu_\rho$ -- The measure induced on $\R^d$ by the volume form of the Riemannian metric $\rho\cdot \euc$.

\item $\mu_\rho^D$ -- The restriction of $\mu_\rho$ to $D$.

\item $\nu_R=\nu_R(\cdot;\omega)$ -- The measure induced by $\g_R$ on $\R^d$ or $\M_R$ (see \eqref{eq:def_mu_R_R_d} and\eqref{eq:measure_M_R}).

\item $\nu_R^D=\nu_R^D(\cdot;\omega)$ -- The restriction of $\nu_R$ to a subset $D\subset \R^d$ (or to $D_R$).

\end{itemize}

\subsubsection{Other notations}

\begin{itemize}

\item $B(x,r)$ -- The (open) Euclidean ball of radius $r>0$ around $x\in \R^d$.

\item $\overline{A}$ -- The closure of a set $A\subset \R^d$.

\item $\diam_\euc(A) = \sup\{|x-y| ~:~ x,y\in A\}$ -- The Euclidean diameter of a set $A\subset \R^d$.

\item $S^{d-1}$ -- The Euclidean sphere $\{x\in\R^d ~:~ |x|=1\}$.

\item $SO(d)$ -- The special orthogonal group in $\R^d$ with respect to the Euclidean inner-product.

\item $\kappa_d=\frac{\pi^{d/2}}{\Gamma(d/2+1)}$ -- The volume of the $d$-dimensional unit ball.

\item By a path in $\R^d$ we will always mean a continuous parametrized path $[0,1]\to\R^d$. 

\item $\len_\euc(\gamma)$ -- The Euclidean length of a path $\gamma$.

\item $\len_R(\gamma)=\len_R(\gamma;\omega)$ -- The length of a path $\gamma$ induced by $\g_R$.

\item $\len(\gamma)$ -- a shortened notation for $\len_1(\gamma)$ used in Sections \ref{sec:Uniform_dist_of_points}--\ref{sec:concentration_results}.
.

\item $[x,y]$ -- The linear segment connecting $x$ and $y$ in $\R^d$.

\item $\text{Lip}(f)$ -- The Lipschitz constant of a continuous function $f:D\to \R$ for some $D\subset \R^d$ \revision{(with respect to the Euclidean metric)}.

\item $C(D)$ - The space of continuous functions on $D$.
\end{itemize}

\subsubsection{Measured Gromov-Hausdorff convergence}
\label{subsec:2.3.6}

\begin{definition}
\label{def:mGH}
In the following $(Z,d_Z)$ is a compact metric space and $(Z,d_Z,\mu_Z)$ is a compact \revision{metric measure} space.
\begin{enumerate}
\item	
For a function $f:(X,d_X)\to (Y,d_Y)$ between two metric spaces, we define the \textbf{distortion} of $f$ by
		\beq
		\label{eq:def_distortion}
			\dis f = \sup_{x,y\in X} |d_X(x,y) - d_Y(f(x),f(y))|. 
		\eeq
A function $f:D(f)\subset X \to Y$ is called an \textbf{$\e$-approximation} if $\dis f <\e$, $B_X(D(f),\e) = X$ and $B_Y(\text{Im}(f),\e) = Y$, where $B_X(A,\e)$ is the $\e$-neiborhood in $X$ around a subset $A$, with a similar definition for neighborhoods of sets in $Y$.

\item 
The \textbf{Gromov-Hausdorff distance} is a distance function between isometry classes of compact metric spaces. 
For the purpose of this paper, it is enough to state that $d_{GH}((X,d_X),(Y,d_Y)) < 4\e$ if there exists an $\e$-approximation $f:D(f)\subset X \to Y$. For further details see e.g.~\cite{Pet06}, Chapter 10.
		
\item 
A sequence $(X_n,d_n,\mu_n)$ of compact \revision{metric measure} spaces converges to a compact \revision{metric measure} space $(X,d,\mu)$ in the \textbf{measured Gromov-Hausdorff topology} if there exists a sequence $f_n: D(f_n)\subset X_n \to X$ of $\e_n$-approximations, with $\e_n\to0$,  such that the pushforward measures $(f_n)_\#\mu_n$ weakly converge to $\mu$. That is, for every continuous function $\Phi:X\to\R$,
\[
\lim_{n\to\infty} \int_{D(f_n)} \Phi\circ f_n \, d\mu_n = \int_X \Phi \, d\mu.
\]
\end{enumerate}
\end{definition}

\subsubsection{Remark about constants}

Throughout the paper, constants are denoted by $C$ and $c$. The dependence of constants on parameters will be denoted by brackets. For example, $C=C(d)$ implies that $C$ only depends on the dimension $d$. Note that the value of such constants may change from one line to the next. 
Numbered constants $C_1,C_2,...$ have a fixed value which is determined in their first appearance.


\subsection{Main results}
\label{sec:subsection_main_results}

Our main result is the following:

\begin{theorem}\label{thm:Main_theorem}
Let $d\geq 2$. There exists a real number $u_*>0$ depending only on $d$, and there exists a continuous, non-constant, monotonically-decreasing function $\eta:[0,u_*)\to (0,1]$ with $\eta(0)=1$ that depends only on $d$ and $\xi\in[0,1)$, such that for every compact $d$-dimensional submanifold $D\subset\R^d$ with corners and every continuous function $u:D\to [0,u_*)$:
\begin{enumerate}
\item Metric convergence:
For every $\e>0$,
\begin{equation}
\label{eq:Main_distortion}
	\lim_{R\to\infty} \prob_{u,R} \brk{ \sup_{x,y\in D} \left| \boldd_{\eta\circ u}^D(x,y) - \dist_R^D(\pi_R(x),\pi_R(y)) \right| < \varepsilon }=1.
\end{equation}
		
\item Measure convergence:
For every $\e>0$,
\begin{equation}
\label{eq:Main_measure}
\lim_{R\to\infty} \prob_{u,R} \brk{ \forall f\in W(D),\,\,  \left| \int_D f \, d\mu_{\sigma\circ u}^{D}  - \int_{D_R} f\circ\pi_R^{-1} \, d\nu_R^{D} \right| <\varepsilon }=1,
\end{equation}
where 
\[
\revision{
W(D) = \BRK{ f\in C(D) : \|f\|_\infty \le 1 \,\,\text{and}\,\, \text{Lip}(f) \le 1 },}
\]
and
\[
\sigma(u) = \brk{e^{-u \kappa_d}+\xi^d(1-e^{-u\kappa_d})}^{1/d}.
\] 
	
\item Length-volume incompatibility:
\[
\eta(u) \le e^{-u\kappa_d}+ \xi (1-e^{-u\kappa_d}), 
\]
and in particular \revision{$\eta(u)<\sigma(u)$; if $\xi=0$, then $\eta(u)\le \sigma^d(u)< \sigma$.} 
	
\item 
The following is of interest when $\xi=0$:
for every $\e>0$,
\begin{equation}
\label{eq:Main_surjective}
	\lim_{R\to\infty} \prob_{u,R} \brk{ d_H(D,\pi^{-1}_R(D'_R)) <\varepsilon }=1,
\end{equation}
where $D'_R = \{x\in D_R ~:~ |\pi_R^{-1}(x)|=1\}$.
In other words, $\pi_R^{-1}:D'_R\to D$ is asymptotically surjective.

\end{enumerate}
\end{theorem}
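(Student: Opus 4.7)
The plan is to establish Part 1 first and deduce the remaining parts from its core estimates. I would begin with spatially \emph{constant} intensity $u$, where translation invariance of the Poisson process permits a clean limit. The scaling identity that $\dist_R$ under intensity $R^du\cdot\Leb_d$ agrees in distribution with $R^{-1}\dist_1$ under intensity $u\cdot\Leb_d$ reduces the task to analyzing $\dist_1(0,Rv)/R$ for unit $v\in\R^d$. Subadditivity of $\dist_1$ combined with translation stationarity places this in the Kingman--Liggett subadditive-ergodic framework, yielding a deterministic limit that, by the rotational symmetry of the model, takes the form $\eta(u)|v|$. The function $\eta:[0,u_*)\to(0,1]$ is monotone decreasing by a Poisson-thinning coupling (adding defects can only shorten paths) and continuous by bracketing $u$ between smaller and larger constant intensities; positivity on $[0,u_*)$ must be controlled via a lower bound forcing any path to spend a definite Euclidean length outside the defects, which is the reason for the threshold $u_*$.

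For Part 1 with non-constant $u$ I would localize. Partition $D$ into a mesh of cubes $Q_i$ on which $u$ oscillates by at most $\delta$, each cube having side length much larger than $1/R$. On $Q_i$ the restricted Poisson process is sandwiched, via Poisson thinning, between homogeneous processes of intensities $R^d(u(x_i)\pm\delta)$, so a quantitative version of the constant-$u$ result---with scale-uniform concentration bounds obtained via independence on disjoint cells and Poisson functional concentration---yields $\dist_R(x,y)\approx \eta(u(x_i))|x-y|$ uniformly for $x,y\in Q_i$. Concatenating near-geodesics across cubes and invoking the triangle inequality gives the uniform convergence $\dist_R^D(x,y)\to \boldd_{\eta\circ u}^D(x,y)$ required in \eqref{eq:Main_distortion}. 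Part 2 is then a law of large numbers for the Poisson process: the void-probability formula gives $\prob_{u,R}(x\in\calS_R(\omega))=1-e^{-u(x)\kappa_d}$, so the expected density of $\nu_R$ converges to $\sigma^d(u)$; Poisson concentration on disjoint cells upgrades this to $\int f\,d\nu_R^D\to\int f\cdot\sigma^d(u)\,d\Leb_d$, and equicontinuity on the Lipschitz-$1$ class $W(D)$ via Arzel\`a--Ascoli gives the uniform-in-$f$ statement \eqref{eq:Main_measure}.

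For Part 3 I would plug the Euclidean straight segment $\gamma$ from $0$ to $Rv$ into the definition of $\eta(u)$: the same void-probability calculation yields $\Exp[\len_R(\gamma)]/R=|v|(e^{-u\kappa_d}+\xi(1-e^{-u\kappa_d}))$, whence $\eta(u)\le e^{-u\kappa_d}+\xi(1-e^{-u\kappa_d})$; strict convexity of $x\mapsto x^d$ on $[0,1]$ and Jensen's inequality give $\eta(u)<\sigma(u)$ for $u>0$, and for $\xi=0$ the same bound reads $\eta(u)\le \sigma(u)^d<\sigma(u)$. Part 4 follows from a covering estimate: for large $R$ the expected Lebesgue volume of $\calS_R\cap B(x,\e)$ is bounded by a Poisson constant times $(1-e^{-u_*\kappa_d})\Leb_d(B(x,\e))$, so Poisson concentration ensures every Euclidean $\e$-ball in $D$ contains an uncovered point with high probability. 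The main obstacle is the constant-$u$ case of Part 1: obtaining concentration of $\dist_1(0,Rv)$ around its mean $\eta(u)R|v|$ that is both quantitative in $R$ and uniform in the intensity $u\in[0,u_*)$. Unlike edge-weight first-passage percolation on $\bbZ^d$, the continuous Boolean model lacks independent edge weights---overlapping defect balls induce long-range correlations---so the classical Hammersley--Welsh/Kesten machinery must be rebuilt using cell-independence of the Poisson process together with modern Poisson functional concentration inequalities, with bounds sharp enough to survive the localization patching that drives all four parts of the theorem.
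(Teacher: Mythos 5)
Your architecture matches the paper's almost exactly: constant $u$ via the scaling coupling and the Kingman--Liggett subadditive ergodic theorem plus rotational invariance; positivity, monotonicity and continuity of $\eta$ via thinning couplings; localization to cubes on which $u$ is nearly constant for general $u$; void probabilities plus cellwise concentration for Part 2; the straight segment plus Jensen for Part 3 (your Jensen step $\brk{p+\xi q}^d< p+\xi^d q$ is exactly the right deduction of $\eta<\sigma$). Your Part 4 argument (second-moment concentration of $\Leb_d(\calS_R\cap B(x,\e))$ plus a union bound over a net) is a genuinely different and arguably simpler route than the paper's, which instead uses exponential decay of subcritical cluster diameters; both work.

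However, there are two genuine gaps, and you have only named, not closed, the one you yourself identify as the main obstacle. First, the quantitative, $u$-uniform concentration of $\dist_1(0,Rv)$ around $\eta(u)R|v|$, \emph{and} the positivity of $\eta$ on $(0,u_*)$, both rest on a large-deviation estimate of the following type: with exponentially high probability, every true geodesic of Euclidean length $\ge R$ has $\dist$-length at least $R/\alpha$. ``Modern Poisson functional concentration'' will not deliver this: such inequalities concentrate a functional around its \emph{mean}, whereas the issue here is a lower tail relative to the (unknown) linear growth rate, and the paper obtains it by rebuilding Kesten's block/renormalization argument around the continuous BK (disjoint-occurrence) inequality for the Boolean model, with a nontrivial combinatorial count of block configurations and a separate deterministic bound on how much Euclidean length a true geodesic can accumulate inside a box. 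Without some version of this, $\eta(u)>0$ is unproven and the whole statement collapses (the codomain $(0,1]$ is part of the claim). Second, you pass silently from the ambient distance $\dist_R$ to the intrinsic distance $\dist_R^{Q_i}$ within each cube (you even write $\dist_R(x,y)\approx\eta(u(x_i))|x-y|$ for $x,y\in Q_i$, which is the ambient statement). To conclude that restricting paths to a cube, and ultimately to $D$, does not increase distances in the limit, you need the geodesic-concentration result that $\e$-geodesics stay within Hausdorff distance $\e|x-y|$ of the segment $[x,y]$ with high probability; this is itself a consequence of the large-deviation estimates above and is an indispensable ingredient of the patching step, not a routine triangle-inequality matter.
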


\thmref{thm:Main_theorem} implies the following corollary:

\begin{corollary}$~$
\label{cor:mGH_convergence}
\begin{enumerate}

\item Part 1 of \thmref{thm:Main_theorem} implies that for every $\e>0$,
\begin{equation}
\label{eq:GHconvergence}
	\lim_{R\to\infty} \prob_{u,R}\brk{d_{GH}\brk{(D_R,\dist_R^D),(D,\boldd_{\eta\circ u}^D)}<\varepsilon}=1.
\end{equation}

Moreover, for every sequence $(\omega_n,R_n,\e_n)$, where $\omega_n\in \Omega$, $R_n>0$ and $\e_n\to 0$ such that the events in \eqref{eq:Main_distortion}--\eqref{eq:Main_surjective} hold with $\e_n$ and $R_n$, the sequence of \revision{metric measure} spaces
$(D_{R_n},\dist_{R_n}^D, \nu_{R_n}^{D})(\omega_n)$ converges in the measured-Gromov-Hausdorff topology to the \revision{metric measure} space $(D,\boldd_{\eta\circ u}^D,\mu_{\sigma\circ u}^{D})$.

\item Part 3 in \thmref{thm:Main_theorem} implies that while the measures $\mu_{R_n}^{D}$ and the distance functions $\dist_{R_n}^D$ on $D_{R_n}$ are consistent, the limit measure $\mu_{\sigma\circ u}$ is strictly larger than the measure $\mu_{\eta\circ u}$ induced by the metric $(\eta\circ u)\cdot \euc$ associated with the limit distance $\boldd_{\eta\circ u}^D$.

\end{enumerate}
\end{corollary}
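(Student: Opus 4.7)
\emph{Proof proposal.} The strategy is to unpack \thmref{thm:Main_theorem} into the language of (measured) Gromov--Hausdorff convergence by exhibiting suitable $\e$-approximations. For the Gromov--Hausdorff assertion in part 1 of the corollary, I take the quotient map $\pi_R : D \to D_R$ itself as the approximation. Its distortion is exactly the supremum controlled by \eqref{eq:Main_distortion}, while the two density conditions in \defref{def:mGH} are automatic because $\pi_R$ has full domain $D$ and is surjective onto $D_R$. Hence on the event in \eqref{eq:Main_distortion} (with threshold $\e$), $d_{GH}((D_R,\dist_R^D),(D,\boldd_{\eta\circ u}^D))<4\e$, which gives \eqref{eq:GHconvergence}.

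For the measured Gromov--Hausdorff statement the direction of the approximation must be reversed, so I would pick a measurable section $s_n : D_{R_n} \to D$ of $\pi_{R_n}$ (the identity when $\xi>0$; when $\xi=0$, choose one fixed representative --- say a defect center --- from each multi-point equivalence class). The distortion of $s_n$ equals that of $\pi_{R_n}$, hence is controlled by $\e_n$ via \eqref{eq:Main_distortion}. The density condition $B_D(s_n(D_{R_n}),\e_n)=D$ is trivial for $\xi>0$; for $\xi=0$, $s_n(D_{R_n}) \supseteq \pi_{R_n}^{-1}(D'_{R_n})$ and \eqref{eq:Main_surjective} supplies precisely the needed Hausdorff-density in $D$. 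For weak convergence of $(s_n)_\#\nu_{R_n}^D$ to $\mu_{\sigma\circ u}^D$, I observe that for every $f\in W(D)$,
\[
\int_D f\, d((s_n)_\#\nu_{R_n}^D) = \int_{D_{R_n}} f\circ s_n\,d\nu_{R_n}^D = \int_{D_{R_n}} f\circ \pi_{R_n}^{-1}\,d\nu_{R_n}^D,
\]
where the last equality holds since for $\xi=0$ the measure $\nu_{R_n}^D$ assigns zero mass to the multi-point classes (so the section choice is immaterial) and for $\xi>0$ the section is the identity. Combined with \eqref{eq:Main_measure}, this shows that the bounded-Lipschitz distance between $(s_n)_\#\nu_{R_n}^D$ and $\mu_{\sigma\circ u}^D$ tends to zero; a standard approximation argument --- uniformly approximating any $\Phi\in C(D)$ by Lipschitz functions rescaled into $W(D)$, together with the uniform bound $\nu_{R_n}^D(D)\le\Leb_d(D)$ --- then upgrades this to weak convergence against all continuous test functions.

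Finally, part 2 of the corollary follows directly from $d\mu_\rho = \rho^{d/2}\,d\Leb_d$ applied to $\rho=\eta\circ u$ and $\rho=\sigma\circ u$, together with the pointwise strict inequality $\eta(t)<\sigma(t)$ for $t>0$ given by part 3 of \thmref{thm:Main_theorem}: this yields $\mu_{\eta\circ u}^D(A)<\mu_{\sigma\circ u}^D(A)$ on any Borel $A\subset D$ meeting $\{u>0\}$ in positive Lebesgue measure. The only mildly delicate steps in the argument are the use of \eqref{eq:Main_surjective} to secure density of $s_n(D_{R_n})$ in $D$ when $\xi=0$, and the approximation argument extending bounded-Lipschitz control on $W(D)$ to weak convergence on $C(D)$; everything else is a definition-chase.
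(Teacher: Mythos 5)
Your proposal is correct and follows essentially the same route as the paper: $\pi_R$ as the $\e$-approximation for \eqref{eq:GHconvergence}, an inverse-type map for the measured statement with \eqref{eq:Main_surjective} supplying density of its image and \eqref{eq:Main_measure} plus Lipschitz approximation and the uniform bound $\nu_{R_n}^D(D)\le\Leb_d(D)$ giving weak convergence, and the pointwise inequality $\eta<\sigma$ for part 2. The only (immaterial) difference is that you use a globally defined section $s_n$ of $\pi_{R_n}$ where the paper uses the partial map $\pi_{R_n}^{-1}$ with domain $D'_{R_n}$ as permitted by \defref{def:mGH}; both are handled by the observation that the collapsed classes are $\nu_{R_n}^D$-null.
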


\revision{
\begin{comments}
\begin{enumerate}
\item The constant $u_*$ is the percolation threshold; for $u>u_*$, $\calS_1$ contains almost surely  a unique infinite connected component (see also a remark at the end of Section~\ref{sec:4.2}). 
\item The assumption of $u$ being continuous can be relaxed; for example, the theorem holds if the $d$-dimensional Hausdorff measure of the set of discontinuous points of $u$ is zero.
\item The bound $\eta\le \sigma^d$ when $\xi=0$ is not tight. With a more complicated argument it is possible to show $\eta<\sigma^d$. See a remark at the end of Section~\ref{sec:eta_upper_bound}.
\item Throughout this paper, we assume that "manifolds with corners" do not include cusps. This assumption can be presumably relaxed.
\end{enumerate}
\end{comments}
}


\section{Sketch of the proof}
\label{sec:proof_sketch}

In the remaining sections of this paper, we prove \thmref{thm:Main_theorem} and \corrref{cor:mGH_convergence}. 
For \revision{the sake of brevity}, we assume that $\xi=0$, which is the most difficult case; \revision{the proofs for $\xi>0$ are similar, and in certain parts, much simpler}.
Since the proof is long and technical, this section describe its main stages.

In Sections \ref{sec:Uniform_dist_of_points}--\ref{sec:convergene_uniform_distribution}, we consider a uniform distribution of point defects, that is, the case where $u$ is a constant function. In  \secref{sec:proof_main_thm}, we generalize the results to arbitrary continuous functions $u:D\to [0,u_*)$.

\paragraph{\secref{sec:Uniform_dist_of_points}:}
For a constant $u$, a uniform rescaling of $\R^d$ enables the introduction of a natural coupling between the probability measures $(\mathbf{P}_{u,R})_{R>0}$ using a single measure $\mathbf{P}_u$ that is used in the rest of the proof for the uniform case.

We show the existence of a sub-critical regime, i.e., a constant $u_*=u_*(d)>0$, such that for $u\in[0,u_*)$, the process $\mathbf{P}_u$-a.s.~does not percolate. In this regime, $\M_R$ is ``nice''; it is $\mathbf{P}_u$-a.s.~a simply-connected metric space, which is locally isometric to the Euclidean space but for a nowhere-dense set.
From this point onward we work only in the subcritical regime $u<u_*$.

We then use the subadditive ergodic theorem to obtain the existence of a limit distance function in $\M_R$. We show that for every $u\in[0,u_*)$ there exists an $\eta(u)\in [0,1)$ such that 
\[
\prob_u\brk{\lim_{R\to\infty} \dist_R(\pi_R(x),\pi_R(y)) = \eta(u)\cdot |x - y|,\quad\forall x,y\in\R^d}=1.
\]
Moreover,  this limit is uniform in every compact $K\subset \R^d$.
This establishes the $\mathbf{P}_u$-a.s.~Gromov-Hausdorff convergence of $(K_R, \dist_R)$ to $(K,\mathbf{d}_{\eta(u)})$.

Note, however, that this is not quite the metric convergence we want in \eqref{eq:Main_distortion} for a uniform $u$, since we want to prove that $(K_R, \dist^D_R)$ converges to $(K,\mathbf{d}^D_{\eta(u)})$ (i.e.~the intrinsic distances converge and not only the induced distances). 
Moreover, we need to prove that $\eta(u)>0$ for $u\in (0,u_*)$. This is done in the next sections.

\paragraph{\secref{sec:large_deviations}:} 
In this section we prove large deviation results for the distance function in $\M_R$. An immediate corollary is that $\eta(u)>0$ for $u\in (0,u_*)$.
The key idea here is to exploit the independence structure of the Poisson point process, manifested in the BK inequality, in order to show that  distances do not deviate significantly from their expected value.

\paragraph{\secref{sec:concentration_results}:}
In this section we prove several results regarding the geometry of geodesics in $(\M_R,\dist_R)$, and properties of the function $\eta$ that controls the limit metric.

By using the concentration results of Section \ref{sec:large_deviations}, we prove that geodesics in $\M_R$ are, with high probability, very close to straight lines between their endpoints. 

Using a coupling between the probability measures $(\mathbf{P}_u)_{u\geq 0}$, we prove that $\eta(u)$ is a continuous, monotonically-nondecreasing function,  and give an upper bound on the value of $\eta(u)$ that implies that $\eta<\sigma$ (which proves part 3 of \thmref{thm:Main_theorem}). 

Finally, we use the ergodicity of the model to prove that $\nu_R([0,1]^d)$ converges $\mathbf{P}_u$-a.s.~to $\mu_{\sigma(u)}([0,1]^d)$ (which is a step towards proving the measure convergence in part 2 of \thmref{thm:Main_theorem}).

\paragraph{\secref{sec:convergene_uniform_distribution}:}
In this section we prove Parts 1 and 4 of \thmref{thm:Main_theorem} for uniform distributions on a convex, compact $d$-dimensional manifold with corners $D\subset \R^d$.
The idea behind the proof of Part 1 is as follows: from \secref{sec:Uniform_dist_of_points} we know that $\mathbf{P}_u$-a.s.~$(D_R, \dist_R)$ Gromov-Hausdorff converges to $(D,\mathbf{d}_{\eta(u)})$. Since $D$ is convex and $u$ is constant, $\mathbf{d}_{\eta(u)}= \mathbf{d}^D_{\eta(u)}$ on $D$, so we only need to replace $(D_R, \dist_R)$ with $(D_R, \dist^D_R)$.
We do so by showing that the identity mapping between $(D_R, \dist_R)$ and $(D_R, \dist^D_R)$ has $\mathbf{P}_u$-a.s.~vanishing distortion. Here we use the result from \secref{sec:concentration_results} that geodesics in $\M_R$ are very close to straight lines, which implies that $\dist_R$-geodesics between points tend to remain within $D$, i.e.~they are also $\dist^D_R$-geodesics with high probability.

\paragraph{\secref{sec:proof_main_thm}:} 
In this section, we conclude the proof of \thmref{thm:Main_theorem} and \corrref{cor:mGH_convergence} for a general continuous distribution $u$ of defects over a $d$-dimensional manifold with corners $D\subset \R^d$. 

The idea is the following: we partition $D$ into small cubes, such that $u$ is approximately constant in each cube. 
We show that the results of Sections \ref{sec:concentration_results} and \ref{sec:convergene_uniform_distribution} apply approximately to each of the cubes, in the sense that for every cube $\square$, when $R$ is large, the distance between the \revision{metric measure} spaces $(\square_R, \dist^\square_R,\nu^\square_R)$ and $(\square, \mathbf{d}^\square_{\eta\circ u}, \mu^\square_{\sigma\circ u})$ is bounded with high probability by the variation of $u$ in $\square$.

Finally, we glue the cubes together and obtain results for the whole manifold $D$.
For measure convergence, the gluing is straightforward.
For metric convergence, we use the control on the distortion in each cube and a bound on the number of cubes each geodesic crosses to control the Gromov-Hausdorff distance between $(D_R, \dist^D_R)$ and $(D,\mathbf{d}^D_{\eta(u)})$ (in a similar way as in \cite{KM15,KM15b}).

\section{Uniform distribution of point defects}\label{sec:Uniform_dist_of_points}

In this section, as well as in the three to follow, we study the simplest version of the model: we assume that the function $u:\R^d \to (0,\infty)$ is constant. In this section, we prove the existence of a subcritical regime (Lemma \ref{lem:Basic_properties_of_M_R}), and obtain our first main result regarding distances in the manifolds $\M_R$ (Theorem \ref{thm:distances_in_M_R}). Many of the results obtained in these sections are adaptations to the continuous setting of the results obtained in   \cite{HW65,Ke84} for the discrete case.

\subsection{Rescaling for uniform distributions}

The parameter $R>0$ is a scaling factor that affects both the density of the point defects and their magnitude, or volume. In particular, to every $R$ corresponds a different probability measure $\prob_{u,R}$.   
For spatially invariant $u$, the intensity measures are translationally-invariant. By a uniform rescaling of space, we may obtain a probabilistic model that \revision{does not} depend on $R$, and in particular allows us to construct a natural coupling of the measures $\mathbf{P}_{u,R}$. 

This is done as follows: construct a Poisson point process on $\R^d$ with intensity $u\cdot \Leb_d(dx)$, i.e., set $R=1$. We denote the probability measure by $\prob_u = \prob_{u,1}$, and the corresponding expectation by $\Exp_u = \Exp_{u,1}$. Two points in $\M_1$ are identified, $x \overset{\omega,1}{\sim} y$, if they are in the same ball of  radius $R=1$ centered at a point in the support of $\omega$. We denote by $\dist = \dist_1$ the corresponding distance function on $\R^d$, which can also be written as
\begin{equation} \label{eq:defn_dist2}
	\dist(x,y;\omega)=\inf\BRK{\sum_{i=1}^{N} |x_i-y_{i}| ~:~
	\begin{array}{l}
	 N\in\bbN,\,\,\, x_1\revision{\overset{1}{\sim}} x,\,\,\, x_{i+1}\revision{\overset{1}{\sim}} y_i, \,\,\, y_N\revision{\overset{1}{\sim}} y
	 \end{array}
	 }.
\end{equation}

Given $R>0$, let $T_R:\R^d\to\R^d$ be defined by $T_R(x)=Rx$.  Then, \revision{$(T_{1/R})_\#\omega$} is a Poisson point process with intensity $R^d u\cdot \Leb_d(dx)$, namely \revision{$\prob_{u,R} = (T_{1/R})_\# \prob_u$}. 
\revision{Here, $(T_{1/R})_\#$ is the push forward of measures by $T_{1/R}$, where the push forward of $\prob_u$ is the one induced by the push forward of $\omega$. 
That is
\[
(T_{1/R})_\#\omega = \sum_i \delta_{T_{1/R}x_i}
\qquad
\text{for}\qquad
\omega = \sum_i \delta_{x_i},
\]
and,
\[
(T_{1/R})_\# \prob_u(A) = \prob_u(\{\omega : \rerevision{(T_{1/R})_\#}\omega \in A\})
\]
 }
Similarly, we identify $\M_R$ with $\M_1$ via the scaling $T_{1/R}$. The distance function $\dist_R$ on $\R^d$ is derived from the distance function $\dist = \dist_1$ by the following relation,
\beq 
\label{eq:dist_R_coupled}
	\dist_R(x,y;\omega)=\frac{1}{R}\dist\brk{\revision{\pi_1(R \,\pi_R^{-1}(x)),\pi_1(R \,\pi_R^{-1}(y))};\omega},\qquad\forall x,y\in\M_R.
\eeq
\rerevision{
Note that there is a slight abuse of notation here: strictly speaking, $\dist_R(x,y;\omega)$ as defined in \eqref{eq:dist_R_coupled} coincides with $\dist_R(x,y;(T_{1/R})_\# \omega)$, as defined in \eqref{eq:defn_dist}.
However, the distribution of \eqref{eq:dist_R_coupled} with respect to $\prob_u$ is the same as the distribution of \eqref{eq:defn_dist} with respect to $\prob_{u,R}$.
In Sections \ref{sec:Uniform_dist_of_points}--\ref{sec:convergene_uniform_distribution}, where the above coupling is used, we use $\dist_R(x,y;\omega)$ in the sense of \eqref{eq:dist_R_coupled}, so that $\prob_u$ can be used for all values of $R$.
}

\subsection{The sub-critical regime}
\label{sec:4.2}

We start our analysis by proving the existence of a subcritical regime: 

\begin{lemma}
\label{lem:Basic_properties_of_M_R}
For $\omega\in\Omega$ let $\calS(\omega) = \calS_1(\omega)=\bigcup_{x\in\supp(\omega)} \overline{B(x,1)}$. Then
\begin{enumerate}[label={(\arabic*)}]
\item 
Each of the connected components of $\calS(\omega)$ is $\prob_u$-almost surely closed. 

\item 
There exists a constant $u_*>0$, depending only on $d$, such that for all $u\in(0,u_*)$ 
\begin{equation}\label{eq:no_infinite_cluster}
	\prob_u\brk{\begin{array}{l}
	\text{The set }\calS~\text{doesn't contain an}\\
	\text{infinite connected component}
	\end{array}
	}=1
\end{equation}
and for every $u>u_*$
\begin{equation}\label{eq:unique_infinite_cluster}
	\prob_u\brk{\begin{array}{l}
		\text{The set }\calS~\text{contains a unique}\\
		\text{infinite connected component}
		\end{array}
		}=1.
\end{equation}
In particular, \revision{recalling that $\xi=0$,} whenever $u\in(0,u_*)$ the simply connected metric space $\M_R$ is $\prob_u$-a.s.~locally isometric to the Euclidean space, up to a nowhere dense set.
\end{enumerate}
\end{lemma}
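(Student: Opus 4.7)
The plan is to prove the lemma in four stages, combining standard continuum Boolean-model percolation techniques with a topological observation about the quotient $\M_R$.

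For part (1), local finiteness of $\omega\in\Omega$ implies that every compact $K\subset\R^d$ is met by only finitely many of the balls $\overline{B(x,1)}$ with $x\in\supp(\omega)$, so $\calS(\omega)$ is a locally finite union of closed balls and is therefore closed in $\R^d$. Every connected component of a closed subset of $\R^d$ is closed in $\R^d$ (a connected component coincides with the intersection of $\calS$ with its own closure in $\R^d$), so (1) follows.

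For the existence of both phases in part (2), I would compare to Bernoulli site percolation on $\bbZ^d$. Tile $\R^d$ by axis-aligned cubes $Q_z$ of side length $L$, $z\in\bbZ^d$, declare $Q_z$ occupied if $\omega(Q_z)>0$, and note that the occupancies are independent Bernoulli random variables with parameter $p(u)=1-e^{-uL^d}$. Two overlapping balls $\overline{B(x,1)},\overline{B(y,1)}\subset\calS$ require $|x-y|\le 2$, so their enclosing cubes $Q_z,Q_{z'}$ satisfy $|z-z'|_\infty\le\ell:=\lceil 2/L\rceil+1$; any infinite connected component of $\calS$ therefore forces an infinite cluster in the $\ell$-range site percolation on $\bbZ^d$ at density $p(u)$. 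A standard Peierls contour argument gives a threshold $p_0(d,\ell)>0$ below which no such infinite cluster exists a.s., yielding some $u_{\mathrm{sub}}>0$ for which \eqref{eq:no_infinite_cluster} holds. For supercriticality, take $L<1/\sqrt{d}$ so that adjacent occupied cubes automatically produce overlapping balls; then every infinite cluster of occupied cubes produces an infinite component of $\calS$, and for $u$ large enough $p(u)>p_c(\bbZ^d)$, giving some $u_{\mathrm{sup}}<\infty$. Superposition of independent Poisson processes provides a monotone coupling in $u$, rendering the event $\{\calS\text{ percolates}\}$ monotone non-decreasing in $u$; by ergodicity of the Poisson process under $\R^d$-translations its probability is either $0$ or $1$, so setting $u_*=\inf\{u>0:\prob_u(\calS\text{ percolates})=1\}$ gives $0<u_*<\infty$ together with the phase characterization of \eqref{eq:no_infinite_cluster}--\eqref{eq:unique_infinite_cluster} (up to uniqueness of the infinite cluster).

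Uniqueness of the infinite cluster for $u>u_*$ would follow from a Burton--Keane-type argument adapted to the Poisson Boolean model. By ergodicity, the number of infinite clusters is $\prob_u$-a.s.\ constant and belongs to $\{0,1,\infty\}$; the value $\infty$ is ruled out by a trifurcation entropy estimate, in which a local insertion of Poisson points in a large ball $B(0,N)$ produces trifurcations whose count must grow slower than $|\partial B(0,N)|$, contradicting the existence of infinitely many infinite clusters. I expect this step to be the most delicate, as the continuum analog of the trifurcation construction requires careful verification that the local insertion operates in an absolutely continuous direction of the Poisson law and genuinely produces trifurcations joining three distinct infinite clusters with positive probability.

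Finally, for the last sentence of the lemma, assume $u\in(0,u_*)$. Then $\prob_u$-a.s.\ every connected component $C_i$ of $\calS$ is bounded, hence compact, and the pinch points $p_i=\pi_1(C_i)\in\M_R$ form a countable, locally finite (hence nowhere dense) subset. On $\M_R\setminus\{p_i\}$, which is open and dense, the quotient map restricts to a homeomorphism from $\R^d\setminus\calS$ onto its image that transfers the Euclidean metric verbatim, establishing the claimed local isometry. Simple connectedness of $\M_R$ then follows from a direct loop-lifting argument: each $C_i$ being a compact, path-connected finite union of closed balls allows any continuous loop in $\M_R$ to be lifted to a continuous loop in $\R^d$ by filling in each pinch-point visit with an actual path within $C_i$ between the entry and exit points, and since $\R^d$ is simply connected the null-homotopy of the lift descends under the quotient map.
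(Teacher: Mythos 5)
Your proposal is correct in substance but takes a genuinely different, more self-contained route for part (2). The paper proves part (1) exactly as you do (local finiteness of the Poisson points makes $\calS$ closed), and for part (2) it simply \emph{cites} the continuum-percolation literature: the existence of the critical intensity $u_*$ is taken from Hall [Ha85] (see also [MR96, Theorem~3.3]) and the uniqueness of the infinite cluster from [MR96, Theorem~3.6]. You instead rebuild the phase transition from scratch via a renormalization onto $\ell$-range Bernoulli site percolation on $\bbZ^d$: the path-counting bound for the subcritical phase, the choice $L<1/\sqrt{d}$ forcing adjacent occupied cubes to yield overlapping unit balls for the supercritical phase, and the monotone coupling plus translation ergodicity to pin down a single threshold $u_*$ are all sound (one should just note that an unbounded component of $\calS$ yields an \emph{infinite connected} set of occupied cubes because the intersection graph of the balls in a connected component is connected, which you implicitly use). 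What your approach buys is independence from the Boolean-model literature; what the paper's citation buys is brevity and, more importantly, a complete proof of \emph{uniqueness} of the infinite cluster for $u>u_*$, which is the one place where your argument is only a sketch. You correctly identify the Burton--Keane trifurcation step as the delicate point and do not carry it out; rather than adapting it yourself, you could (and probably should) cite [MR96, Theorem~3.6] exactly as the paper does. Since the rest of the paper works entirely in the subcritical regime, this does not affect anything downstream.

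For the final assertions, your treatment is if anything more careful than the paper's: the paper asserts simple connectedness in one line by saying the identified subsets are simply connected (which need not literally hold for a union of overlapping balls), whereas collapsing \emph{connected} compact sets to points in $\R^d$ is what actually matters, and your loop-lifting argument (or a van Kampen argument) supplies this. Your local-isometry claim on $\M_R\setminus\{p_i\}$ should strictly include the observation that no path exiting a small ball disjoint from $\calS$ can be shorter than the straight segment, but this is at the same level of detail as the paper's own proof.
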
	

\revision{
\begin{comment}
If $\xi>0$, $\M_R$ is almost everywhere locally isometric to a Euclidean space, with a scaling constant depending on whether the point is the interior of $\calS$ or in the complement of $\calS$. 
\end{comment}
}

\begin{comment}
The subset $\M_R'$ of $\M_R$ that is locally isometric to the Euclidean space can be identified with $\R^d\setminus\calS(\omega)$, \revision{not only as sets, but also as Riemannian manifolds}. However, they are not globally isometric. Note that $\M_R'$ is $\prob_u$-a.s.~not connected.
\end{comment}

\begin{proof}
The fact that each connected component of $\calS(\omega)$ is $\prob_u$-almost surely closed follows from the fact that with $\prob_u$-probability one, every bounded set contains only finitely many points of $\supp(\omega)$. This implies that \revision{$\prob_u$-almost surely, the point process does not have accumulation points, hence} the complement of $\calS(\omega)$ is $\prob_u$-almost surely open. 

The existence of  $u_*>0$ such that $\calS(\omega)$ doesn't contain $\prob_u$-a.s.~infinite clusters for every $u\in (0,u_*)$ and contains $\prob_u$-a.s.~an infinite cluster  for $u>u_*$ is the content of \cite{Ha85}; see also Theorem~3.3 in \cite{MR96}. 
The uniqueness of the infinite cluster for $u>u_*$ can be found in \cite{MR96} Theorem~3.6.

Since $\M_R$ is obtained from $\R^d$ by a similarity transformation and an identification of points in simply-connected subsets, $\M_R$ is simply connected. As proved above, $\R^d\setminus \calS(\omega)$, which is identical to $\M_R'$ up to a similarity transformation, is open. It follows that $\M_R'$ is locally isometric to Euclidean space. 

Finally, we need to show that for $u\in (0,u_*)$ the set $\M_R\backslash \M_R'$ is nowhere dense. This follows from the fact that any compact subset of $\R^d$ contains $\prob_u$-a.s.~only finitely many points in $\textrm{supp}(\omega)$, hence every compact subset of $\M_R$ contains only finitely many points in $\M_R\backslash \M_R'$.
\end{proof}

\revision{
\begin{comment}
It can be shown that for $u>u_*$ and $d=2$, in every box of sufficiently large radius $R$, the distance of every point from the unique infinite component is at most $O(\log R)$. As a result, the limiting distancs $\dist_R$ between every pair of points is zero.
\end{comment}
}

\subsection{Distances in $\M_R$ in the uniform case}

The main result of this section proves the existence of a limit distance function in $\M_R$. The precise statement is as follows:

\begin{theorem}\label{thm:distances_in_M_R} 
For every $u\in[0,u_*)$ there exists an $\eta(u)\in[0,1]$ such that 
\beq
	\prob_u\brk{\lim_{R\to\infty} \dist_R(\pi_R(x),\pi_R(y)) = \eta(u)\cdot |x - y|,\quad\forall x,y\in\R^d}=1.
	\label{eq:4.5}
\eeq
The limits also exist in the $L^1(\Omega,\prob_u)$ sense: for every $x,y\in\R^d$,
\begin{equation}
\lim_{R\to\infty} \Exp_u\Brk{\left|\dist_R(\pi_R(x),\pi_R(y)) - \eta(u) \cdot |x - y|\right|} = 0.
\end{equation}
Furthermore, the convergence of the distance function is uniform in  every compact $K\subset\R^d$ (and in particular in $S^{d-1}$; \revision{this particular case is used below}),
\begin{equation}
\prob_u\brk{\begin{array}{c}
\forall \epsilon>0\,\, \exists R_0\,\, \text{such that}\,\, \forall R>R_0,\,\,\\
\sup_{x,y\in K} \left|\dist_R(\pi_R(x),\pi_R(y)) - \eta(u)\cdot |x - y|\right|<\epsilon 
\end{array}} = 1
\end{equation}
and 
\begin{equation}
	\lim_{R\to\infty} \sup_{x,y\in K}\Exp_u\Brk{\left|\dist_R(\pi_R(x),\pi_R(y)) - \eta(u)\cdot |x - y|\right|} = 0.
\end{equation}
\end{theorem}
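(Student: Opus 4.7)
\emph{Plan.} I would reduce the claim, via the coupling \eqref{eq:dist_R_coupled} of Section~4.1, to a subadditive ergodic theorem for a single Poisson process $\omega$ of intensity $u\,\Leb_d$ under $\prob_u$. In those terms the claim is that $\prob_u$-a.s., for every $x,y\in\R^d$,
\[
\lim_{R\to\infty}\frac{1}{R}\dist\bigl(\pi_1(Rx),\pi_1(Ry)\bigr) = \eta(u)\,|x-y|,
\]
with suitable $L^1$ and uniform-on-compacts strengthenings. Because $\dist(\pi_1(a),\pi_1(b))\le |a-b|$, the family of maps $(x,y)\mapsto \frac{1}{R}\dist(\pi_1(Rx),\pi_1(Ry))$ is equi-$1$-Lipschitz on $\R^d\times\R^d$. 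By an $\e$-net plus triangle-inequality argument, this will let me promote $\prob_u$-a.s.~pointwise convergence on a countable dense set of pairs to $\prob_u$-a.s.~uniform convergence on every compact $K\subset\R^d$. Consequently it is enough to establish the a.s.~and $L^1$ statements for a single pair and to show that the limit is deterministic and depends only on $|x-y|$.

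\emph{Subadditive ergodic theorem.} Fix $e\in S^{d-1}$ and set $X_{m,n} = \dist(\pi_1(me),\pi_1(ne))$ for integers $m<n$. Subadditivity $X_{l,n}\le X_{l,m}+X_{m,n}$ is the triangle inequality for $\dist$. The shift $T_e$ on the point process preserves $\prob_u$ (Lebesgue intensity is translation-invariant) and sends $X_{m,n}$ to $X_{m-1,n-1}$, supplying the stationarity required by Kingman's theorem; integrability is immediate from $X_{0,n}\le n$. For ergodicity of $T_e$, Poisson independence on disjoint Borel sets makes $T_e$ mixing, hence ergodic. Kingman's theorem then produces a deterministic $\eta(u,e)\in[0,1]$ with $X_{0,n}/n \to \eta(u,e)$ both $\prob_u$-a.s.~and in $L^1(\Omega,\prob_u)$. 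Rotational invariance of $\prob_u$ (i.e.~isotropy of the intensity measure) forces $\eta(u,e)=:\eta(u)$ to be independent of $e$, since the limits are deterministic and have the same law under any rotation. The bound $|\dist(\pi_1(0),\pi_1(se)) - X_{0,\lfloor s\rfloor}|\le 1$ then extends the convergence from integer to real $s\to\infty$ along every ray.

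\emph{From one pair to all pairs, and $L^1$ on compacts.} For $x\neq y$, taking $s=R|x-y|$ and $e=(y-x)/|x-y|$ and using translation invariance of $\prob_u$, the quantity $\frac{1}{R}\dist(\pi_1(Rx),\pi_1(Ry))$ reduces to the case treated above, so the a.s.~and $L^1$ convergence to $\eta(u)|x-y|$ holds for every fixed pair. Intersecting the resulting full-measure events over $(x,y)\in\mathbb{Q}^d\times\mathbb{Q}^d$ and applying the equi-$1$-Lipschitz bound on an $\e$-net in any compact $K\subset\R^d$ yields a single full-measure event on which the convergence is uniform on $K\times K$, which proves \eqref{eq:4.5} and the uniform a.s.~statement. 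For the uniform $L^1$ statement, the random variables
\[
f_R(x,y) = \Bigl|\tfrac{1}{R}\dist\bigl(\pi_1(Rx),\pi_1(Ry)\bigr) - \eta(u)\,|x-y|\Bigr|
\]
are bounded on $K\times K$ by $(1+\eta(u))\diam_\euc(K)$ and $2$-Lipschitz in $(x,y)$ uniformly in $R$ and $\omega$; hence $\Exp_u[f_R]$ is equi-$2$-Lipschitz on $K\times K$, and pointwise convergence $\Exp_u[f_R(x,y)]\to 0$ (via dominated convergence applied to the a.s.~limit) upgrades to uniform convergence on $K\times K$ by another $\e$-net argument.

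\emph{Main obstacle.} The only genuinely nontrivial technical step is verifying Kingman's hypotheses in the continuous Poisson setting -- in particular ergodicity of the $\bbZ$-action generated by $T_e$ on $(\Omega,\calF,\prob_u)$ -- together with the isotropy step that kills the dependence of $\eta(u,e)$ on $e$. Everything else (the Euclidean $1$-Lipschitz domination of $\dist$, translation invariance of $\prob_u$, $\e$-net promotions, and dominated convergence) is soft.
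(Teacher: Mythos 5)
Your proposal is correct and follows essentially the same route as the paper: reduce via the rescaling coupling to the intensity-$u$ process, apply Kingman/Liggett's subadditive ergodic theorem along a fixed ray using translation invariance and ergodicity of the Poisson process, use isotropy of $\prob_u$ to make the limit direction-independent, and then promote to all pairs and to uniformity on compacts via the $1$-Lipschitz domination $\dist \le |\cdot|$ and an $\e$-net argument. The only cosmetic difference is that you parametrize along unit rays (building in homogeneity), whereas the paper records homogeneity and subadditivity of $\rho_u(y)=\lim_R \dist(0,Ry)/R$ as separate lemmas before invoking isotropy.
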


In Sections \ref{sec:large_deviations}--\ref{sec:concentration_results} below we prove certain properties of $\eta(u)$, and in particular that $\eta(u)>0$ for every $u\in[0,u_*)$.

\revision{
Equation~\eqref{eq:4.5} and similar equations hereafter should be interpreted as follows: 
\[
\prob_u\brk{\BRK{\omega\in\Omega ~:~ 
\lim_{R\to\infty} \dist_R(\pi_R(x),\pi_R(y);\omega) = \eta(u)\cdot |x - y|,\quad\forall x,y\in\R^d}}=1,
\]
where $\dist_R$ is understood as in \eqref{eq:dist_R_coupled}.
}

An immediate corollary of \thmref{thm:distances_in_M_R} is:

\begin{corollary}
For every $u\in[0,u_*)$ and compact $K\subset\R^d$, the sequence of metric spaces $(K_R,\dist_R)$ defined by \eqref{eq:defn_of_D_R} $\prob_u$-a.s.~Gromov-Hausdorff converges to $(K,\boldd_{\eta(u)})$, 
where $\eta(u)$ should be considered as a constant function on $\R^d$.
\end{corollary}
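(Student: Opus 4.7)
My plan is to apply Kingman's subadditive ergodic theorem to distances along a lattice ray, use rotation invariance of $\mathbf{P}_u$ to remove the dependence on direction, and promote pointwise a.s.\ convergence to uniform convergence on compacts via Lipschitz continuity. First I use the coupling (4.4) to reduce the claim to proving that under $\mathbf{P}_u$,
$$
\tfrac{1}{R}\,\dist(\pi_1(Rx),\pi_1(Ry))\longrightarrow\eta(u)\,|x-y|
$$
almost surely and in $L^1$, uniformly for $(x,y)$ in compact subsets. The key elementary observation I will reuse is that $\dist(\pi_1(a),\pi_1(b))\le |a-b|$ for all $a,b\in\R^d$: the Euclidean segment from $a$ to $b$ projects to an admissible path in $\M_1$ of length at most $|a-b|$, since portions inside vacancy balls collapse to zero after identification. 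Consequently $(a,b)\mapsto\dist(\pi_1(a),\pi_1(b))$ is $1$-Lipschitz in each argument with respect to the Euclidean metric.

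Next I fix the direction $e_1$ and set $X_{m,n}:=\dist(\pi_1(me_1),\pi_1(ne_1))$ for integers $0\le m\le n$, and verify Kingman's hypotheses: (i) $X_{m,n}\le X_{m,k}+X_{k,n}$ from the triangle inequality; (ii) the law of $\{X_{m+1,n+1}\}$ equals that of $\{X_{m,n}\}$ by translation invariance of $\mathbf{P}_u$ under $x\mapsto x+e_1$; (iii) $0\le X_{0,n}\le n$ by the Lipschitz bound above, giving integrability; and (iv) ergodicity (indeed mixing) of the $\bbZ^d$-shift action on the homogeneous Poisson law $\mathbf{P}_u$, which is classical. Kingman's theorem then produces a deterministic $\alpha\in[0,1]$ with $X_{0,n}/n\to\alpha$ a.s.\ and in $L^1$. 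Repeating the argument along any direction $e\in S^{d-1}$ and invoking rotation invariance of $\mathbf{P}_u$ (the intensity is constant) shows the same limit arises for every direction; I set $\eta(u):=\alpha$.

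For arbitrary $x,y\in\R^d$ I translate so that $x=0$, and set $e=y/|y|$, $n=\lfloor R|y|\rfloor$. The $1$-Lipschitz bound gives $|\dist(\pi_1(0),\pi_1(Ry))-\dist(\pi_1(0),\pi_1(ne))|\le 1$, and combining with the one-direction convergence along $e$ (together with $n/R\to |y|$) yields pointwise a.s.\ convergence at each $(x,y)$. To upgrade this to the simultaneous $\forall x,y$ statement with uniform convergence on a compact $K\subset\R^d$, I fix a countable dense subset of $K\times K$ on which pointwise a.s.\ convergence holds on a single full-probability event, and extend by an $\varepsilon$-net argument using the uniform $1$-Lipschitz continuity of the family in each argument. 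Uniform $L^1$-convergence on $K$ then follows from the a.s.\ one by dominated convergence, as each integrand is bounded by $2\,\diam_\euc(K)$. The main technical point is item (iv)---the ergodicity of the translation action on $\mathbf{P}_u$---needed to make Kingman's limit deterministic and equal along all directions; I would invoke this as a classical property of the homogeneous Poisson point process, with the remainder of the argument being routine bookkeeping built on the $1$-Lipschitz control and the invariances of the model.
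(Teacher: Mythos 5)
Your proposal is correct and follows essentially the same route as the paper: the corollary is immediate once one has the uniform-on-compacts a.s.\ convergence $\dist_R(\pi_R(x),\pi_R(y))\to\eta(u)|x-y|$ (Theorem \ref{thm:distances_in_M_R}), which the paper also establishes via Kingman/Liggett's subadditive ergodic theorem along a fixed direction, translation and rotation invariance of $\prob_u$, the $1$-Lipschitz bound $\dist\le|\cdot|$, and an $\e$-net argument on the sphere. The only cosmetic difference is that you identify the directional limits directly by rotation invariance, whereas the paper first records homogeneity, subadditivity and $SO(d)$-invariance of the limit functional $\rho_u$ before concluding $\rho_u(y)=\eta(u)|y|$.
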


Note that this convergence is with respect to the induced distances $\dist_R$ on $K_R$ and not with respect to the intrinsic metric $\dist_R^K$ on $K_R$, defined in \eqref{eq:defn_of_dist_R^D}. Proving the convergence of the intrinsic metric is more involved and requires more assumptions on $K$. This is done in \secref{sec:convergene_uniform_distribution}.


\subsection{Proof of Theorem \ref{thm:distances_in_M_R}}

We start by reformulating \thmref{thm:distances_in_M_R} using the relation
between $\dist$ and $\dist_R$ and the definition of $\pi_R$:

\begin{theorem}[Rephrasing of Theorem \ref{thm:distances_in_M_R}]\label{thm:distance_in_the_plane}
For every $u\in[0,u_*)$ there exists $\eta(u)\in[0,1]$ such that 
\beq \label{eq:distance_in_the_plane}
	\prob_u\brk{\lim_{R\to\infty} \frac{\dist(Rx,Ry)}{R} = \eta(u)\cdot |x-y|,\quad\forall x,y\in\R^d}=1. 
\eeq
The limits also exist in $L^1(\Omega,\prob_u)$: for every $x,y\in\R^d$,
\[
\lim_{R\to\infty} \Exp_u\Brk{\left|\frac{\dist(Rx,Ry)}{R} - \eta(u)\cdot |x - y|\right|} = 0.
\] 
Furthermore, the convergence of the $\dist(Rx,Ry)/R$ is uniform \revision{over $x,y$} in every compact $K\subset\R^d$ (and in particular in $S^{d-1}$;  \revision{this particular case is used below}).
\end{theorem}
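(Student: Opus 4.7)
The plan is to follow the classical shape-theorem strategy from first passage percolation, via Kingman's subadditive ergodic theorem. The coupling $\prob_u = \prob_{u,1}$ is tailor-made for this: all $R$-dependence is absorbed into the scaling of the endpoints, and we work throughout with a single translation-invariant Poisson process of intensity $u$.

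The first step is to verify the three hypotheses required by Kingman's theorem. \emph{Subadditivity} of $\dist$ is immediate from the definition \eqref{eq:defn_dist2}: concatenating a path from $x$ to $y$ with one from $y$ to $z$ bounds $\dist(x,z)$ from above. \emph{Stationarity} and \emph{ergodicity} follow from the well-known fact that a Poisson point process with constant intensity is invariant and ergodic under the group of $\R^d$-translations. For \emph{integrability}, the trivial one-segment path ($N=1$, $x_1=x$, $y_1=y$) in \eqref{eq:defn_dist2} yields $\dist(x,y;\omega)\le |x-y|$ pointwise --- a bound that also forces the limit constant to lie in $[0,1]$.

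Fix now $v\in\R^d$ and apply Kingman's theorem to $X_{m,n}(\omega) := \dist(mv,nv;\omega)$ for $0\le m\le n$. It yields a deterministic constant $\mu(v) \in [0,|v|]$ and a $\prob_u$-full event on which $\dist(0,nv)/n \to \mu(v)$, both almost-surely and in $L^1$. The $1$-Lipschitz estimate $|\dist(0,nv)-\dist(0,nv')|\le n|v-v'|$ shows that $\mu$ is $1$-Lipschitz, so the almost-sure convergence extends from a countable dense set of $v$'s to all of $\R^d$ on a single full-measure event. The rotational invariance of the Poisson law (under the $\SO{d}$ action on $\R^d$) gives $\mu(\rho v) = \mu(v)$ for every $\rho \in \SO{d}$, so $\mu(v)$ depends only on $|v|$; combined with the homogeneity $\mu(\lambda v)=\lambda \mu(v)$ (established for rational $\lambda$ by a subsequence argument and then extended to real $\lambda$ by the Lipschitz bound), we obtain $\mu(v)=\eta(u)\,|v|$ for a single constant $\eta(u)\in[0,1]$ depending only on $u$ and $d$. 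Upgrading from integer scaling $n$ to continuous scaling $R\in(0,\infty)$, and from $(0,Rv)$ to general endpoints $(Rx,Ry)$, is immediate from translation invariance of $\prob_u$ together with the $1$-Lipschitz property. The pointwise $L^1$-claim follows from the almost-sure convergence via dominated convergence with envelope $|x-y|$.

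The main technical obstacle is upgrading pointwise convergence to uniform convergence on compact sets $K\subset\R^d$ --- the classical shape theorem. The standard argument covers $S^{d-1}$ by finitely many spherical caps of small angular radius, on each of which the target $\eta(u)\,|v|$ is nearly constant; the $1$-Lipschitz bound combined with subadditivity then sandwiches $\dist(0,Rv)/R$ between values at the caps' finitely many reference directions, whose convergence is already secured. Translation invariance of $\prob_u$ allows the same estimate to be anchored at every base point, and a covering of $K$ by small balls reduces the compact-set statement to the sphere case. The uniform $L^1$-convergence on $K$ then follows once more by dominated convergence.
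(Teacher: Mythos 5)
Your proposal is correct and follows essentially the same route as the paper: Kingman's subadditive ergodic theorem applied to $\dist(mv,nv)$ using translation invariance and ergodicity of the Poisson process, the pointwise bound $\dist(x,y)\le|x-y|$ for integrability, homogeneity plus $\SO{d}$-invariance to identify the limit as $\eta(u)|v|$, and a finite net on $S^{d-1}$ combined with the $1$-Lipschitz estimate to upgrade to simultaneous almost-sure and uniform convergence on compacts. The only cosmetic differences are that the paper invokes Liggett's version of the subadditive theorem explicitly and obtains positive homogeneity for all real scalars directly rather than via rationals.
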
	

The proof of Theorem \ref{thm:distance_in_the_plane} is separated into several parts and starts with the observation that the system $(\Omega,\calF,\prob_u)$ is ergodic with respect to translations. 

\begin{lemma}\label{lem:ergodicity}
	For $x\in\R^d$ define $\tau_x : \Omega \to \Omega$ by $\tau_x\brk{\sum_{i\geq 0} \delta_{x_i}} = \sum_{i\geq 0 } \delta_{x_i-x}$. Then, for every $x\in\R^d\setminus \{0\}$ the quartet $(\Omega,\calF,\prob_u,\tau_x)$, defines a translation invariant ergodic system. 
\end{lemma}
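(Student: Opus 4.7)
The plan is to establish mixing for the $\bbZ$-action generated by $\tau_x$, which will yield both $\prob_u$-invariance and ergodicity at once.

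Invariance is immediate from the defining properties of a Poisson point process. Since $u$ is constant throughout this section, the intensity measure $u \cdot \Leb_d$ is invariant under every translation of $\R^d$, and because the distribution of a Poisson point process is determined by its intensity measure, we obtain $(\tau_x)_\# \prob_u = \prob_u$ for every $x \in \R^d$.

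For mixing, the strategy is to verify the mixing identity on a generating algebra and then extend by approximation. Let $\calA$ denote the algebra of cylinder events of the form $\{\omega : (\omega(A_1),\dots,\omega(A_k)) \in E\}$ with $A_1,\dots,A_k \subset \R^d$ bounded Borel sets and $E \subset \bbN^k$ Borel; this algebra generates $\calF$. Given $A, B \in \calA$ depending only on bounded sets $K_A, K_B$ respectively, a direct computation from the definition of $\tau_{nx}$ shows that $\tau_{nx}^{-1}(B)$ depends only on the configuration in the shifted set $K_B + nx$. Since $x \neq 0$, for all $|n|$ sufficiently large we have $K_A \cap (K_B + nx) = \emptyset$, and the independence of a Poisson point process restricted to disjoint Borel sets, together with the $\tau_x$-invariance established above, gives
\[
\prob_u\brk{A \cap \tau_{nx}^{-1}(B)} = \prob_u(A)\,\prob_u\brk{\tau_{nx}^{-1}(B)} = \prob_u(A)\,\prob_u(B).
\]
A standard monotone-class / $\pi$-$\lambda$ argument then promotes this to arbitrary $A, B \in \calF$: approximate in the symmetric-difference pseudo-metric by $A', B' \in \calA$, use the $\tau_x$-invariance of $\prob_u$ to keep the approximation error uniform in $n$, and let the approximation error tend to zero.

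Ergodicity is then a one-line consequence of mixing: if $\tau_x^{-1}(E) = E$, the mixing property applied with $A=B=E$ gives
\[
\prob_u(E) = \prob_u\brk{E \cap \tau_{nx}^{-1}(E)} \xrightarrow[n\to\infty]{} \prob_u(E)^2,
\]
so $\prob_u(E) \in \{0,1\}$. The only real obstacle, and it is a minor one, is the approximation step extending mixing from $\calA$ to $\calF$; all the substantive content lies in two classical facts about Poisson point processes, namely the translation-invariance of the law when the intensity is translation-invariant, and the independence of the restrictions to disjoint Borel sets.
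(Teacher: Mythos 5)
Your proof is correct. The paper itself does not prove this lemma---it simply defers to Meester and Roy (\emph{Continuum Percolation}, Proposition 2.6)---and your argument (invariance of $\prob_u$ from the translation-invariance of the constant intensity measure, mixing verified on the generating algebra of cylinder events via independence of the Poisson process on disjoint bounded sets, extension to $\calF$ by a monotone-class approximation made uniform in $n$ through invariance, and ergodicity as an immediate consequence of mixing) is precisely the standard proof underlying that citation.
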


\begin{proof}
	See for example \cite{MR96} Proposition 2.6. 
\end{proof}

Next, we prove the existence of the limit in \eqref{eq:distance_in_the_plane} for $x=0$.

\begin{lemma} \label{lem:distance_in_the_plane_lemma_1}
	Let $u\in[0,u_*)$. For every $y\in \R^d$ the limit
	\beq
		\rho_u(y) = \lim_{R\to\infty}\frac{\dist(0,Ry)}{R}
	\eeq
	exists $\prob_u$-a.s.~and in $L^1(\Omega,\prob_u)$.
\end{lemma}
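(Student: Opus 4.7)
The plan is to invoke Kingman's subadditive ergodic theorem. For a fixed $y\in\R^d$ and nonnegative integers $m\le n$, define the array
\[
X_{m,n}(\omega) = \dist(my,ny;\omega).
\]
I want to check the three standard hypotheses. \emph{Subadditivity} is immediate from the triangle inequality for the (semi-)distance $\dist$: $X_{0,n}\le X_{0,m}+X_{m,n}$. \emph{Stationarity} follows from Lemma~\ref{lem:ergodicity}: since $\dist(my,ny;\omega)=\dist(0,(n-m)y;\tau_{my}\omega)$, the sequence $\{X_{m,n}\}$ is stationary under the shift $\tau_y$, which is ergodic (hence also ergodic on the diagonal subsequences needed by Kingman). \emph{Integrability} is a one-line check: by comparison with the Euclidean straight line $[0,y]$, one has $\dist(0,y;\omega)\le |y|$ almost surely, so $\Exp_u[X_{0,1}]\le |y|<\infty$; the same bound gives $\Exp_u[X_{0,n}]\le n|y|$.

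Kingman's theorem then yields the existence of a shift-invariant random variable $\rho_u(y)$ such that $X_{0,n}/n\to \rho_u(y)$ both $\prob_u$-a.s.~and in $L^1(\Omega,\prob_u)$. Because $\tau_y$ is ergodic by Lemma~\ref{lem:ergodicity}, $\rho_u(y)$ is $\prob_u$-a.s.~a deterministic constant in $[0,|y|]$. This handles the convergence of $\dist(0,Ry)/R$ along integer values of~$R$.

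To upgrade from integer to real $R$, I use the crude Lipschitz bound $\dist(x,x';\omega)\le |x-x'|$. Writing $R=n+s$ with $n=\lfloor R\rfloor$ and $s\in[0,1)$,
\[
|\dist(0,Ry;\omega)-\dist(0,ny;\omega)|\le \dist(ny,Ry;\omega)\le s|y|\le |y|,
\]
so
\[
\left|\frac{\dist(0,Ry;\omega)}{R}-\frac{\dist(0,ny;\omega)}{n}\right|
\le \frac{|y|}{R}+\frac{\dist(0,ny;\omega)}{n}\cdot\left|1-\frac{n}{R}\right|,
\]
and the right-hand side tends to $0$ almost surely (and in $L^1$, since $\dist(0,ny;\omega)/n$ is uniformly bounded by $|y|$). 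Hence $\dist(0,Ry)/R\to \rho_u(y)$ in both senses.

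The only mild subtlety is ensuring that the null set on which the subsequential limit fails can be chosen independent of $R$; this is handled by the standard observation that the a.s.~limit along integers, combined with the deterministic interpolation bound above, determines the limit along the reals on a single full-measure event. No part of the argument is genuinely hard: the mechanics of setting up the subadditive process and applying the ergodicity from Lemma~\ref{lem:ergodicity} is the main content.
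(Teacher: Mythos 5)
Your proof is correct and follows essentially the same route as the paper: apply Kingman's (Liggett's) subadditive ergodic theorem to $Y_{m,n}=\dist(my,ny)$, using the triangle inequality for subadditivity, the translation invariance and ergodicity from Lemma~\ref{lem:ergodicity} for the stationarity/ergodicity hypotheses, the bound $\dist(0,y)\le|y|$ for integrability, and the Lipschitz bound $\dist(x,x')\le|x-x'|$ to interpolate from integer to real $R$. The only cosmetic differences are that the paper treats $u=0$ (and $y=0$) separately, and that the ergodicity of the $k$-step shift should be justified by applying Lemma~\ref{lem:ergodicity} to $\tau_{ky}$ rather than deduced from ergodicity of $\tau_y$ alone---a negligible point that the paper glosses over in the same way.
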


\begin{proof}
If $u=0$, then $\omega(\R^d)=0$ $\prob_u$-a.s., i.e., $\dist(x,y)=|x-y|$ with $\prob_u$-probability one, which implies that
\[
\lim_{R\to\infty} \frac{\dist(0,Ry)}{R} = |y|,
\qquad \text{$\prob_u$-a.s.}
\]

Thus the result holds with $\rho_0(y)=|y|$. 

We turn to the case $u>0$. For $y=0$ the statement is trivial, so fix $y\in\R^d\backslash \{0\}$ and define for $0\leq m< n$
\beq
	Y_{m,n} = \dist(my,ny).
\eeq
The triangle inequality for $\dist$ implies that 
\beq\label{eq:subadditivity_of_Y_m,n}
	Y_{0,n} \leq Y_{0,m} + Y_{m,n},\quad \forall 0\leq m< n.
\eeq
Thus we are in a good position to use Kingman's subadditive ergodic theorem \cite{Ki73}. More specifically we will exploit Liggett's version \cite{Li85}, which states that if $(X_{m,n})_{0\leq m<n}$ are nonnegative random variables such that 
\begin{enumerate}[label={(\arabic*)}]
  \setlength\itemsep{0em}
	\item $X_{0,n} \leq X_{0,m}+X_{m,n}$ for all $0<m<n$,
	\item $\{X_{nk,(n+1)k} ~:~n\geq 1\}$ is stationary and ergodic for each $k\geq 1$,
	\item the law of $\{X_{m,m+k}~:~k\geq 1\}$ is independent of $m\geq 1$, and
	\item \revision{$X_{0,1}$ has finite expectation,} $E[X_{0,1}]<\infty$, 
\end{enumerate} 
then the limit $\lim_{n\to\infty}X_{0,n}/n$ exists almost surely and in $L^1$ and \revision{almost surely} equals 
\[
	\inf_{n>0}\frac{E[X_{0,n}]}{n}=\lim_{n\to\infty}\frac{E[X_{0,n}]}{n}<\infty.
\]

Taking $X_{m,n}=Y_{m,n}$,  $(1)$ is given by \eqref{eq:subadditivity_of_Y_m,n}. For $(2)$ and $(3)$ note that 
\beq
	\dist(u+z,v+z;\omega) = \dist(u,v;\tau_z\omega),\quad \forall u,v,z\in \R^d,
\eeq
and therefore $Y_{m,n} = Y_{0,n-m}\circ \tau_y^m$. Condition $(2)$ and $(3)$ then hold by the translation invariance and the ergodicity of the law $\prob_u$ under the shift $\tau_y$, see Lemma \ref{lem:ergodicity}. As for $(4)$, it follows immediately from the fact that $Y_{0,1}=\dist(0,y)\leq |y|$.

Thus, the family $\{Y_{m,n}\}_{0\leq m<n}$ satisfies conditions (1)--(4) and
\[
\lim_{n\to\infty} n^{-1}\dist(0,ny),
\] 
which we denote by $\rho_u(y)$, \revision{converges} $\prob_u$-a.s.~and in $L^1$. We then prove that 
\beq 
	\rho_u(y)=\lim_{R\to\infty}\frac{\dist(0,Ry)}{R},\quad \text{$\prob_u$-a.s}.
\eeq 
Indeed, $\rho_u(y)=\lim_{R\to\infty} \frac{\dist(0,\lfloor R\rfloor y)}{\lfloor R\rfloor} $  and since $\dist(\lfloor R\rfloor y, Ry) \leq |\lfloor R\rfloor y - Ry| \leq |y|$, it follows that 
\[
	\left|\frac{\dist(0,Ry)}{R} - \frac{\dist(0,\lfloor R\rfloor y)}{\lfloor R\rfloor}\right| \leq \frac{2|y|}{R}.
\]
This completes the proof.
\end{proof}

\begin{lemma}\label{lem:distance_in_the_plane_lemma_2}
	The function $\rho_u$ in Lemma \ref{lem:distance_in_the_plane_lemma_1} satisfies
	\beq
		\rho_u(\alpha y) = \alpha \rho_u (y), \quad \forall \alpha \in (0,\infty), y\in \R^d,
	\eeq
	and
	\beq
		\rho_u(y+z) \leq \rho_u(y)+\rho_u(z),\quad \forall y,z\in\R^d.
	\eeq
\end{lemma}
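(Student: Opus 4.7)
The plan is to combine the translation invariance of $\prob_u$ from Lemma~\ref{lem:ergodicity} with the triangle inequality for $\dist$, and to pass to the limit using the a.s.~and $L^1$ convergences already established in Lemma~\ref{lem:distance_in_the_plane_lemma_1}. An initial observation worth spelling out is that, since the Kingman--Liggett limit used in the proof of Lemma~\ref{lem:distance_in_the_plane_lemma_1} is the deterministic constant $\inf_{n>0} E[Y_{0,n}]/n$, each $\rho_u(y)$ is a deterministic function of $y$, so the two claimed identities are purely deterministic.

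For positive homogeneity I fix $\alpha>0$ and $y\in\R^d$, and restrict to the intersection of the $\prob_u$-probability-one events on which both $\dist(0,R'y)/R'\to\rho_u(y)$ and $\dist(0,R(\alpha y))/R\to\rho_u(\alpha y)$. Substituting $R'=\alpha R$ in the former gives
\[
\frac{\dist(0,\alpha Ry)}{R}=\alpha\cdot \frac{\dist(0,R'y)}{R'}\,\longrightarrow\, \alpha\rho_u(y)
\qquad\text{as }\quad R\to\infty,
\]
and comparing with the limit $\rho_u(\alpha y)$ of the same expression yields $\rho_u(\alpha y)=\alpha\rho_u(y)$.

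For subadditivity the starting point is the triangle inequality
\[
\frac{\dist(0,R(y+z);\omega)}{R}\leq \frac{\dist(0,Ry;\omega)}{R}+\frac{\dist(Ry,R(y+z);\omega)}{R}.
\]
Translating the origin as in the proof of Lemma~\ref{lem:distance_in_the_plane_lemma_1} gives the identity $\dist(Ry,R(y+z);\omega)=\dist(0,Rz;\tau_{Ry}\omega)$, and since $\prob_u\circ\tau_{Ry}^{-1}=\prob_u$ by Lemma~\ref{lem:ergodicity}, the middle term has the same distribution under $\prob_u$ as $\dist(0,Rz)/R$. Taking $\Exp_u$ of both sides therefore gives
\[
\Exp_u\Brk{\frac{\dist(0,R(y+z))}{R}}\leq \Exp_u\Brk{\frac{\dist(0,Ry)}{R}}+\Exp_u\Brk{\frac{\dist(0,Rz)}{R}},
\]
and the $L^1$-convergence part of Lemma~\ref{lem:distance_in_the_plane_lemma_1} lets me pass to the limit in each of the three terms to conclude $\rho_u(y+z)\leq \rho_u(y)+\rho_u(z)$.

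The only subtle point is that the shift $\tau_{Ry}$ appearing in the middle term of the triangle inequality itself depends on the parameter $R$ being sent to infinity, so the a.s.~statement of Lemma~\ref{lem:distance_in_the_plane_lemma_1} cannot be applied to $\dist(0,Rz;\tau_{Ry}\omega)/R$ pointwise in $\omega$. This is precisely why I pass through expectations and the $L^1$ convergence, which exploits only the invariance of the law $\prob_u$ and turns the subadditivity claim into a statement about deterministic constants.
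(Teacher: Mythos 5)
Your proof is correct and follows essentially the same route as the paper's: positive homogeneity by rescaling the parameter inside the almost-sure limit, and subadditivity by combining the triangle inequality with translation invariance of $\prob_u$ and passing to the limit in expectation via the $L^1$ convergence. Your closing remark correctly identifies why the argument must go through expectations rather than the pointwise a.s.\ limit (the shift $\tau_{Ry}$ varies with $R$), a point the paper leaves implicit.
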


\begin{proof}
The positive homogeneity follows from the existence of the limit since 
\[
	\rho_u(\alpha y) = \lim_{R\to\infty}\frac{\dist(0,R\alpha y)}{R} = 	\lim_{R\to\infty}\alpha \frac{\dist(0,R\alpha y)}{R\alpha} = \alpha \rho_u(y). 
\]
For the triangle inequality note that by the translation invariance of $\prob_u$
\beq
\begin{aligned}
		\Exp_u[\dist(0,R(y+z))] &\leq \Exp_u[\dist(0,Ry)]+\Exp_u[\dist(Ry,R(y+z))]\\
		& =  \Exp_u[\dist(0,Ry)] + \Exp_u[\dist(0,Rz)].
\end{aligned}
\eeq
Diving both sides by $R$, taking the limit $R\to\infty$ and using the $L^1$ convergence of $\dist(0,Ry)/R$ gives the required inequality. 
\end{proof}

\begin{lemma}\label{lem:distance_in_the_plane_lemma_3}
	The function $\rho_u$ in Lemma \ref{lem:distance_in_the_plane_lemma_1} is invariant under the action of $SO(d)$.  
\end{lemma}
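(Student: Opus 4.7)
The plan is to exploit the rotational symmetry of the Poisson point process together with the equivariance of the distance function under rotations. Concretely, for any $A\in SO(d)$, I will show that the random variables $\dist(0,Ry;\omega)$ and $\dist(0,RAy;\omega)$ have the same distribution under $\prob_u$, and then invoke the $L^1$ convergence established in \lemref{lem:distance_in_the_plane_lemma_1} to transfer this equality in distribution to the deterministic limits.

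First I would record two elementary symmetry facts. Define the rotation action on point measures by $A\omega=\sum_i\delta_{Ax_i}$ for $\omega=\sum_i\delta_{x_i}$. Since the intensity $u\cdot\Leb_d$ is rotationally invariant, the push-forward $(A)_\#\prob_u$ equals $\prob_u$ for every $A\in SO(d)$. Next, because the defect set $\calS(\omega)=\bigcup_i\overline{B(x_i,1)}$ transforms equivariantly under $A$, i.e.\ $\calS(A\omega)=A\,\calS(\omega)$, and because $A$ is a Euclidean isometry preserving lengths of paths, the distance defined in \eqref{eq:defn_dist2} satisfies
\[
\dist(Ax,Ay;A\omega)=\dist(x,y;\omega)\qquad\forall x,y\in\R^d,\ \omega\in\Omega.
\]

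Applying this identity with $x=0$ and $y$ replaced by $Ry$, and setting $\omega'=A^{-1}\omega$, gives
\[
\dist(0,RAy;\omega)=\dist(0,Ry;A^{-1}\omega).
\]
Taking expectations and using the rotation invariance of $\prob_u$, I obtain
\[
\Exp_u[\dist(0,RAy)]=\Exp_u[\dist(0,Ry)]\qquad\forall R>0.
\]
Dividing by $R$ and letting $R\to\infty$, the $L^1(\Omega,\prob_u)$ convergence from \lemref{lem:distance_in_the_plane_lemma_1} gives $\Exp_u[\dist(0,Ry)]/R\to\rho_u(y)$ and $\Exp_u[\dist(0,RAy)]/R\to\rho_u(Ay)$, so $\rho_u(Ay)=\rho_u(y)$ for every $A\in SO(d)$ and $y\in\R^d$.

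There is no real obstacle here; the only thing to handle with care is the equivariance $\calS(A\omega)=A\,\calS(\omega)$, which is precisely where the isotropy of the defects (the fact that defects are Euclidean balls, preserved by $SO(d)$) enters. Had the defects not been rotationally symmetric (as anticipated in the open questions), this step would fail and one would expect a genuinely anisotropic limit, consistent with the discussion in \secref{sec:overview}.
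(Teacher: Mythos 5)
Your proposal is correct and follows essentially the same route as the paper: establish that $\dist(0,Ry)$ and $\dist(0,RAy)$ are equal in distribution via the rotational invariance of $\prob_u$ and the equivariance of $\dist$ under rotations, then transfer this to the limit $\rho_u$ using the $L^1$ convergence from \lemref{lem:distance_in_the_plane_lemma_1}. You merely spell out in more detail the equivariance step that the paper leaves implicit.
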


\begin{proof}
Since $\prob_u$ is invariant under the action of $SO(d)$ it follows that for every two points $y_1,y_2\in \R^d$ with the property that there exists $\calR\in SO(d)$ such that $y_2= \calR y_1$, 
\[
\dist(0,y_2) \overset{d}{=} \dist(0,y_1).
\]	
Thus, $\rho_u$ inherits the symmetries of $SO(d)$ as the $L^1(\Omega,\prob_u)$ limit Lemma \ref{lem:distance_in_the_plane_lemma_1}.
\end{proof}

\begin{corollary}
For $u\in [0,u_*)$,
the function $\rho_u$ in Lemma \ref{lem:distance_in_the_plane_lemma_1} is of the form
\beq \label{eq:defn_of_eta_u}
	\rho_u(y) = \eta(u)\cdot |y|
\eeq
for some $\eta(u) \in [0,1]$.
\end{corollary}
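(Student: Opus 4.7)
The plan is to combine the three properties of $\rho_u$ established in Lemmas~\ref{lem:distance_in_the_plane_lemma_1}--\ref{lem:distance_in_the_plane_lemma_3}---positive homogeneity, subadditivity, and $SO(d)$-invariance---with the hypothesis $d\geq 2$, to conclude that $\rho_u$ is a nonnegative scalar multiple of the Euclidean norm bounded above by $1$.

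First, I would invoke the transitivity of the $SO(d)$-action on each Euclidean sphere (valid precisely because $d\geq 2$), so that Lemma~\ref{lem:distance_in_the_plane_lemma_3} forces $\rho_u(y)$ to depend only on $|y|$. Setting $\eta(u) := \rho_u(e_1)$ for any fixed unit vector $e_1\in S^{d-1}$, the positive homogeneity from Lemma~\ref{lem:distance_in_the_plane_lemma_2} upgrades this to $\rho_u(y) = |y|\cdot\rho_u(y/|y|) = \eta(u)\cdot|y|$ for every $y\neq 0$, while the case $y=0$ is trivial. Note that subadditivity is not actually needed for this reduction, although it is of course consistent with the final form.

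It then remains only to check that $\eta(u)\in[0,1]$. The lower bound $\eta(u)\geq 0$ is immediate from $\dist\geq 0$. For the upper bound, I would observe that the trivial one-step path with $N=1$, $x_1=0$, $y_1=Ry$ is always admissible in the infimum defining $\dist(0,Ry)$ in~\eqref{eq:defn_dist2}, since $x\overset{1}{\sim}x$ for every $x\in\R^d$; this yields $\dist(0,Ry)\leq R|y|$ almost surely, and dividing by $R$ and passing to the limit using Lemma~\ref{lem:distance_in_the_plane_lemma_1} gives $\eta(u)\leq 1$.

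No substantive obstacle is anticipated: the corollary is a direct reorganization of the three preceding lemmas, and the only external input is the transitivity of $SO(d)$ on spheres for $d\geq 2$, together with the trivial upper bound on $\dist$ by the Euclidean distance.
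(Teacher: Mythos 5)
Your proposal is correct and matches the paper's argument: the paper's proof is a one-line citation of the homogeneity, subadditivity and isotropy lemmas, and you have simply filled in the standard details (transitivity of $SO(d)$ on spheres for $d\geq 2$, plus the trivial bound $\dist(0,Ry)\leq R|y|$ from the one-step path, which is also how the paper obtains $\eta(u)\le 1$ via $Y_{0,1}\le |y|$). Your observation that subadditivity is not strictly needed for this reduction is accurate but does not change the substance.
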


\begin{proof}
This follows from the positive homogeneity,  the sub-additivity and the isotropy of $\rho_u$ proved in 
Lemmas  \ref{lem:distance_in_the_plane_lemma_2} and \ref{lem:distance_in_the_plane_lemma_3}.
\end{proof}

Next, we consider general $x,y\in\R^d$:
 
\begin{lemma}\label{lem:dist_result_general_x_y}
Fix $u\in[0,u_*)$. Then, for every $x,y\in\R^d$ 
\[
\lim_{R\to\infty}\frac{\dist(Rx,Ry)}{R}=\eta(u)\cdot |x-y|
\]
\revision{converges} $\prob_u$-a.s.~and in $L^1$. 
\end{lemma}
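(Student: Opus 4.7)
The plan has two parts: first, deduce $L^1$ convergence from the translation-invariance of $\prob_u$ combined with Lemma \ref{lem:distance_in_the_plane_lemma_1}; second, upgrade to $\prob_u$-a.s.~convergence via a second application of Kingman's sub-additive ergodic theorem.

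Set $z := y - x$; if $z = 0$ the claim is trivial, so assume $z \ne 0$. The $L^1$ convergence is essentially immediate: from the pointwise identity $\dist(Rx,Ry;\omega) = \dist(0,Rz;\tau_{Rx}\omega)$ and the $\tau_{Rx}$-invariance of $\prob_u$, the random variables $\dist(Rx,Ry)/R$ and $\dist(0,Rz)/R$ share the same law under $\prob_u$. Hence
\[
\Exp_u\!\Brk{\Bigl|\tfrac{1}{R}\dist(Rx,Ry) - \eta(u)|z|\Bigr|} = \Exp_u\!\Brk{\Bigl|\tfrac{1}{R}\dist(0,Rz) - \eta(u)|z|\Bigr|} \xrightarrow[R\to\infty]{} 0
\]
by Lemma \ref{lem:distance_in_the_plane_lemma_1} applied in the direction $z$ together with the subsequent corollary that identifies $\rho_u(z) = \eta(u)|z|$.

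For $\prob_u$-a.s.~convergence, apply Kingman's sub-additive ergodic theorem to the family $\widetilde Y_{m,n} := \dist(mz, nz)$, $0 \le m < n$, exactly as in the proof of Lemma \ref{lem:distance_in_the_plane_lemma_1} but along the direction $z$; this yields $\dist(0, nz)/n \to \eta(u)|z|$ a.s.~along integers $n$, and the $1$-Lipschitz bound $|\dist(a,b) - \dist(a',b')| \le |a-a'| + |b-b'|$ (with $n = \lfloor R \rfloor$) extends the conclusion to the continuous parameter $R$. The remaining task is to transport this a.s.~statement from $\dist(0,Rz)/R$ to $\dist(Rx,Ry)/R$, which I expect to carry out by coupling through the identity $\dist(Rx,Ry;\omega) = \dist(0,Rz;\tau_{Rx}\omega)$ together with the ergodicity of $\tau_z$ acting on $(\Omega,\calF,\prob_u)$.

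The main obstacle is precisely this last transport step: translation-invariance yields equality in law of $\dist(Rx,Ry)/R$ and $\dist(0,Rz)/R$ for each fixed $R$, but it does not automatically transfer a.s.~convergence across the $R$-dependent shift $\tau_{Rx}$. I expect this gap to be closed either by exploiting the full strength of the ergodic theorem along a suitably chosen diagonal subsequence, or---if necessary---by appealing to the concentration estimates developed in Section \ref{sec:large_deviations} through a Borel--Cantelli argument.
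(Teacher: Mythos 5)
Your $L^1$ argument is correct and coincides with the paper's: the identity $\dist(Rx,Ry;\omega)=\dist(0,R(y-x);\tau_{Rx}\omega)$ together with the translation invariance of $\prob_u$ gives equality in law for each fixed $R$, and Lemma~\ref{lem:distance_in_the_plane_lemma_1} together with the corollary identifying $\rho_u(z)=\eta(u)|z|$ then yields $L^1$ convergence. (The second application of Kingman's theorem to $\widetilde Y_{m,n}=\dist(mz,nz)$ is redundant, since Lemma~\ref{lem:distance_in_the_plane_lemma_1} already covers every direction $z$, but it is harmless.)

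The almost-sure part is where your proposal stops, and the transport step you flag is a genuine issue: equality in law for each fixed $R$ gives convergence in probability of $\dist(Rx,Ry)/R$ to the constant $\eta(u)|z|$, but not almost-sure convergence, precisely because the measure-preserving map $\tau_{Rx}$ varies with $R$. For comparison, the paper's own proof writes the identity with the $R$-independent shift $\tau_x$ and then reads off the a.s.\ statement by pulling the full-measure event of Lemma~\ref{lem:distance_in_the_plane_lemma_1} back through that single fixed map; with the correct shift $\tau_{Rx}$ that one-line deduction does not go through, so you have located a soft spot in the argument rather than merely a detail you failed to fill in. Be warned, however, that your proposed fallback --- Borel--Cantelli via the estimates of Section~\ref{sec:large_deviations} --- is circular as stated: the lower bound in Theorem~\ref{thm:Chemical_distance} rests on Lemma~\ref{lem:point_to_hyperplane_lemma_1}, whose proof invokes the uniform convergence of Theorem~\ref{thm:distance_in_the_plane}, which in turn cites the present lemma. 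A non-circular completion would require, for instance, concentration of $\dist(Rx,Ry)$ around its \emph{mean} $\Exp_u[\dist(0,Rz)]$ (provable by martingale-difference or BK-type methods without prior knowledge of the limit constant), summable along integer $R$, followed by the Lipschitz interpolation you already describe; the ``diagonal subsequence'' idea by itself does not obviously produce such an argument.
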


\begin{proof}
For every $x,y\in\R^d$ and $\omega\in\Omega$ the relation $\dist(x,y;\omega) = \dist(0,y-x,\tau_x\omega)$ holds and therefore 
\[
\frac{\dist(Rx,Ry;\omega)}{R} = \frac{\dist(0,R(y-x);\tau_x\omega)}{R},
\]
which implies that the limit $R\to\infty$ exists $\prob_u$-a.s., is in $L^1(\Omega,\prob_u)$ and equals $\eta(u)\cdot |x-y|$. 
\end{proof}

\begin{proof1}{Proof of Theorem \ref{thm:distance_in_the_plane}}
By Lemmas \ref{lem:distance_in_the_plane_lemma_1}--\ref{lem:dist_result_general_x_y},
\revision{$\dist(Rx,Ry)/R$ converges as
$R\to\infty$} both $\prob_u$-a.s.~and in $L^1$ for every fixed pair of points $x,y\in\R^d$. It remains to verify that the limit exists $\prob_u$-a.s.~simultaneously for all pairs of points $x,y\in\R^d$. 

To this end, let $\varepsilon \in (0,2\pi)$ and let $(v_i)_{i=1}^N$ with $N=\lceil  c(d)/\varepsilon^d \rceil$ be a set of points on the unit sphere $S^{d-1}$ that form an $\varepsilon/2$-net for $S^{d-1}$ ($c(d)$ is a constant that depends only on $d$). 
Since the set $(v_i)_{i=1}^N$ is finite, it follows from \lemref{lem:dist_result_general_x_y} that
\begin{equation}\label{eq:proof_of_dist_result_1}
\prob_u\brk{\lim_{R\to\infty}\frac{\dist(Rv_i,Rv_j)}{R}=\eta(u)\cdot |v_i-v_j|,\quad \forall 1\leq i,j\leq N}=1.
\end{equation}
Given $x,y\in\R^d\backslash\{0\}$, there exist $1\leq i,j\leq N$ such that 
\begin{equation}
	\left|\hat{x}-v_i\right|\leq \varepsilon\text{ and } 	\left|\hat{y}-v_j\right|\leq \varepsilon,
\end{equation}
where $\hat{x}=x/|x|$ and $\hat{y}=y/|y|$. By the triangle inequality, 
\begin{equation}
\begin{aligned}
	 \left|\frac{\dist(Rx,Ry)}{R} - \frac{\dist(R|x| v_i,R|y| v_j)}{R}\right| 
	\leq &\frac{\dist(Rx,R|x|v_i)}{R} + 	\frac{\dist(Ry,R|y|v_i)}{R}\\
	\leq & \frac{|Rx-R|x|v_i|}{R} + 	\frac{|Ry-R|y|v_i|}{R} \\ 
	\leq & \varepsilon(|x|+|y|),
\end{aligned}
\end{equation}
and  
\begin{equation}
	\left|\eta(u)\cdot |x-y| - \eta(u)\cdot ||x|v_i-|y|v_j|\right| \leq \varepsilon(|x|+|y|).
\end{equation}
Combining both estimates, it follows from \eqref{eq:proof_of_dist_result_1}  that
\[
	\prob_u\brk{\limsup_{R\to\infty}\left|\frac{\dist(Rx,Ry)}{R}-\eta(u)\cdot |x-y|\right|\leq 2\varepsilon(|x|+|y|), \quad \forall x,y\in\R^d\setminus\{0\}}=1.
\]
The cases $x=0$, $y=0$ can be included by a similar argument, except that no approximation for $0$ is needed. 
Since this holds for every $\varepsilon>0$, the limit as $R\to\infty$ exists $\prob_u$-a.s. 

To justify the uniformity over compact sets, note that there are only finitely many $v_i$'s for a fixed $\varepsilon>0$ and that for compact sets we have a uniform bound on the Euclidean norm of both $x$ and $y$. The same applies for the $L^1(\Omega,\prob_u)$ convergence. 
\end{proof1}

\section{Large deviation results}\label{sec:large_deviations}

In this section we prove large deviation results for the distance in $\M_R$. As an immediate corollary we obtain that $\eta(u) >0$ for every $u\in [0,u_*)$. 

The main result of this section is the following.

\begin{theorem}\label{thm:Chemical_distance}
For every $u\in [0,u_*)$ and every $\varepsilon>0$ there exists a positive constant $c_1$, depending only on $d$, $u$ and $\varepsilon$, such that for large enough $R$,
\begin{equation}\label{eq:LDP}
	\prob_u\brk{\exists x\in\R^d~:~|x|=R,~|\dist(0,x)-\eta(u)R|> \varepsilon R}< e^{-c_1 R}.
\end{equation}
\end{theorem}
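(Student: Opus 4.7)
The plan is to establish the one-sided concentration bounds separately and combine them via a polynomial-size net on $R\cdot S^{d-1}$. As a first step, I would reduce the ``for all $x$ with $|x|=R$'' statement to a fixed $x$: cover $R\cdot S^{d-1}$ by an $(\e R/4)$-net of cardinality $O(R^{d-1})$, and note that the unconfined distance $\dist(0,\cdot)$ is $1$-Lipschitz with respect to the Euclidean distance, so the event in \eqref{eq:LDP} forces the ``bad'' event to hold at some net point with the threshold $\e R/2$. A union bound will cost only a polynomial factor, which is absorbed into the exponential, so it suffices to prove, for each fixed $x$ with $|x|=R$,
\[
\prob_u\bigl(|\dist(0,x)-\eta(u)R|>\tfrac{\e}{2}R\bigr) < e^{-cR}
\]
with $c$ depending only on $d$, $u$, $\e$.

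For the upper tail, $\prob_u(\dist(0,x)>(\eta(u)+\e/2)R)$, I would use a coarse-graining argument based on subadditivity. By the $L^1$ convergence in \thmref{thm:distance_in_the_plane}, one can fix a scale $k=k(\e,u)$ such that $\Exp_u[\dist(0,ke)]\le k(\eta(u)+\e/8)$ for any unit vector $e$ (by isotropy, the distribution does not depend on $e$). Partitioning $[0,x]$ into $m\asymp R/k$ consecutive sub-segments of length $k$ and applying the triangle inequality gives $\dist(0,x)\le \sum_{i=0}^{m-1}\dist(y_i,y_{i+1})+O(k)$. To turn this into a sum of independent random variables, replace each $\dist(y_i,y_{i+1})$ by the ``tube-confined'' distance $\widetilde{\dist}_i$, in which the infimum in \eqref{eq:defn_dist2} is restricted to chains using only Poisson points inside a fixed-radius cylinder $T_i$ around $[y_i,y_{i+1}]$. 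Choosing the tube radius large (but fixed in $R$) keeps $\Exp_u[\widetilde{\dist}_i]\le k(\eta(u)+\e/4)$ by monotone convergence in the tube radius, while restricting to even (or otherwise well-spaced) $i$ yields disjoint tubes and hence genuine independence of the $\widetilde{\dist}_i$ by the Poisson independence on disjoint regions. Since $0\le \widetilde{\dist}_i\le k$ a.s., a Hoeffding-type inequality delivers the desired exponential bound.

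For the lower tail, $\prob_u(\dist(0,x)<(\eta(u)-\e/2)R)$, I would invoke the BK inequality as suggested. The event $\{\dist(0,x)\le t\}$ is \emph{increasing} in $\omega$ (adding defects only identifies more points), which is the correct monotonicity for BK. Choose intermediate points $y_i=(i/m)x$ as above. The heuristic is that a chain realizing $\dist(0,x)\le(\eta(u)-\e/2)R$ gives rise, by a projection-and-pigeonhole argument onto the hyperplanes normal to $x$ through the $y_i$, to a family of shorter sub-chains near each consecutive pair $(y_i,y_{i+1})$ whose Poisson witnesses can be chosen disjoint across $i$. The BK inequality then bounds the probability of the ``disjoint-occurrence'' event by the product
\[
\prob_u\bigl(\dist(0,x)<(\eta(u)-\tfrac{\e}{2})R\bigr) \le \prod_{i=0}^{m-1} \prob_u\bigl(\widetilde{\dist}(y_i,y_{i+1})\le(\eta(u)-\tfrac{\e}{4})k\bigr),
\]
up to combinatorial corrections. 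Since $\widetilde{\dist}(0,ke)/k\to \eta(u)$ in $L^1$ as $k\to\infty$, each factor can be made strictly less than $1-\delta(\e,u)$ by fixing $k$ large enough, yielding a product bound of the form $(1-\delta)^m \le e^{-cR}$.

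The main obstacle will be the lower-tail step: promoting the heuristic ``short chain $\Rightarrow$ many disjoint short sub-chains'' into a form to which BK applies requires a careful definition of the witnesses of a chain (each chain uses finitely many Poisson points, so one can formalise disjoint occurrence as the existence of a partition of these witness sets) and a control on the combinatorial overhead of the pigeonhole / projection step. One must also verify that the tube-confined factors in the BK product are uniformly bounded away from $1$, which uses \thmref{thm:distance_in_the_plane} applied to $\widetilde{\dist}$ together with the fact that the tube confinement does not alter the $L^1$-limit. The net reduction, the triangle-inequality subadditivity, and the Hoeffding step are standard and cause no serious difficulty.
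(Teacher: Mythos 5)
Your net reduction and your upper-tail argument are sound and essentially the route the paper takes: the paper also reduces to $x=Re_1$ via rotation invariance, an $\e/2$-net on $S^{d-1}$ and the $1$-Lipschitz property of $\dist$, and it also proves the upper tail by summing i.i.d.\ confined crossing lengths (it confines to disjoint slabs $H_{iR_0}<\gamma<H_{(i+1)R_0}$ rather than to tubes, which makes the independence and the a.s.\ bound $p_{0,R_0}\le R_0$ immediate; its Lemma~\ref{lem:LDP_upper_bound_lemma} plays the role of your monotone-convergence step, showing the confinement does not raise the limiting expectation above $\eta(u)$).

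The genuine gap is in the lower tail, and it is exactly the point your sketch defers to ``combinatorial corrections.'' The sub-chains you extract by projecting a near-optimal chain onto the hyperplanes through the $y_i$ are slab crossings, but they occur at an \emph{unknown and uncontrolled transverse displacement}: a $\dist$-short chain can wander arbitrarily far from $[0,x]$ in the Euclidean sense, because motion inside $\calS(\omega)$ is free. Consequently the per-slab event you would feed into BK is not ``$\widetilde{\dist}(y_i,y_{i+1})\le(\eta-\e/4)k$'' (the extracted sub-chain need not go anywhere near $y_i$ or $y_{i+1}$, so that event is not implied), but rather ``somewhere in the infinite slab there is a crossing of length $\le(\eta-\e/4)k$'' --- and that event has probability $1$, since a.s.\ the slab contains, far away, clusters of $\calS(\omega)$ of diameter larger than $k$. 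So the product bound degenerates. Fixing this is the heart of the paper's proof (Proposition~\ref{prop:LD_key_prop}): one must localize each crossing to a $(d-1)$-box $\Lambda(q)$ of side $N$ and pay an entropy cost for recording which box, which the paper controls by defining the decomposition through $\ell^\infty$-stopping times (so consecutive localization boxes are within $O(M/N)$ of each other \emph{by construction}, giving the factor $(2d(16M/N)^d)^Q$), and then beating that entropy using Lemma~\ref{lem:point_to_hyperplane_lemma_2}, which says $\prob_u(s_{M,N}\le M(\eta(u)-\e))\to 0$ as $M\to\infty$ uniformly in $N\le M$. Note in particular that your stated goal of making each BK factor ``uniformly bounded away from $1$'' is not enough: the factors must be made \emph{smaller than the reciprocal of the per-step entropy}, which is why the two scales $M\gg N$ and the quantitative Lemma~\ref{lem:point_to_hyperplane_lemma_2} are needed. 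A secondary (repairable) imprecision: the event $\{\dist(0,x)<(\eta-\e/2)R\}$ does not force \emph{every} slab crossing to be short, only their sum; the paper handles this by summing over rational length allocations $(r_q)$ inside BK and then running a Chernoff bound on $\sum_q X_q(M,N)$, rather than by a plain product $(1-\delta)^m$.
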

The proof of Theorem \ref{thm:Chemical_distance} follows the ideas developed for the discrete case by Kesten \cite{Ke84}. 

In addition, we will need a large deviation result for the existence of very long geodesics  in the Euclidean sense. In order to state it we need another definition: Geodesics in $\M_R$ can be identified with geodesics in $\R^d$ with respect to the semi-distance function $\dist$. Such geodesics are highly degenerate, as there is nothing that limits their behavior in $\calS(\omega)$. For $x,y\in\R^d$ we denote by $\Gamma_0(x,y)$ the set of geodesics between $x$ and $y$ with respect to $\dist$  that minimize the Euclidean distance inside $\calS(\omega)$. We will call such paths \emph{true geodesics} (see \figref{fig:truegeo}). 

\begin{figure}
\begin{center}
\includegraphics[height=2in]{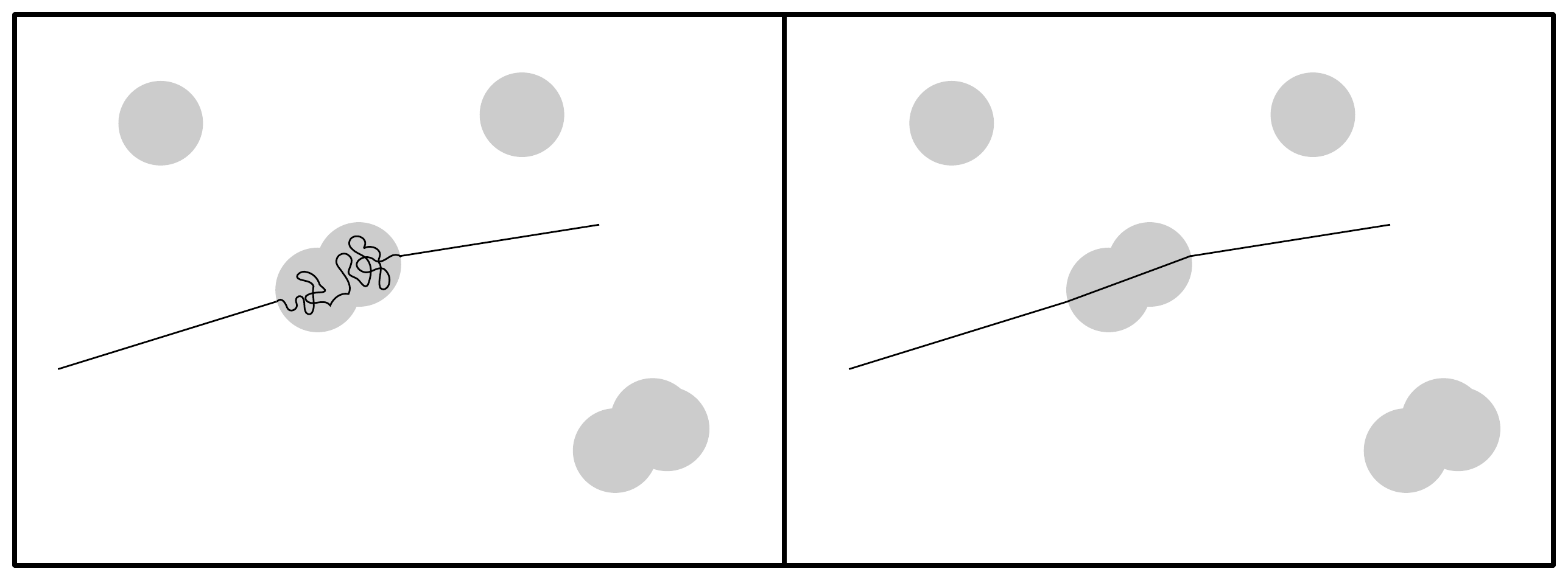}
\end{center}
\caption{The path on the left is a geodesic in $\M_R$, where the gray regions correspond to $\calS_R(\omega)$. Note that the part in $\calS_R$ has zero length. The path on the right is the corresponding true geodesic.}
\label{fig:truegeo}
\end{figure}

\begin{theorem}
\label{thm:too_long_Euclidean_distance_is_very_unlikely}

For every $u\in[0,u_*)$ there exist positive constants $C_2,c_3$ \revision{and $\alpha\gg1$}, depending only on $d$ and $u$, such that 
\begin{enumerate}[label={(\arabic*)}]
\item For every $R>0$,
\begin{equation}\label{eq:towards_eta_u_positive}
\prob_u\brk{\begin{array}{c} \exists \text{ a true geodesic path starting at } 0\\ \text{ such that } \len_\euc(\gamma)\geq R \text{ and } \len(\gamma)<\frac{1}{\alpha} \len_\euc(\gamma) \end{array}}\leq C_2e^{-c_3R}.
\end{equation}

\item 
For every $x,y\in \R^d$ and for every $R>\alpha|x-y|$
\begin{equation}
\prob_u\brk{\exists \gamma \in \Gamma_0(x,y)~:~\len_\euc(\gamma)> R }\leq C_2e^{-c_3R}.
\end{equation}
\end{enumerate}
\end{theorem}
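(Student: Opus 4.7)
The plan is to exploit subcritical cluster-size decay and a combinatorial enumeration of candidate true-geodesics, in the spirit of Kesten's arguments for first-passage percolation. The essential probabilistic input is the standard Boolean model result (cf.~\cite{MR96}): in the regime $u<u_*$, there exist constants $A,a>0$ depending only on $d$ and $u$ such that, for every $x\in\R^d$,
\[
\prob_u\brk{\diam_\euc C(x)\ge n}\le A\, e^{-a n},
\]
where $C(x)$ denotes the connected component of $\calS(\omega)$ containing $x$ (with $C(x)=\{x\}$ if $x\notin\calS$).

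To prove part (1), I would start from the following deterministic observation: if $\gamma$ is a true geodesic starting at $0$ with $\len_\euc(\gamma)\ge R$ and $\len(\gamma)<\len_\euc(\gamma)/\alpha$, then since $\len(\gamma)$ equals the Euclidean length of the portion of $\gamma$ lying in $\R^d\setminus\calS(\omega)$, the total Euclidean length of $\gamma$ inside $\calS(\omega)$ is at least $(1-1/\alpha)\len_\euc(\gamma)\ge R(1-1/\alpha)$. Decomposing this inside-$\calS$ portion into its maximal connected sub-arcs $S_1,S_2,\ldots$, each $S_j$ lies in a single connected component of $\calS$, whose Euclidean diameter bounds $\len_\euc(S_j)$. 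Summing over the distinct components $C_1,\ldots,C_k$ visited (counted with multiplicity for re-entries), I conclude that $\sum_i \diam_\euc C_i \ge R(1-1/\alpha)$.

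Next I would discretize to enable a union bound. Tile $\R^d$ by unit cubes and encode $\gamma$ by the ordered sequence of cubes it visits, together with a marking of the cubes that contain an ``entry point'' into each visited component of $\calS$. The number of such discrete skeletons of combinatorial length $N$ based at $0$ is at most $K^N$ for $K=K(d)$. For a fixed skeleton with anchor cubes marked and target diameters $d_1,\ldots,d_k$ satisfying $\sum d_i \ge R(1-1/\alpha)$, the requirement that each anchor cube actually spawns a component of diameter at least $d_i$ is a conjunction of events supported on disjoint subsets of the configuration (since distinct components occupy disjoint regions), so the BK inequality bounds its probability by $\prod_i A\,e^{-a d_i}\le A^k e^{-aR(1-1/\alpha)}$. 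Summing over compositions of $R(1-1/\alpha)$ into $k$ parts, over anchor points, and over skeletons of length $N\le C R$, the entropy contributes a factor $e^{C'R}$, which is dominated by the cluster tail once $\alpha$ is taken large enough that $a(1-1/\alpha)>C'$; this yields a bound of the form $C_2 e^{-c_3 R}$.

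For part (2), let $\gamma\in\Gamma_0(x,y)$, so that $\len(\gamma)=\dist(x,y)$. By \thmref{thm:Chemical_distance} applied to the translate $y-x$, except on an event of probability at most $C e^{-c|x-y|}$ one has $\dist(x,y)\le(\eta(u)+\e)|x-y|\le|x-y|$. On the complementary event, the hypothesis $\len_\euc(\gamma)>R>\alpha|x-y|$ gives $\len(\gamma)\le|x-y|<R/\alpha<\len_\euc(\gamma)/\alpha$, so by translation invariance of $\prob_u$ the event of part~(1) occurs at base point $x$. Combining both bounds yields (2).

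The main obstacle is the discretization step in part (1): true geodesics are arbitrary continuous paths that may, a priori, enter a given component of $\calS$ multiple times, so the combinatorial encoding must carefully account for multiplicities while still producing disjoint cluster events to which the BK inequality genuinely applies. Making the counting tight enough to be absorbed by $a(1-1/\alpha)$, and ruling out pathological re-entry scenarios, is the bookkeeping heart of the argument.
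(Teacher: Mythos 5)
Your reduction of Part (2) to Part (1) is correct and in fact cleaner than the paper's (you do not even need \thmref{thm:Chemical_distance}: $\dist(x,y)\le|x-y|$ holds deterministically, so $\len(\gamma)=\dist(x,y)\le|x-y|<R/\alpha<\len_\euc(\gamma)/\alpha$ directly places you in the event of Part (1) based at $x$). The problem is in Part (1), where your key deterministic step fails.

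You claim that each maximal sub-arc $S_j$ of $\gamma$ inside a connected component $C_i$ of $\calS$ has $\len_\euc(S_j)\le\diam_\euc(C_i)$, and hence $\sum_i\diam_\euc C_i\ge R(1-1/\alpha)$. This is false. A true geodesic, between entering and leaving a component, follows the \emph{intrinsic} Euclidean-shortest path inside that component (leaving $\calS$ would add positive $\dist$-length), and for a non-convex component this intrinsic distance can vastly exceed the Euclidean diameter: a chain of $\sim n$ unit balls folded into a spiral occupies a box of side $\sim n^{1/d}$, so a true geodesic can accrue Euclidean length $\sim n$ inside a component of diameter $\sim n^{1/d}$. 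The correct deterministic statement (this is exactly the paper's Lemma \ref{lem:num_of_crossing_of_a_finite_box_by_a_true_geodesic}, proved by a ball-packing/volume argument) bounds the excursion length by the \emph{number of Poisson points} in the component, i.e.\ $\len_\euc(S_j)\lesssim\diam_\euc(C_i)^d$. Feeding that into your scheme gives only $\sum_i\diam_\euc(C_i)^d\gtrsim R$, and when a single component dominates, the diameter tail $\prob_u(\diam_\euc C(x)\ge n)\le Ae^{-an}$ yields merely $e^{-cR^{1/d}}$ --- a stretched exponential, not the claimed $C_2e^{-c_3R}$. To salvage your route you must replace the diameter tail by the exponential tail of the cluster \emph{cardinality} (number of points of $\supp(\omega)$ in a cluster), a different input which does hold in the subcritical Boolean model but which you neither state nor use. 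Two further bookkeeping points: your skeleton count ``$N\le CR$'' is not justified because $\len_\euc(\gamma)$ may far exceed $R$ (the paper handles this by summing over shells $\len_\euc(\gamma)\in[S,S+1]$, $S\ge R$), and this is where the choice of $\alpha$ must enter. For contrast, the paper avoids your decomposition entirely: it lower-bounds the number $Q$ of box-crossings of $\gamma$ by $S/C_4(M+1)^d$ via the volume lemma, shows each crossing costs a definite $\dist$-length $\delta(M)>0$ except with probability $O(e^{-c_6M/2})$ (using the cluster-diameter tail there), and concludes by a Chernoff bound combined with the entropy estimate of Proposition \ref{prop:LD_key_prop}.
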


As a corollary we obtain:
\begin{corollary}\label{cor:eta_u_is_positive}
	$\eta(u)$ is strictly positive for every $u\in[0,u_*)$. 
\end{corollary}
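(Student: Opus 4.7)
The plan is to deduce $\eta(u)>0$ directly from Theorem~\ref{thm:too_long_Euclidean_distance_is_very_unlikely}(1), combined with the $L^1$-convergence established in Theorem~\ref{thm:distance_in_the_plane}. The basic picture is that Theorem~\ref{thm:too_long_Euclidean_distance_is_very_unlikely}(1) rules out shortcuts of Euclidean length $\ge R$ whose $\dist$-length is smaller than $\frac{1}{\alpha}$ of their Euclidean length; since any path realizing $\dist(0,x)$ with $|x|=R$ is (after replacement by its true geodesic) such a candidate, this bounds $\dist(0,x)$ from below by $R/\alpha$ with overwhelming probability.

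Concretely, fix $y\in\R^d$ with $|y|=1$ and consider $\dist(0,Ry)$ for large $R$. Any $\dist$-geodesic from $0$ to $Ry$ can be replaced by a true geodesic $\gamma\in\Gamma_0(0,Ry)$; this new path satisfies $\len(\gamma)=\dist(0,Ry)$ and $\len_\euc(\gamma)\ge |Ry|=R$. Hence on the event $\{\dist(0,Ry)<R/\alpha\}$ there exists a true geodesic starting at $0$ with $\len_\euc(\gamma)\ge R$ and $\len(\gamma)<\frac{1}{\alpha}\len_\euc(\gamma)$, so by Theorem~\ref{thm:too_long_Euclidean_distance_is_very_unlikely}(1),
\[
\prob_u\brk{\dist(0,Ry)<R/\alpha}\le C_2 e^{-c_3 R}.
\]

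Using the trivial bound $\dist(0,Ry)\ge 0$ and Markov-type reasoning,
\[
\Exp_u[\dist(0,Ry)]\ge \frac{R}{\alpha}\,\prob_u\brk{\dist(0,Ry)\ge R/\alpha}\ge \frac{R}{\alpha}\brk{1-C_2 e^{-c_3 R}}.
\]
Dividing by $R$ and letting $R\to\infty$, the $L^1(\Omega,\prob_u)$-convergence part of Theorem~\ref{thm:distance_in_the_plane} yields
\[
\eta(u)=\lim_{R\to\infty}\frac{\Exp_u[\dist(0,Ry)]}{R}\ge \frac{1}{\alpha}>0,
\]
which is the desired conclusion.

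The argument is short because Theorem~\ref{thm:too_long_Euclidean_distance_is_very_unlikely}(1) is tailored to this purpose; essentially the only thing to check carefully is the reduction from an arbitrary $\dist$-minimizer to a true geodesic in $\Gamma_0(0,Ry)$, so that one may legitimately compare $\len_\euc(\gamma)$ to $|Ry|$. Once that is in place the rest is a one-line Markov inequality combined with the already-established $L^1$-convergence.
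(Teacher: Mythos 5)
Your argument is correct and is essentially the paper's own proof: both deduce from Theorem~\ref{thm:too_long_Euclidean_distance_is_very_unlikely}(1) that, off an exponentially unlikely event, $\dist(0,Re_1)\ge R/\alpha$ (since a true geodesic to a point at Euclidean distance $R$ has $\len_\euc\ge R$), and then pass to $\eta(u)$ via the expectation and the $L^1$-convergence of $\dist(0,Re_1)/R$. The only cosmetic difference is that the paper settles for the bound $\eta(u)\ge 1/(2\alpha)$ by using $\prob_u(A_R^c)\ge 1/2$ for large $R$, while you carry the factor $(1-C_2e^{-c_3R})$ to the limit.
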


\begin{proof}
Let $A_R$ be the event in \eqref{eq:towards_eta_u_positive}. By Part (1) of Theorem \ref{thm:too_long_Euclidean_distance_is_very_unlikely},  
\[
	\prob_u(A_R) \le C_2 e^{-c_3 R},\quad \forall R>0.
\]
Since the Euclidean length of a true geodesic connecting $0$ and $Re_1$ is at least $R$, for $\omega\in A_R^c$,
\[
	\dist(0,Re_1) \ge \frac{1}{\alpha} R,
\]

Thus, for large enough $R$
\[
	\Exp_u[\dist(0,Re_1)] \geq 	\Exp_u[\dist(0,Re_1)\cdot \ind_{A^c_R}] \geq \frac{1}{\alpha} R\cdot \prob_u(A_R^c)\geq \frac{1}{2\alpha}R,
\]
hence
\[
	\eta(u) = \lim_{R\to\infty}\frac{\Exp_u[\dist(0,Re_1)]}{R} \geq \frac{1}{2\alpha}>0.
\]
\end{proof}

\revision{
\begin{comment}
A conjecture in percolation theory is that
\[
\lim_{u\nearrow u_*} \eta(u) = 0.
\]
For $d=2$, an adaptation of \cite[Lemma 11.12]{Gri99} to our setting will show that this conjecture holds. 
In higher dimension this a famous open problem.
\end{comment}
}

The proofs of Theorem \ref{thm:Chemical_distance} and Theorem \ref{thm:too_long_Euclidean_distance_is_very_unlikely} are quite technical. The main idea behind the proof is to exploit the independence structure of the Poisson point process, manifested in the BK inequality, in order to show that geodesics whose lengths deviate from the expected distance in the sense of \eqref{eq:LDP}, contain sufficiently many disjoint sub-paths (which are roughly independent) whose total length deviates significantly from its expected value. Such an event is highly unlikely due to large deviations results for independent random variables.

Since the proofs in this section are technical and since one can use the theorems as ``black boxes'' in the rest of the paper, the reader might wish to skip the rest of this section in a first reading.

\subsection{The BK inequality}

In this subsection we state the continuous version of the well-known BK inequality for product measures. For this, we need some additional definitions.

There is a natural partial ordering on $\Omega$, which we denote by $\preceq$, under which $\omega \preceq \omega'$ if and only if $\supp(\omega)\subseteq \supp(\omega')$. Using it one can define increasing and decreasing events in $\calF$. An event $A\in\calF$ is said to be increasing (respectively decreasing)  if for every $\omega \preceq \omega'$, $\omega\in A$ implies $\omega'\in A$ (i.e., $A$ is closed under increasing support). 

For any bounded Borel set $Y\subset\R^d$, define the set
\[
	\omega_Y = \supp(\omega)\cap Y,
\]
and for $\omega\in\Omega$ and $Y$ as above let
\[
	L(\omega,Y)=\left\{\omega'\in\Omega ~:~ \supp(\omega)\cap Y \subseteq \supp(\omega')\cap Y
	\right\}.
\]

In words, the event $L(\omega,Y)$ contains all configurations that inside $Y$ are larger than $\omega$. We say that an  event $A$ is an increasing event on $Y$ if $\omega\in A$ 
implies that $L(\omega,Y)\subseteq A$.

\begin{definition}
Let $A$ and $B$ be two increasing events on a bounded Borel set $Y$. Then
\[
A \circ B = \left\{\omega\in \Omega ~:~ \begin{array}{l}
	\text{ there are disjoint sets }V,W\subset \R^d \text{ such that } \\ 
	V \text{ and } W \text{ are finite unions of rational}\\ 
	\text{cubes and } L(\omega,V)\subset A, L(\omega,W)\subset B
	\end{array}
\right\},
\]
where by a rational cube we mean an open $d$-dimensional cube with rational coordinates. When $A\circ B$ occurs, we say that $A$ and $B$ occur disjointly. 
\end{definition}

\begin{example}
To \revision{illustrate} this definition, let $d=2$, let $Y=[0,1]^2$, and let $A$ be the event that there exists a path in $Y$ connecting the left boundary of $Y$ to its right boundary, whose length is less than $x$. Clearly, $A$ is an increasing event, as increasing the support of $\omega$ can only shorten paths.  For $\omega\in A$ and $V\subset Y$, it is generally not true that $L(\omega,V)\subset A$; $L(\omega,V)\subset A$ only if there exists a path in $V$, \revision{such that its length is not more than $x-\alpha$, where $\alpha$ is the sum of the Euclidean distances of the path's end points from the left and right boundaries of $Y$.} The event $A\circ A$ occurs if there exist two disjoint paths connecting the left boundary of $Y$ to its right boundary, whose length is at most $x$.
\end{example}

\begin{theorem}[BK inequality] \label{thm:BK_inequality}
	Suppose $Y$ is a bounded Borel set in $\R^d$ and $A,B$ are two increasing events on $Y$. Then for every $u>0$
\[
	\prob_u(A\circ B) \leq \prob_u(A)\prob_u(B).
\]
\end{theorem}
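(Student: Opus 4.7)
The plan is to discretize the Poisson point process on $Y$ and reduce the claim to the classical Van den Berg--Kesten inequality for product Bernoulli measures. For each $n\in\bbN$, partition $Y$ into a uniform grid $\{Q_i^{(n)}\}_{i=1}^{K_n}$ of cubes of side $1/n$ with rational corners, and set $X_i^{(n)}(\omega) = \ind_{\{\omega(Q_i^{(n)})\ge 1\}}$. Under $\prob_u$ the variables $X_i^{(n)}$ are independent, so the pushforward of $\prob_u$ by $\phi_n = (X_i^{(n)})_{i=1}^{K_n}$ is a product Bernoulli measure on $\{0,1\}^{K_n}$ with parameters $p_i^{(n)} = 1 - e^{-u\,\Leb_d(Q_i^{(n)})}$.

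Next I would construct increasing cylinder approximations $A_n$ and $B_n$ depending only on $\phi_n$, satisfying $A_n \uparrow A$ and $B_n \uparrow B$ up to $\prob_u$-null sets. The key observation is that since $A$ is increasing on $Y$, its indicator is monotone in $\supp(\omega)\cap Y$ under set inclusion; discretizing point positions to mesh resolution $1/n$ yields a coarse-grained increasing event, and since $\prob_u$-a.s.~$\supp(\omega)\cap Y$ is finite with points at positive mutual distance, this approximation converges to $A$ as $n\to\infty$.

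Applying the classical Van den Berg--Kesten inequality on the finite product space $(\{0,1\}^{K_n}, (\phi_n)_\#\prob_u)$ to the pair $(A_n, B_n)$ yields
\[
\prob_u(A_n \circ_{\mathrm{disc}} B_n) \le \prob_u(A_n)\,\prob_u(B_n),
\]
where $\circ_{\mathrm{disc}}$ denotes discrete disjoint occurrence (witnesses on disjoint index subsets). The central translation step is: if $A \circ B$ occurs with witnesses $(V, W)$ of disjoint finite unions of rational cubes, then for $n$ large enough so that the mesh of side $1/n$ resolves $V$ and $W$, the index sets $I = \{i : Q_i^{(n)} \subset V\}$ and $J = \{i : Q_i^{(n)} \subset W\}$ witness $A_n \circ_{\mathrm{disc}} B_n$. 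Hence $\prob_u(A \circ B) \le \liminf_n \prob_u(A_n \circ_{\mathrm{disc}} B_n)$. Passing to the limit, monotone convergence gives $\prob_u(A_n) \to \prob_u(A)$ and $\prob_u(B_n) \to \prob_u(B)$, yielding the theorem.

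I expect the main obstacle to be the careful construction of the cylinder approximations: ensuring they are both increasing as subsets of $\{0,1\}^{K_n}$ (so that the discrete BK inequality applies) and that they monotonically approach $A$ and $B$ in $\prob_u$-measure in a way compatible with the continuous-to-discrete translation of disjoint occurrence. A secondary subtlety is reconciling the ``finite union of rational cubes'' condition in the continuous definition of $\circ$ with the fixed mesh structure, which is handled by refining $n$ until all the rational cubes appearing in any given witness are exactly unions of mesh cubes.
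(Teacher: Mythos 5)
The paper does not actually prove this theorem---it is quoted as Theorem~2.3 of \cite{MR96}---so your attempt is measured against that reference rather than an in-paper argument. Your overall strategy (discretize, invoke the discrete van den Berg--Kesten inequality, pass to the limit) is indeed the standard route, but the specific reduction to the Bernoulli occupancy variables $X_i^{(n)}=\ind_{\{\omega(Q_i^{(n)})\ge 1\}}$ has a genuine gap: these variables discard the exact locations (and multiplicities) of the points, and a general increasing event on $Y$ in the sense of the paper need \emph{not} admit increasing cylinder approximations $A_n\uparrow A$ in them. Concretely, let $C\subset Y$ be a closed, nowhere dense set of positive Lebesgue measure (a fat Cantor set) and let $A=\{\omega:\supp(\omega)\cap C\neq\emptyset\}$. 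This is an increasing event on $Y$ with $\prob_u(A)=1-e^{-u\Leb_d(C)}>0$, yet for every $n$ the largest $\phi_n$-measurable event contained in $A$ is empty: certifying $A$ from the occupancy pattern alone would require some mesh cube to be entirely contained in $C$, which is impossible since $C$ has empty interior. Hence $\prob_u(A_n)\to\prob_u(A)$ fails and the limit argument collapses. A second, related problem is the witness translation: the continuum witness condition $L(\omega,V)\subseteq A$ quantifies over configurations containing the \emph{specific} points of $\supp(\omega)\cap V$, whereas the discrete witness $I=\{i:Q_i^{(n)}\subset V\}$ must certify $A_n$ for every configuration that merely places \emph{some} point in each occupied cube indexed by $I$; the implication $\omega\in A\circ B\Rightarrow\phi_n(\omega)\in A_n\circ_{\mathrm{disc}}B_n$ therefore does not follow for general increasing $A,B$.

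The fix, and essentially what \cite{MR96} does, is to discretize without losing information: view $\omega|_Y$ as an element of the product space $\prod_{i}\Omega_i$, where $\Omega_i$ is the space of finite point configurations in $Q_i^{(n)}$. The factors are independent under $\prob_u$, the events $A$ and $B$ are already measurable with respect to this product structure (no cylinder approximation of $A$ or $B$ is needed), and one applies the van den Berg--Kesten inequality for increasing events on products of \emph{general} probability spaces---not just $\{0,1\}^{K_n}$---with disjoint occurrence defined through disjoint sets of cube indices. The only limiting step then required is over the mesh size $n$, to reconcile the fixed grid with witnesses $V,W$ that are arbitrary finite unions of rational cubes; this is the part your last paragraph correctly anticipates. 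As written, however, your proof does not go through.
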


A proof of this inequality in a more general setting can be found in \cite{MR96}, Theorem 2.3.

\subsection{Key proposition}

The proofs of Theorem \ref{thm:Chemical_distance} and Theorem \ref{thm:too_long_Euclidean_distance_is_very_unlikely} exploit the invariance of the law of $\prob_u$ under translations and rotations. This implies that we only need to take care of large deviation results for the distance between the origin and points of the form $Re_1$, where $e_1=(1,0,\ldots,0)$.  We therefore restrict ourselves to the above case and define the following:

For $r\in\R$ let
 \[
H_r=\{x\in\R^d~:~\left<x,e_1\right>=r\}. 
\]
Given $r<s$ and a path $\gamma$ we write 
\[
	H_r < \gamma <H_s
\]
if all points of $\gamma$, except possibly its endpoints, lie strictly between the hyper-planes $H_r$ and $H_s$.

The following variants on the distance will stand in the core of the proofs. For $N,M>0$ define the random variables,
\begin{equation}\label{eq:LD_defn_of_s_M_N}
	s_{M,N} = \inf\left\{\len(\gamma) ~:~ 
	\begin{array}{l}
		\gamma \text{ is a path from } \{0\}\times [0,N]^{d-1}  \\ 
		\text{to } H_M \text{ such that } H_0 < \gamma < H_M
	\end{array}
	\right\}
\end{equation}
and
\begin{equation}\label{eq:LD_defn_of_hat_s_M_N}
	\widehat{s}_{M,N} = \inf\left\{\len(\gamma) ~:~ 
	\begin{array}{l}
		\gamma \text{ is a path from } \{0\}\times [0,N]^{d-1}\text{ to } H_M \text{ such}\\
		\text{that with the exception of its endpoints,} \\
		\gamma \subset (0,M)\times [-4M,4M]^{d-1}
	\end{array}
	\right\}.
\end{equation}
It follows from the definitions of $s_{M,N}$ and $\widehat{s}_{M,N}$ that
\begin{equation}\label{eq:LD_comparing_s_and_hat_s}
	s_{M,N} \leq \widehat{s}_{M,N}.
\end{equation}

The main estimate used in the proofs of both theorems is stated in the next proposition:

\begin{proposition}\label{prop:LD_key_prop}
Let $\brk{X_q(M,N)}_{q\geq 0}$ and $\brk{\widehat{X}_q(M,N)}_{q\geq 0}$ be sequences of independent random variables having the same distribution as $s_{M,N}$ and $\widehat{s}_{M,N}$. For every two integers $2\leq N\leq M\leq R/2$ and every real number $x\geq 0$,
\begin{equation}
\begin{aligned}
	\prob_u\brk{\dist(0,Re_1) < x}  &\leq 
	\sum_{Q \geq \frac{R}{M+N}-1} \brk{2d\brk{16\frac{M}{N}}^d}^Q \prob_u\brk{\sum_{q=0}^{Q-1} \widehat{X}_q(M,N) <x}	 \\ 
	& \leq 	\sum_{Q \geq \frac{R}{M+N}-1} \brk{2d\brk{16\frac{M}{N}}^d}^Q \prob_u\brk{\sum_{q=0}^{Q-1} X_q(M,N) <x}.	
\end{aligned}
\end{equation}
\end{proposition}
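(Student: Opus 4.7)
The plan is to convert the event $\{\dist(0,Re_1)<x\}$ into the existence of many disjoint slab-crossings along an almost-minimizing path and then to exploit the independence structure of the Poisson process, via Theorem~\ref{thm:BK_inequality}, to bound the joint probability by a product. All relevant events are increasing in $\omega$, since adding Poisson points can only shrink $\dist$-lengths, so BK applies.

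\textbf{Step 1 (path decomposition).} On $\{\dist(0,Re_1)<x\}$, fix an almost-minimizing path $\gamma:[0,1]\to\R^d$ from $0$ to $Re_1$ with $\len(\gamma)<x$, and set $\gamma_1(t):=\langle \gamma(t),e_1\rangle$. Define iteratively $t_q:=\inf\{t:\gamma_1(t)\ge q(M+N)\}$ and $t_q':=\inf\{t>t_q:\gamma_1(t)\ge q(M+N)+M\}$; let $Q$ be the largest integer for which $t_{Q-1}'$ is defined. Since $\gamma_1(1)=R$, this forces $Q\ge R/(M+N)-1$. Trim $\gamma|_{[t_q,t_q']}$ to its final excursion from $H_{q(M+N)}$ to $H_{q(M+N)+M}$ that stays strictly between these hyperplanes on its interior: the resulting piece $\gamma_q$ is admissible for the definition of $s_{M,N}$, and, once confined to an $\widehat{s}_{M,N}$-type box, the pieces $\gamma_0,\ldots,\gamma_{Q-1}$ are spatially disjoint thanks to the $e_1$-gaps of width $N$ separating consecutive slabs.

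\textbf{Step 2 (discretization and counting).} To decouple the decomposition from $\omega$, round the entry point of $\gamma_q$ on $H_{q(M+N)}$ to the nearest $N$-cube. Any candidate cube that can support a crossing contained in the $\widehat{s}_{M,N}$-box $(0,M)\times[-4M,4M]^{d-1}$ must lie in a perpendicular window of side at most $8M$, yielding at most $(16M/N)^d$ candidates per slab. The factor $2d$ encodes the coordinate direction $\pm e_i$ along which the slab is aligned: if $\gamma$ drifts substantially away from the $e_1$-axis, some pieces must be extracted perpendicular to a different axis, chosen greedily from among the $2d$ options according to which coordinate of $\gamma$ has just advanced by at least $M+N$. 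This produces $(2d(16M/N)^d)^Q$ combinatorial patterns in total.

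\textbf{Step 3 (BK and aggregation).} Fix a combinatorial pattern, and for each $q$ let $A_q(\ell_q)$ be the event that inside the $q$-th translated box there exists a crossing from the prescribed $N$-cube to the opposite face of $\dist$-length less than $\ell_q$. Each $A_q(\ell_q)$ is an increasing event localized in its box; by translation invariance $\prob_u(A_q(\ell_q))=\prob_u(\widehat{s}_{M,N}<\ell_q)$; and since the boxes are pairwise disjoint, Theorem~\ref{thm:BK_inequality} (together with the Poisson independence on disjoint regions) gives $\prob_u\bigl(\bigcap_q A_q(\ell_q)\bigr)\le\prod_q\prob_u(\widehat{s}_{M,N}<\ell_q)$. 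Taking the union over all feasible allocations $\sum_q\ell_q<x$ yields $\prob_u\bigl(\sum_{q=0}^{Q-1}\widehat{X}_q(M,N)<x\bigr)$; multiplying by the combinatorial count and summing over $Q\ge R/(M+N)-1$ gives the first inequality. The second inequality is immediate from $s_{M,N}\le \widehat{s}_{M,N}$ (equation~\eqref{eq:LD_comparing_s_and_hat_s}), which stochastically dominates and hence increases the corresponding probability. The principal difficulty is the geometric bookkeeping in Steps~1--2: ensuring that each extracted $\gamma_q$ fits inside the confinement $(0,M)\times[-4M,4M]^{d-1}$ (so that only $(16M/N)^d$ cubes need to be enumerated) and that the greedy axis selection from $2d$ options covers all possible behaviors of $\gamma$. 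Once this is done, the probabilistic step is routine.
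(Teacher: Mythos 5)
Your overall architecture is the paper's: extract disjoint slab-crossings from a near-minimizing path, compare each to $\widehat{s}_{M,N}$, apply the BK inequality over a fixed combinatorial pattern, and sum over patterns and over rational length-allocations; the final reduction from $\widehat{s}_{M,N}$ to $s_{M,N}$ via \eqref{eq:LD_comparing_s_and_hat_s} is also as in the paper. However, Step~1 as you have literally defined it has a genuine gap. Your stopping times $t_q=\inf\{t:\gamma_1(t)\ge q(M+N)\}$ track only the $e_1$-coordinate, so within a single slab the path may wander arbitrarily far in the perpendicular directions. Consequently (i) the extracted piece need not fit inside any box of the form required by $\widehat{s}_{M,N}$ (the confinement $(0,M)\times[-4M,4M]^{d-1}$ is part of that definition, and without it you cannot compare the piece's length to $\widehat{s}_{M,N}$), and (ii) the entry cube of the $(q+1)$-th crossing can be arbitrarily far from that of the $q$-th, so the number of combinatorial patterns is not bounded by $\brk{2d(16M/N)^d}^Q$ --- the per-step count of candidate cubes must be taken \emph{relative to the previous anchor}, and with pure $e_1$-slabs that relative displacement is unbounded.

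The fix is not a patch on top of Step~1 but a replacement of it, and it is exactly what you gesture at in Step~2: define $x_{q+1}$ as the first point of $\gamma$ after $x_q$ with $|\gamma(t)-x_q|_\infty=M+N$. This single device does all three jobs at once: the coordinate achieving the $\ell^\infty$-norm determines the axis $j(q)$ and sign $\sigma(q)$ (whence the factor $2d$); each segment is trapped in a box of side $O(M)$ about $x_q$, so the crossing extracted between two hyperplanes $M$ apart in direction $e_{j(q)}$ is genuinely of $\widehat{s}_{M,N}$ type; and consecutive anchor cubes satisfy $N|m(q+1)-m(q)|_\infty\le 3M+5N$, which is what yields the $(16M/N)^d$ count per step. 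The bound $Q\ge R/(M+N)-1$ survives because each step advances $\langle\cdot,e_1\rangle$ by at most $M+N$. Since you explicitly defer this ``geometric bookkeeping'' as the principal difficulty rather than carrying it out, and your written decomposition would not support the counting, the proof is incomplete at precisely the step where the work lies; the probabilistic half (increasing events, disjoint occurrence on disjoint boxes, BK, aggregation) is correct as you present it, modulo the standard caveat that one restricts to piecewise-linear true geodesics so that the disjoint supports can be taken to be finite unions of rational cubes.
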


\begin{proof}
	Since $s_{M,N}\leq \widehat{s}_{M,N}$, the second inequality is immediate. Assume $\gamma:[0,1]\to\R^d$ is a simple path from the origin to $H_R$. We choose a sequence of points $(x_0,x_1,\dots,x_Q)$ along $\gamma$ as follows: Define $x_0=\gamma(0)=0$. Assume that $(x_0,x_1\ldots,x_q)$ have already been chosen such that $x_i=\gamma(t_i)$ with $0=t_0<t_1<\ldots<t_q$. Then we define 
\begin{equation}\label{eq:main_prop_defn_of_t_q}
	t_{q+1} = \min\{t\in (t_q,1] ~:~ |\gamma(t)-x_q|_\infty = M+N\}	,\quad x_{q+1}=\gamma(t_{q+1}),
\end{equation}
provided such time $t$ exists. If no such $t$ exists, i.e., $|\gamma(t)-x_q|_\infty < M+N$ for all $t\in (t_q,1]$, then we set $Q=q$ and stop the process.

It follows from the definition of the points $x_q$ that 
\begin{equation}\label{eq:bound_on_Q_main_prop}
	Q \geq \frac{R}{M+N}-1.
\end{equation}
Indeed, the distance between $\langle x_q,e_1 \rangle$ and $\langle x_{q+1},e_1 \rangle$ is at most $M+N$ and $\langle x_Q,e_1\rangle \geq R-(M+N)$. 
%

Next, we analyze the path within the time interval $[t_q,t_{q+1}]$ for $0\leq q\leq Q-1$. Since, by definition, $|x_q-x_{q+1}|_\infty = M+N$, it follows that there exist $j=j(q)\in\{1,2,\ldots,d\}$ and $\sigma =\sigma(q)\in \{-1,1\}$ such that 
\[
	\langle x_{q+1} - x_q , e_{j(q)} \rangle = \sigma(q) (M+N).
\]

We introduce the hyperplanes,
\[
H^j_r = \{x\in\R^d ~|~ \langle x,e_j \rangle = r\},
\qquad j=1,\dots,d, \qquad r\in\R,
\]
so that, in particular, $H^1_r = H_r$.
The path $\gamma|_{[t_q,t_{q+1}]}$ is strictly restricted between the hyperplanes
\[
H(q) = H^{j(q)}_{\langle x_q,e_{j(q)}\rangle}
\Textand
H'(q) = H^{j(q)}_{\langle x_q,e_{j(q)}\rangle+\sigma(q)(M+N)}\,\,.
\]
Since the distance between these hyperplanes is $M+N$, we can find two hyperplanes $H''(q)$ and $H'''(q)$ at a distance $M$ one of the other, that are strictly between $H(q)$ and $H'(q)$ (see \figref{fig:main_prop}). Further, we denote by $[t''(q),t'''(q)]$ the sub-interval  of $[t_q,t_{q+1}]$ in which the path $\gamma$ contains a unique crossing between the two hyperplanes $H''(q)$ and $H'''(q)$.

\begin{figure}
\begin{center}
\includegraphics[height=5in]{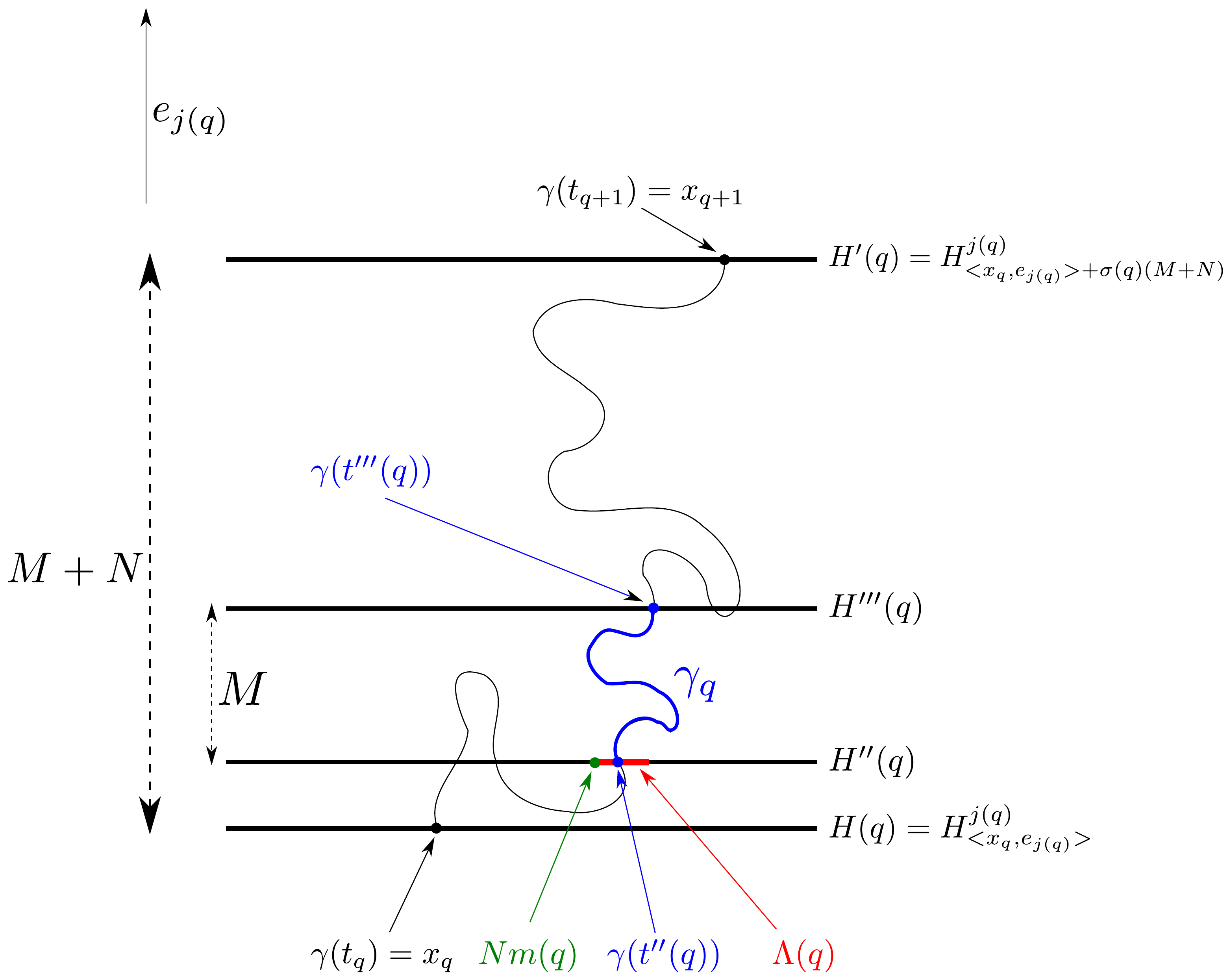}
\end{center}
\caption{Illustration of the times $t_q,t_{q+1},t''(q),t'''(q)$, the points $x_q,x_{q+1},\gamma(t''(q)),\gamma(t'''(q))$, the hyperplanes $H(q),H'(q),H''(q),H'''(q)$, the box $\Lambda(q)$ with its corner $Nm(q)$ and the path $\gamma_q$ crossing from $\Lambda(q)$ to the $H'''(q)$. }
\label{fig:main_prop}
\end{figure}

Specifically, let $m_{j(q)}(q)$ be the integer $m$ such that $H^{j(q)}_{mN}$ is the hyperplane bounded between $H(q)$ and $H'(q)$ that is the closest to $H(q)$. \revision{Explicitly,} 
\[
m_{j(q)}(q) = \left\lfloor \frac{\langle x_q,e_{j(q)}\rangle}{N}\right\rfloor + \frac{1}{2}(\sigma(q)+1)	.
\]
We then define
\[
H''(q) = H^{j(q)}_{mN}
\Textand
H'''(q) = H^{j(q)}_{mN+\sigma(q)M},
\]
and let $[t''(q),t'''(q)]\subset [t_q,t_{q+1}]$ be the subinterval defined by 
\[ \begin{array}{ll}
	t''(q) & = \max\left\{t\in [t_q,t_{q+1}) ~:~ \gamma(t) \in H''(q)\right\},\\
	\\
	t'''(q) &=\min\left\{t\in (t''(q),t_{q+1}) ~:~\gamma(t) \in H'''(q)\right\}.
	\end{array}
\]
Finally, denote by $\gamma_q$ the path $\gamma$ restricted to the time interval $[t''(q),t'''(q)]$. 

We are interested in a bound on paths starting within a $(d-1)$-dimensional box of side length $N$ within the hyperplane $H_0$, see \eqref{eq:LD_defn_of_hat_s_M_N}. We denote by $m(q)=(m_1(q),m_2(q),\ldots,m_d(q))$ the unique point in $\bbZ^d$ satisfying 

\[
\gamma(t''(q)) \in \Lambda(q) = H''(q) \cap \brk{m(q)N + [0,N)^d}.
\]
where $\Lambda(q)$ is a $(d-1)$-dimensional box of side length $N$ within the hyperplane $H''(q)$.

Exploiting all the  above definitions, the path segments $\gamma_q$ satisfy the following properties:
\begin{itemize}
	\item \revision{The images of} $\gamma_q$ \revision{in $\R^d$} are pairwise disjoint.
	\item $\gamma_q$ connects $\Lambda(q)\subset H''(q)$ to $H'''(q)$.
	\item The path $\gamma_q$ lies strictly between $H''(q)$ and $H'''(q)$,  except for its endpoints. 
	\item The path $\gamma_q$ is contained in the box 
	\[
	B(q) =  N m (q) + [-4M,4M]^{j(q)-1} \times [0,\revision{\sigma(q)}M] \times [-4M,4M]^{d-j(q)},
	\]
	where we used the fact that $N+M\le 2M$.
\item The total length of the paths $\gamma_q$ satisfies 
\[
	\sum_{q=0}^{Q-1} \len(\gamma_q) \leq \len(\gamma).
\]
\end{itemize}

Fix $x>0$, fix $Q\ge  \frac{R}{M+N}-1$ and fix $(j(q),\sigma(q),m(q))$, $q=0,\dots,Q-1$. 
We denote by $A(j,\sigma,m,x)$ the event that there exists a piecewise-linear simple path $\gamma$ containing disjoint segments $\gamma_q\subset B(q)$ crossing the box $B(q)$ in the $j(q)$ direction from $\Lambda(q)\subset H''(q)$ to $H'''(q)$, with $\sum_{q=0}^{Q-1} \len(\gamma_q) \leq x$.

Since every true geodesic is piecewise-linear, we conclude that the event
\[
\{\dist(0,Re_1) <x \}
\]
occurs only if there exists a $Q\geq \frac{R}{M+N}-1$ and  there exist $(j(q),\sigma(q),m(q))$, $q=0,\dots,Q-1$ such that $A(\revision{j,\sigma,m},x)$ occurs, \revision{therefore},
\beq \label{eq:LD_key_prop_eq_1}
\prob_u(\dist(0,Re_1) <x) \le \sum_{Q\ge  \frac{R}{M+N}-1} \sum_{(j(q),\sigma(q),m(q))}
\prob_u(A(j,\sigma,m,x)).
\eeq

The event $A(j,\sigma,m,x)$ is characterized by the existence of a path whose properties are specified over disjoint segments. Denote by
$A_q(j,\sigma,m,x)$
the event that there exists a piecewise-linear simple path $\gamma$ containing a segment $
\gamma_q\subset B(q)$ crossing the box $B(q)$ in the $j(q)$ direction from 
$\Lambda(q)\subset H''(q)$ to  $H'''(q)$ and satisfies $\len(\gamma_q)<x$.

Then,
\begin{equation}
\begin{split}
& \prob_u(A(j,\sigma,m,x)) \\
=&  \prob_u \Bigg(\bigcup_{\tiny{\begin{array}{c}
r_{0},r_{1},\ldots,r_{Q-1}\in\mathbb{Q}_{+}\\
\sum_{q=0}^{Q-1}r_{q}<x
\end{array}}} \hspace{-5mm}A_0(j,\sigma,m,r_0) \circ A_1(j,\sigma,m,r_1) \circ \dots 
 \circ A_{Q-1}(j,\sigma,m,r_{Q-1})\Bigg).
\end{split}
\end{equation}

Since the paths $\gamma_q$ are disjoint and piecewise linear, the conditions of the BK inequality are satisfied and we conclude that 
\[
\prob_u(A(j,\sigma,m,x)) \leq \sum_{\tiny{\begin{array}{c}
r_{0},r_{1},\ldots,r_{Q-1}\in\mathbb{Q}_{+}\\
\sum_{q=0}^{Q-1}r_{q}<x
\end{array}}} \prod_{q=0}^{Q-1}\prob_u(A_q(j,\sigma,m,r_q)).
\]

Noting that for a given configuration $(j,\sigma,m)$ the minimal length of a path connecting $\Lambda(q)$ to $H''(q)$ inside $B(q)$ has the same distribution as $\widehat{s}_{M,N}$, we conclude that
\[
\begin{split}
\prob_u(A(j,\sigma,m,x)) &\leq \hspace{-0.4cm} \sum_{\tiny{\begin{array}{c}
r_{0},r_{1},\ldots,r_{Q-1}\in\mathbb{Q}_{+}\\
\sum_{q=0}^{Q-1}r_{q}<x
\end{array}}} \prod_{q=0}^{Q-1}\prob_u(\widehat{X}_q(M,N)\leq r_q) 
 = \prob_u\brk{\sum_{q=0}^{Q-1}\widehat{X}_q(M,N)< x},
\end{split}
\]
which combined with \eqref{eq:LD_key_prop_eq_1}, yields
\begin{equation} \label{eq:LD_key_prop_eq_2}
	\prob_u(\dist(0,Re_1)<x) \leq \sum_{Q\geq\frac{R}{M+N}-1} \brk{\begin{array}{c}\text{number of choices}\\
	\text{for }(j,\sigma,m)\end{array}}\cdot \prob_u\brk{\sum_{q=0}^{Q-1}\widehat{X}_q(M,N)< x}.
\end{equation}

To complete the proof, we need to show that the number of ways to choose the triplets $(j,\sigma,m)$ is bounded by $(2d(16M/N)^d)^Q$.
To this end, assume that $(j,\sigma,m)$ has already been chosen. By the definition of the times $t''(q)$, $t''(q+1)$, $t_q$ and $t_{q+1}$ and the points $m(q)$ and $m(q+1)$,
\[
\begin{aligned}
N |m(q+1)- m(q)|_\infty & \leq 
|N m(q+1)-\gamma(t''(q+1))|_\infty + 
|\gamma(t''(q+1)) - \gamma(t_{q+1})|_\infty \\
&  + |\gamma(t_{q+1})-\gamma(t_q)|_\infty + 
|\gamma(t_q)-\gamma(t''(q))|_\infty \\
&+ |N m(q)-\gamma(t''(q))|_\infty \\
& \leq  3M+5N.
\end{aligned}
\]

Since $m(q+1)\in \bbZ^d$, it follows that there are at most $\brk{6\frac{M}{N} + 10}^d$ ways to choose $m(q+1)$ given $m(q)$. Moreover, there are at most $d$ choices for $j(q+1)$ and $2$ choice for $\sigma(q+1)$, hence    given $(j(q),\sigma(q),m(q))$ there are at most 
\[
2d\brk{6\frac{M}{N} + 10}^d\leq 2d\brk{16\frac{M}{N}}^d
\]
choices for $(j(q+1),\sigma(q+1),m(q+1))$ and in total at most 
\[
\brk{2d\brk{16\frac{M}{N}}^d}^Q
\]
choices for the whole sequence. 
\end{proof}

\subsection{Proof of Theorem \ref{thm:Chemical_distance}}

We separate the proof of Theorem \ref{thm:Chemical_distance} into two parts: a lower bound estimation and an upper bound estimation. We start with the first. 

Formally, the claim for the lower bound is that for every $u\in[0,u_*)$ and every $\e>0$ there exists a constant $c$, depending only on $d$, $u$ and $\varepsilon$, such that for $R$ large enough
\begin{equation}
	\prob_u\brk{\exists x\in\R^d~:~|x|=R,~\dist(0,x)< (\eta(u)-\varepsilon) R}< e^{-c R}.
\end{equation}

We start by showing that $\eta(u)$ can also be obtained as the limiting distance between a point and a hyperplane:

\begin{lemma}\label{lem:point_to_hyperplane_lemma_1}
For every $u\in[0,u_*)$, 
\begin{equation}
	\lim_{R\to\infty} \frac{\dist(0,H_R)}{R} = \eta(u), \quad\text{$\prob_u$-a.s.}
\end{equation}
\end{lemma}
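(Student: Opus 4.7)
The plan is to prove the result by establishing matching upper and lower bounds for $\dist(0,H_R)/R$ as $R\to\infty$. The upper bound is immediate: since $Re_1 \in H_R$,
\[
\frac{\dist(0,H_R)}{R} \leq \frac{\dist(0,Re_1)}{R},
\]
and the right-hand side tends to $\eta(u)$ almost surely by Theorem~\ref{thm:distance_in_the_plane}.

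For the lower bound, fix $\e>0$ and choose a large constant $C=C(u,\alpha)>0$ to be specified (with $\alpha$ as in Theorem~\ref{thm:too_long_Euclidean_distance_is_very_unlikely}). I would split $H_R$ into a ``near'' part $H_R \cap \overline{B(0,CR)}$ and a ``far'' part $H_R \setminus \overline{B(0,CR)}$, and bound the infimum of $\dist(0,\cdot)$ separately on each. For the near part, rescaling by $1/R$ gives $H_R \cap \overline{B(0,CR)} = R \cdot K$ where $K := H_1 \cap \overline{B(0,C)}$ is a compact subset of $\R^d$ with $|x'|\geq 1$ for every $x' \in K$. The uniform convergence statement in Theorem~\ref{thm:distance_in_the_plane} (applied to $K\cup\{0\}$) yields almost surely, for all $R$ sufficiently large,
\[
\inf_{x' \in K} \frac{\dist(0,Rx')}{R} \geq \eta(u)\cdot \inf_{x'\in K}|x'| - \e \geq \eta(u) - \e.
\]

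For the far part, I would choose $C$ so that $C/\alpha \geq \eta(u)$. By Theorem~\ref{thm:too_long_Euclidean_distance_is_very_unlikely}(1) applied with $CR$ in place of $R$, with $\prob_u$-probability at least $1 - C_2 e^{-c_3 CR}$, every true geodesic $\gamma$ starting at $0$ with $\len_\euc(\gamma) \geq CR$ satisfies $\len(\gamma) \geq \len_\euc(\gamma)/\alpha$. For any $x \in H_R \setminus \overline{B(0,CR)}$, a true geodesic $\gamma \in \Gamma_0(0,x)$ has $\len_\euc(\gamma) \geq |x| \geq CR$, hence
\[
\dist(0,x) = \len(\gamma) \geq \frac{|x|}{\alpha} \geq \frac{CR}{\alpha} \geq \eta(u)R.
\]
Combining the two estimates, with $\prob_u$-probability at least $1 - C_2 e^{-c_3 CR}$ we obtain $\dist(0,H_R)/R \geq \eta(u)-\e$ for all sufficiently large $R$ (in the a.s.\ sense for the near contribution, using the exponential tail for the far one).

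To upgrade to a full almost-sure statement uniform in $R\in[0,\infty)$, I would apply Borel-Cantelli along integers (since $\sum_n e^{-c_3 Cn}<\infty$) to get $\dist(0,H_n)/n \to \eta(u)$ $\prob_u$-a.s., and then exploit the monotonicity $R\mapsto \dist(0,H_R)$ (any path from $0$ to $H_{R'}$ with $R'>R$ must cross $H_R$) to interpolate via the sandwich $\dist(0,H_n)/(n+1) \leq \dist(0,H_R)/R \leq \dist(0,H_{n+1})/n$ for $R\in[n,n+1]$. The main obstacle is the lower bound on the unbounded ``far'' part: the set $H_R\setminus\overline{B(0,CR)}$ is non-compact, so the uniform convergence of Theorem~\ref{thm:distance_in_the_plane} is insufficient there; the saving grace is that Theorem~\ref{thm:too_long_Euclidean_distance_is_very_unlikely}(1) provides a \emph{simultaneous} lower bound on $\len(\gamma)$ in terms of $\len_\euc(\gamma)$ for \emph{all} long true geodesics emanating from $0$, which exactly furnishes the needed uniform control.
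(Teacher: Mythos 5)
Your proof is correct, but it follows a genuinely different route from the paper's. The upper bound is identical. For the lower bound, the paper argues by contradiction: if $\liminf_{R\to\infty}\dist(0,H_R)/R=\eta(u)-2\delta$, it picks near-minimizing points $z_k\in H_{R_k}$ and, crucially, normalizes by $|z_k|$ rather than by $R_k$; since $|z_k|\ge R_k$ this yields $\limsup_k \dist(0,z_k)/|z_k|\le\eta(u)-\delta$, which already contradicts the uniform convergence on $S^{d-1}$ from Theorem~\ref{thm:distance_in_the_plane} --- writing $z_k=|z_k|\hat z_k$, that statement controls $\dist(0,w)/|w|$ for \emph{all} $w$ with $|w|$ large, irrespective of direction, so the unboundedness of $H_R$ is harmless and no near/far decomposition is needed. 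By normalizing by $R$ throughout, you are forced to treat the far part of $H_R$ separately, and you do so with the heavier machinery of Theorem~\ref{thm:too_long_Euclidean_distance_is_very_unlikely}(1), plus Borel--Cantelli along integers and the monotone interpolation $\dist(0,H_n)/(n+1)\le\dist(0,H_R)/R\le\dist(0,H_{n+1})/n$; all of this is valid, and no circularity arises since Theorem~\ref{thm:too_long_Euclidean_distance_is_very_unlikely} is proved independently of Theorem~\ref{thm:Chemical_distance} and of this lemma. In fact your far-part step could be dispensed with entirely by the paper's renormalization trick: for $x\in H_R$ with $|x|\ge R$ large, uniform convergence on the sphere at scale $|x|$ gives $\dist(0,x)\ge(\eta(u)-\e)|x|\ge(\eta(u)-\e)R$ directly. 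So the paper's argument is shorter and uses fewer ingredients; yours is quantitatively more explicit but invokes a large-deviation estimate where a soft scaling observation suffices.
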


\begin{proof}
From the definition of $\dist(0,H_R)$, 
\[
	\dist(0,H_R) \leq \dist(0,Re_1),
\]
and therefore 
\[
\limsup_{R\to\infty} \frac{\dist(0,H_R)}{R} \leq \lim_{R\to\infty} \frac{\dist(0,Re_1)}{R} = \eta(u),
\quad \text{$\prob_u$-a.s}.
\]
If $\eta(u)=0$, then there is nothing left to prove since $\dist(0,H_R)\geq 0$, hence assume that $\eta(u)>0$. Let $\omega\in\Omega$ be a realization such that 
\begin{equation}\label{eq:lemma_on_point_hyperplane_distance}
	\liminf_{R\to\infty}\frac{\dist(0,H_R)}{R} = \eta(u)-2\delta,\quad \text{ for some } \delta>0.
\end{equation}
Then, there exists an increasing sequence $R_k$ that tends to infinity, a sequence of points $z_k\in H_{R_k}$, and a sequence of paths $\gamma_k$ from $0$ to $z_k$ such that 
\[
\dist(0,z_k) = \len(\gamma_k) \leq \dist(0,H_{R_k}) + \delta R_k \leq R_k (\eta(u) -\delta).
\]
This however implies that 
\[
\limsup_{k\to\infty} \frac{\dist(0,z_k)}{|z_k|}\leq 
\limsup_{k\to\infty}  \frac{R_k(\eta(u)-\delta)}{|z_k|} \leq \eta(u)-\delta,
\]
contradicting the uniform convergence of the distance function on $S^{d-1}$ proved in Theorem \ref{thm:distance_in_the_plane}. Thus the event in \eqref{eq:lemma_on_point_hyperplane_distance} has probability zero and the claim follows. 
\end{proof}

Next, we show that the value of $s_{M,N}$ defined in \eqref{eq:LD_defn_of_s_M_N} cannot be much smaller than $M\eta(u)$.
	
\begin{lemma}\label{lem:point_to_hyperplane_lemma_2}
For every $\varepsilon>0$ 
	\begin{equation}
		\lim_{M\to\infty}\max_{N\leq M} \prob_u\brk{s_{M,N} \leq 	M(\eta(u)-\varepsilon)}=0.
	\end{equation}
\end{lemma}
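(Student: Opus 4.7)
The plan is to reduce $s_{M,N}$ to distances of single points to $H_M$, so that \lemref{lem:point_to_hyperplane_lemma_1} can be applied, handling the starting set $A := \{0\}\times[0,N]^{d-1}$ via a discretization argument that is controlled uniformly in $N\le M$.

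First, I would drop the hyperplane restriction: since any path appearing in the definition of $s_{M,N}$ is also a path from $A$ to $H_M$ without constraint, we have
\[
s_{M,N} \;\ge\; \tilde s_{M,N} \;:=\; \inf_{z\in A} \dist(z,H_M).
\]
Next, because $\dist(z,z')\le |z-z'|$ (the straight segment is admissible), the function $z\mapsto \dist(z,H_M)$ is $1$-Lipschitz with respect to the Euclidean metric. Fix $\delta>0$ and choose a finite set $\{z_1,\dots,z_K\}\subset A$ that is a $\delta$-net of $A$, which can be done with $K\le C(d)(N/\delta)^{d-1}$. Then
\[
\tilde s_{M,N} \;\ge\; \min_{1\le k\le K} \dist(z_k,H_M)\;-\;\delta.
\]
Since each $z_k\in H_0$, translation by $-z_k$ preserves the hyperplane $H_M$, and by translation invariance of $\prob_u$ (see \lemref{lem:ergodicity}) each $\dist(z_k,H_M)$ has the same distribution as $\dist(0,H_M)$.

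Now I would choose $\delta=M\varepsilon/2$ and apply a union bound:
\[
\prob_u\!\brk{s_{M,N}\le M(\eta(u)-\varepsilon)} \le \prob_u\!\brk{\min_k \dist(z_k,H_M) \le M(\eta(u)-\varepsilon/2)} \le K\cdot \prob_u\!\brk{\dist(0,H_M) \le M(\eta(u)-\varepsilon/2)}.
\]
Under the assumption $N\le M$, the cardinality $K$ is bounded by $C(d)(2/\varepsilon)^{d-1}$, which is a constant independent of $M$ and $N$. By \lemref{lem:point_to_hyperplane_lemma_1}, $\dist(0,H_M)/M\to \eta(u)$ $\prob_u$-almost surely, hence also in probability, so $\prob_u(\dist(0,H_M)\le M(\eta(u)-\varepsilon/2))\to 0$ as $M\to\infty$. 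Taking the supremum over $N\le M$ gives the desired claim.

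The only genuine obstacle is handling the infimum over the $(d-1)$-dimensional starting set $A$: a naive union bound over $A$ would be uncountable, but the $1$-Lipschitz property of $\dist(\cdot, H_M)$ allows discretization on a Euclidean scale $\delta=M\varepsilon/2$, and since $N\le M$ the resulting net has constant size. Everything else follows from translation invariance of $\prob_u$ and the a.s.\ convergence already established in \lemref{lem:point_to_hyperplane_lemma_1}.
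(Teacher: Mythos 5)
Your proof is correct and takes essentially the same route as the paper's: both drop the hyperplane restriction, discretize the $(d-1)$-dimensional starting box at a Euclidean scale proportional to $M\varepsilon$ (the paper covers it by $O(\varepsilon^{-(d-1)})$ sub-boxes and uses the bound $r_M(0)\le r_M(v)+2\delta M$, which is exactly your $1$-Lipschitz/union-bound mechanism), and reduce to $\dist(0,H_M)$ via translation invariance and \lemref{lem:point_to_hyperplane_lemma_1}. The only cosmetic difference is that the paper handles the maximum over $N\le M$ by the monotonicity $r_{M,N}\ge r_{M,M}$ before discretizing, whereas you observe directly that the net size is bounded uniformly in $N\le M$; both are fine.
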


\begin{proof}
If $\eta(u)=0$, then there is nothing to prove since $s_{M,N}\geq 0$. We therefore assume that $\eta(u)>0$. Let
\begin{equation}\label{eq:defn_of_r_M_N}
	r_{M,N} = \inf\left\{\len(\gamma) ~:~ \gamma \text{ is a path from } \{0\}\times [0,N]^{d-1} \text{ to } H_M\right\}.
\end{equation}
Since $r_{M,N}$ is an infimum of path lengths over a set larger than that defining $s_{M,N}$, it follows that $r_{M,N}\leq s_{M,N}$, therefore 
\[
\prob_u(s_{M,N} \leq M(\eta(u)-\varepsilon))\leq \prob_u(r_{M,N} \leq M(\eta(u)-\varepsilon))
\le \prob_u(r_{M,M} \leq M(\eta(u)-\varepsilon)),
\]
where in the last passage we used the fact that $r_{M,N}$ is decreasing in $N$.
Hence it is enough to prove that 
\[
\lim_{M\to\infty} \prob_u(r_{M,M} \leq M(\eta(u)-\varepsilon))=0.
\] 

We now show that, in fact, it is sufficient to prove that for some fixed choice of  $\delta=\delta(\varepsilon)\in(0,1/2)$ ,
\begin{equation}\label{eq:point_to_hyperplane_lemma_2_eq_1}
		\lim_{M\to\infty} \prob_u(r_{M,2\delta M} \leq M(\eta(u)-\varepsilon))=0.	
\end{equation}

Indeed, let $\delta\in(0,1/2)$ and for $v\in H_0$ define $r_M(v) = \dist(v,H_M)$. Since $\{0\}\times [0,M]^{d-1}$ is contained in the union of  boxes $\{\{0\}\times (2\delta Mk + [0,2\delta M]^{d-1})\}_{k\in A(\delta)}$ with 
\[
	A(\delta) = \left\{k=(k_2,\ldots,k_d)\in\bbZ^{d-1} :~ 0\leq k_i \leq \lceil 1/(2\delta) \rceil,\quad  \forall 2\leq i\leq d\right\}
\]
it follows that 
\[
\begin{aligned}
	r_{M,M}  & = \inf\{r_M(v) ~:~ v\in \{0\}\times [0,M]^{d-1}\} \\ 
	 & \geq \min_{k\in A(\delta)} \inf\{r_M(v) ~:~ v\in \{0\} \times (2\delta Mk + [0,2\delta M]^{d-1})\}.
\end{aligned}
\]
Since the set $A(\delta)$ is finite, and since each infimum for a fixed $k$ has the same distribution as $r_{M,2\delta M}$ it is indeed sufficient to prove \eqref{eq:point_to_hyperplane_lemma_2_eq_1}. 

Finally, let us prove \eqref{eq:point_to_hyperplane_lemma_2_eq_1}. For every $v\in H_0$ such that $|v|_\infty \leq 2\delta M$ we have $\dist(0,v) \leq 2\delta M$, and therefore 
\[
	r_M(0) \leq r_M(v) + 2\delta M. 
\]
Taking the infimum over all such $v$'s yields
\[
	r_M(0) \leq r_{M,2\delta M} + 2\delta M,
\]
and therefore for $\delta<\varepsilon/4$ ,
\[
\begin{aligned}
		\prob_u(r_{M,2\delta M} \leq M(\eta(u)-\varepsilon)) & \leq 
		 \prob_u(r_M(0)-2\delta M \leq M(\eta(u)-\varepsilon))\\
		 & = \prob_u(\dist(0,H_M)-2\delta M \leq M(\eta(u)-\varepsilon))\\
		 & \leq \prob_u(\dist(0,H_M) \leq M(\eta(u)-\varepsilon/2))
\end{aligned}
\]
which, by Lemma \ref{lem:point_to_hyperplane_lemma_1}, tends to zero as $M$ tends to infinity.
\end{proof}

\begin{proof1}{Proof of Theorem \ref{thm:Chemical_distance} (lower bound)}
For $\eta(u)=0$ the claim is trivial since $\prob_u(\dist(0,x)<0)=0$. 
For $\eta(u)>0$, let $N = \min\left\{M,\left\lfloor \frac{M\varepsilon}{4\eta(u)}\right\rfloor \right\}$. For $Q\geq \frac{R}{M+N}-1$ and $\beta> 0$, 
\begin{equation}\label{eq:proof_of_chemical_distance_1}
\begin{split}
\prob_u\brk{\sum_{q=0}^{Q-1} X_q(M,N) < R(\eta(u)-\varepsilon)} 
&\leq e^{\beta  R(\eta(u)-\varepsilon)}\cdot \Exp_u\Brk{\exp\brk{-\beta  \sum_{q=0}^{Q-1} X_q(M,N)}} \\
&\hspace{-4cm}=  e^{\beta  R(\eta(u)-\varepsilon)}\cdot \Exp_u\Brk{e^{-\beta X_1(M,N)}}^Q \\
&\hspace{-4cm}\leq  e^{\beta  (Q+1)(M+N)(\eta(u)-\varepsilon)}\cdot \Exp_u\Brk{e^{-\beta X_1(M,N)}}^Q \\
&\hspace{-4cm}=  e^{\beta  (M+N)(\eta(u)-\varepsilon)}\cdot \brk{ e^{\beta(M+N)(\eta(u)-\varepsilon)}\cdot  \Exp_u\Brk{e^{-\beta X_1(M,N)}}}^Q \\
&\hspace{-4cm}\leq  e^{\beta  (M+N)(\eta(u)-\varepsilon)}\cdot \brk{ e^{\beta(M+N)(\eta(u)-\varepsilon)}\cdot  \brk{e^{-\beta M(\eta(u)-\varepsilon/2)}+\prob_u(X_1(M,N) < M(\eta(u)-\varepsilon/2))}}^Q \\
&\hspace{-4cm}\leq  e^{\beta  (M+N)(\eta(u)-\varepsilon)}\cdot \brk{ e^{-\beta M\varepsilon/4} +  e^{2\beta M (\eta(u)-\varepsilon)}\prob_u(X_1(M,N) < M(\eta(u)-\varepsilon/2))}^Q.
\end{split}
\end{equation}

On the first line we used Markov's inequality; in the passage to the second line we used the fact that the $X_q(M,N)$ are i.i.d.; in the passage to the third line we used the fact that $R\le (Q+1)(M+N)$; the passage to the fourth line is an immediate algebraic identity; the passage to the fifth line follows from the inequality $\Exp_u[e^{-\beta X}] \le e^{-\beta a} + \prob_u(X< a)$, valid for every positive random variable $X$; finally, the passage to the sixth line follows from the choice of $N$.

Recalling that $X_1(M,N)\sim s_{M,N}$, we obtain from Lemma \ref{lem:point_to_hyperplane_lemma_2} that $0<\prob_u(X_1(M,N) < M(\eta(u)-\varepsilon/2))<1$ for large enough values of $M$. For every such value of $M$ one can find $\beta_M$, depending only on $M,d,\varepsilon,u$, such that
\[
	(\prob_u(X_1(M,N) < M(\eta(u)-\varepsilon/2)))^{-1/3} \leq e^{2\beta_M M(\eta(u)-\varepsilon)} \leq (\prob_u(X_1(M,N) < M(\eta(u)-\varepsilon/2)))^{-1/2}.
\]
When combined with \eqref{eq:proof_of_chemical_distance_1} this implies 
\begin{equation}\label{eq:proof_of_chemical_distance_2}
\begin{split}
\prob_u\brk{\sum_{q=0}^{Q-1} X_q(M,N) < R(\eta(u)-\varepsilon)} 
&\leq  e^{\beta_M(M+N)(\eta(u)-\varepsilon)}\\
&\hspace{-4cm}\cdot \bigg( (\prob_u(X_1(M,N) < M(\eta(u)-\varepsilon/2)))^{\frac{3\varepsilon}{24(\eta(u)-\varepsilon)}} + (\prob_u(X_1(M,N) < M(\eta(u)-\varepsilon/2)))^{1/2}\bigg)^Q.
\end{split}
\end{equation}

Using Lemma \ref{lem:point_to_hyperplane_lemma_2} one more time, we can choose $M=M(\varepsilon,d,u)$ large enough so that  
\begin{equation}\label{eq:proof_of_chemical_distance_3}
\begin{split}
& (\prob_u(X_1(M,N) < M(\eta(u)-\varepsilon/2)))^{\frac{3\varepsilon}{24\eta(u)-\varepsilon)}} + (\prob_u(X_1(M,N) < M(\eta(u)-\varepsilon/2)))^{1/2}\\
&\qquad \leq \brk{32d \cdot \max\left\{2,\frac{ 8\eta(u)}{\varepsilon}\right\}}^{-d}
	\leq \brk{32d\frac{M}{N} }^{-d}.
\end{split}
\end{equation}

For such choices of $M$ and $\beta$, we get from \eqref{eq:proof_of_chemical_distance_2}, \eqref{eq:proof_of_chemical_distance_3} and Proposition \ref{prop:LD_key_prop}   
\[
\begin{split}
\prob_u\brk{\dist(0,Re_1) < R(\eta(u)-\varepsilon)}  
&  \leq 	e^{\beta(M+N)(\eta(u)-\varepsilon)}\cdot \sum_{Q \geq \frac{R}{M+N}-1} \brk{2d\brk{16\frac{M}{N}}^d}^Q \brk{32d\frac{M}{N} }^{-dQ} \\
&\leq  e^{\beta(M+N)(\eta(u)-\varepsilon)}\cdot \sum_{Q \geq \frac{R}{M+N}-1}2^{-dQ}\\
& \leq e^{\beta(M+N)(\eta(u)-\varepsilon)}\cdot 2^{-\frac{dR}{2M}+d+1}.
\end{split}
\]
Recalling that $M$, $N$ and $\beta$ are fixed, this gives the desired exponential decay in $R$. 

Given the result for $x=Re_1$ we turn to deal with general points, $x\in\R^d$, $|x|=R$. Due to the invariance of $\prob_u$ under rotations, $\dist(0,x)$ for $|x|=R$ has the same distribution as $\dist(0,Re_1)$, and therefore for large enough $R$
\[
	\prob_u(\dist(0,x) < (\eta(u)-\varepsilon/2)R) < e^{-c_1R},\quad \forall x\in \R^d,\text{ such that } |x|=R.
\]

Taking an $\varepsilon/2$-net $\mathcal{N}$ on $S^{d-1}$ such that $|\mathcal{N}|\leq \frac{C(d)}{\varepsilon^d}$ we get that 
\[
	\prob_u(\exists x\in \mathcal{N} \text{ such that } \dist(0,Rx) < (\eta(u)-\varepsilon/2)R) \leq \frac{C(d)}{\varepsilon^d}e^{-c_1R}.
\] 
For every $x\in \R^d$ such that $|x|=R$ there exists a $y\in\mathcal{N}$, such that $|x/R-y|<\varepsilon/2$, and therefore 
\[
	\dist(0,Ry) \leq \dist(0,x)+ |Ry-x| \leq \dist(0,x)+\frac{\varepsilon}{2}R.
\]
Hence, 
\[
\begin{split}
& \prob_u\brk{\exists x\in\R^d~:~|x|=R,~\dist(0,x)<(\eta(u)-\varepsilon) R} \\
&\qquad\leq  	\prob_u\brk{\exists y\in \mathcal{N} \text{ such that } \dist(0,Ry) < (\eta(u)-\varepsilon/2)R} \\
&\qquad \leq \frac{C}{\varepsilon^d}e^{-c_1R},
\end{split}
\]
which concludes the proof. 
\end{proof1}

\bigskip\bigskip

Next, we turn to prove the upper bound in Theorem \ref{thm:Chemical_distance}, which states  that for every $u\in[0,u_*)$ and every $\e>0$ there exists a constant $c_1$, depending only on $d$, $u$ and $\varepsilon$, such that for $R$ large enough
\begin{equation}
	\prob_u\brk{\exists x\in\R^d~:~|x|=R,~\dist(0,x)> (\eta(u)+\varepsilon) R}< e^{-c_1 R}.
\end{equation}

We start with the following lemma.

\begin{lemma} \label{lem:LDP_upper_bound_lemma}
For $S<R$ let 
\[
p_{S,R} = \inf\left\{\len(\gamma) ~:~ \begin{array}{l} \gamma \text{ is a path from } Se_1 \text{ to } Re_1 \\
	\text{such that } H_{S}< \gamma < H_{R}
\end{array}
\right\}.
\]
Then 
\[
	\lim_{R\to\infty}\frac{\Exp_u[p_{0,R}]}{R} = \eta(u).
\]
\end{lemma}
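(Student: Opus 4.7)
My plan is to establish three steps: existence of $\tilde\eta := \lim_{R\to\infty}\Exp_u[p_{0,R}]/R$, the inequality $\tilde\eta \ge \eta(u)$, and the reverse inequality $\tilde\eta \le \eta(u)$. For existence, concatenating an admissible path for $p_{0,R}$ with one for $p_{R,R+S}$ at the point $Re_1$ yields an admissible path for $p_{0,R+S}$, giving $p_{0,R+S} \le p_{0,R} + p_{R,R+S}$; translation invariance of $\prob_u$ (Lemma \ref{lem:ergodicity}) identifies the distribution of $p_{R,R+S}$ with that of $p_{0,S}$. Hence $R \mapsto \Exp_u[p_{0,R}]$ is subadditive, and Fekete's lemma gives the existence of the limit with $\tilde\eta = \inf_R \Exp_u[p_{0,R}]/R \in [0,1]$; the upper bound is realized by the linear competitor $\{te_1 : t \in [0,R]\}$, whose $\dist$-length is at most $R$. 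For $\tilde\eta \ge \eta(u)$, note that $p_{0,R}$ is the infimum over a subclass of the paths defining $\dist(0,Re_1)$, so $p_{0,R} \ge \dist(0,Re_1)$, and Theorem \ref{thm:distance_in_the_plane} gives $\Exp_u[\dist(0,Re_1)]/R \to \eta(u)$.

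The heart of the proof is $\tilde\eta \le \eta(u)$. Fix $\varepsilon \in (0, \eta(u)/2)$ and let $\gamma$ be a near-geodesic from $0$ to $Re_1$ with $\len(\gamma) \le \dist(0,Re_1) + 1$. Define
\[
s^* := \sup\{t \in [0,1] : \langle\gamma(t), e_1\rangle \le 0\}, \qquad s^{**} := \inf\{t \ge s^* : \langle\gamma(t), e_1\rangle \ge R\},
\]
and set $a := \gamma(s^*) \in H_0$, $b := \gamma(s^{**}) \in H_R$. By construction $\gamma|_{[s^*, s^{**}]}$ lies strictly between $H_0$ and $H_R$ except at its endpoints. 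The quantitative key is to show that with probability tending to $1$, both $|a|$ and $|b - Re_1|$ are of order $O(\varepsilon) R$. This combines two ingredients: (i) the $L^1$ convergence from Theorem \ref{thm:distance_in_the_plane} (together with the deterministic bound $\dist(0,Re_1)/R \le 1$) implies convergence in probability, so $\dist(0, Re_1) \le (\eta(u) + \varepsilon) R$ with probability $\to 1$; (ii) the already-proved lower-bound half of Theorem \ref{thm:Chemical_distance}, applied with a union bound over a finite net of radii in $[\varepsilon R, 2R]$ plus a short triangle-inequality continuity argument for intermediate radii, yields $\dist(0, y) \ge (\eta(u) - \varepsilon)|y|$ uniformly over $y$ with $\varepsilon R \le |y| \le 2R$, again with probability $\to 1$. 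Since $\dist(0, a) + \dist(a, Re_1) \le \len(\gamma) \le \dist(0, Re_1) + 1$, and $a = (0, a') \in H_0$ gives $|a - Re_1| = \sqrt{R^2 + |a|^2}$, substituting (i) and (ii) into this inequality forces the elementary algebraic estimate $|a| \le C(\eta(u))\,\varepsilon R$, and symmetrically $|b - Re_1| \le C(\eta(u))\,\varepsilon R$.

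On this high-probability event I build a competitor for $p_{0,R}$: for small $\tau > 0$, join $0$ to $a_\tau := \gamma(s^* + \tau)$ (which lies strictly above $H_0$ by the definition of $s^*$) by a Euclidean segment; follow the sub-geodesic $\gamma|_{[s^* + \tau, s^{**} - \tau]}$ (strictly inside the open strip) to $b_\tau := \gamma(s^{**} - \tau)$; then join $b_\tau$ to $Re_1$ by a Euclidean segment. Each piece stays in the open strip except at the overall endpoints $0$ and $Re_1$, so the concatenated path is admissible. Its $\dist$-length is at most $|a_\tau| + \len(\gamma|_{[s^*, s^{**}]}) + |b_\tau - Re_1|$, which tends as $\tau \to 0^+$ to $|a| + \len(\gamma) + |b - Re_1| \le (\eta(u) + O(\varepsilon)) R$. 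Combining with the deterministic bound $p_{0,R} \le R$ on the complementary event yields $\Exp_u[p_{0,R}]/R \le \eta(u) + O(\varepsilon) + o(1)$; sending $R \to \infty$ and then $\varepsilon \to 0$ gives $\tilde\eta \le \eta(u)$. The main obstacle is establishing the uniform lower bound on $\dist(0, y)$ over the random exit point $a$, which dictates the union-bound step built on the already-proved lower half of Theorem \ref{thm:Chemical_distance}.
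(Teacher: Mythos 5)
Your proposal is correct in outline but takes a genuinely different route from the paper. The paper's argument is entirely ``soft'': it introduces the widened-strip quantities $p^k_{S,R}$ (paths confined between $H_{S-k}$ and $H_{R+k}$), applies the subadditive ergodic theorem to each $k$ to get limits $\eta^k(u)$, shows by splicing $p_{-k,R+k}\le p_{-k,0}+p^k_{0,R}+p_{R,R+k}$ that all the $\eta^k$ coincide with $\eta^0$, and then uses monotone convergence in $k$ together with Fekete's bound $\Exp_u[p^k_{0,R}]\ge \eta^k(u)R$ to sandwich $\eta^0(u)=\eta(u)$; no large-deviation input is needed, and the case $\eta(u)=0$ is handled automatically. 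You instead use the just-proved lower-bound half of Theorem~\ref{thm:Chemical_distance} to show quantitatively that a near-geodesic's last exit from $\{x_1\le 0\}$ and first entry to $\{x_1\ge R\}$ occur within $O(\e R)$ of the endpoints, and then surgically replace the excursions by segments. This is not circular (the lower bound of Theorem~\ref{thm:Chemical_distance} does not rely on this lemma), and it is geometrically more transparent, at the price of being quantitative where the paper is not.

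Three details need tightening. First, your uniform lower bound on the annulus $\e R\le|y|\le 2R$ does not a priori cover $a$: you only know $\dist(0,a)\le R+1$, so $|a|$ could exceed $2R$ whenever $\eta(u)\le 1/2$. Take the annulus up to $KR$ with $K>3/\eta(u)$, and exclude $|a|>KR$ by noting that any path from $0$ to such a point crosses the sphere of radius $KR$, forcing $\dist(0,a)\ge(\eta(u)-\e)KR>R+1$. Second, the estimate $\dist(a,Re_1)\ge(\eta(u)-\e)|a-Re_1|$ (and the bound on $|b-Re_1|$) requires the analogous high-probability event centered at $Re_1$; by translation invariance this is another copy of the same union bound, but it must be included in the good event. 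Third, fixing $\e\in(0,\eta(u)/2)$ and dividing by $\eta(u)-\e$ presupposes $\eta(u)>0$; this is \corrref{cor:eta_u_is_positive}, whose proof rests only on Theorem~\ref{thm:too_long_Euclidean_distance_is_very_unlikely} and is therefore available here without circularity, but you should invoke it explicitly. With these patches the argument goes through.
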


\begin{proof}
We follow \cite[Theorem 4.3.7]{HW65}. Since $\dist(0,Re_1) \leq p_{0,R}$,
\begin{equation}\label{eq:LDP_upper_bound_lemma_one_side}
	 \liminf_{R\to\infty}\frac{\Exp_u[p_{0,R}]}{R} \geq \lim_{R\to\infty}\frac{\Exp_u[\dist(0,Re_1)]}{R} = \eta(u). 
\end{equation}
For $k\geq 0$ define 
\[
p^k_{S,R} = \inf\left\{\len(\gamma) ~:~ \begin{array}{l} \gamma \text{ is a path from } Se_1 \text{ to } Re_1 \\
	\text{such that } H_{S-k}< \gamma < H_{R+k}
	\end{array}
	\right\},
\]
so in particular, $p_{S,R}=p^0_{S,R}$. Since for every $k\geq 0$ and $0<R_1<R_2$, 
\begin{equation}\label{eq:LDP_upper_bound_lemma_subadditivity}
	p^k_{0,R_1+R_2} \leq p^k_{0,R_1} + p^k_{R_1,R_1+R_2}
\end{equation}
one can apply the subadditive ergodic theorem \cite{Ki73} for $p^k_{0,R}$ to obtain
\[
	\lim_{R\to\infty}\frac{\Exp_u[p^k_{0,R}]}{R} = \eta^k(u)
\]
which, by the definition of $p^k_{0,R}$, satisfies 
\begin{equation}\label{eq:lemma_for_LDP_upper_bound}
	\eta(u) \leq \eta^k(u) \leq \eta^{k-1}(u) \leq \eta^0(u),\quad \forall k\geq 1.
\end{equation}
Noting that for every $R,k>0$
\[
	p_{-k,R+k} \leq p_{-k,0}  + p^k_{0,R} + p_{R,R+k},
\]
it follows by fixing $k$, taking expectation, dividing by $R$ and taking the limit $R\to\infty$ that 
\[
	\eta^0(u) \leq \eta^k(u),
\]
which together with \eqref{eq:lemma_for_LDP_upper_bound} implies that $\eta^0(u)=\eta^k(u)$ for every fixed $k>0$. Since  for every fixed $\omega\in\Omega$ and $R>0$, $p_{0,R}^k$ is a monotonically decreasing function in $k$  converging to $\dist(0,Re_1)$, it follows from the monotone convergence theorem that 
\begin{equation}\label{eq:lemma_for_LDP_upper_bound_1}
	\lim_{k\to\infty}\Exp_u[p^k_{0,R}] = \Exp_u[\dist(0,Re_1)].
\end{equation}
Using once again the sub-additivity \eqref{eq:LDP_upper_bound_lemma_subadditivity} we conclude that 
\[
\frac{\Exp_u[p^k_{0,R}]}{R} \geq \eta^k(u)=\eta^0(u)
\] 
for every $R>0$ by Fekete's subadditive lemma. Combined with \eqref{eq:lemma_for_LDP_upper_bound_1} this yields 
\[
	\frac{\Exp_u[\dist(0,Re_1)]}{R} \geq \eta^0(u).
\]
Letting $R\to\infty$ implies $\eta^0(u) \leq \eta(u)$.  Together with \eqref{eq:LDP_upper_bound_lemma_one_side}, this completes the proof. 
\end{proof}

\begin{proof1}{Proof of Theorem \ref{thm:Chemical_distance} (upper bound)}
Proving the upper bound is in fact much simpler than proving the lower bound. 
Fix $\varepsilon>0$. By Lemma \ref{lem:LDP_upper_bound_lemma} there exists \revision{a sufficiently large} $R_0>2$ such that 
\begin{equation}\label{eq:LDP_upper_bound_1}
	\frac{\Exp[p^0_{0,R_0}]}{R_0}<\eta(u)+\frac{\varepsilon}{5}.
\end{equation}

For $i\geq 0$ let $X_i = p_{iR_0,(i+1)R_0}$. The random variables $(X_i)_{i\geq 0}$ are i.i.d. (with the same distribution as $p_{0,R_0}$), and 
\begin{equation}\label{eq:LDP_upper_bound_2}
	\dist(0,Re_1) \leq \sum_{i=0}^{\lfloor R/R_0\rfloor -1} X_i + R_0.
\end{equation}
Using \eqref{eq:LDP_upper_bound_1} and \eqref{eq:LDP_upper_bound_2} we deduce that for  every $R > 5R_0/\e$,
\[
\begin{split}
	\prob_u\brk{\dist(0,Re_1) > (\eta(u)+\varepsilon)R} & \leq \prob_u\brk{\sum_{i=0}^{\lfloor R/R_0\rfloor -1} X_i > (\eta(u) + \varepsilon-R_0/R)R}\\
	& \leq \prob_u\brk{\sum_{i=0}^{\lfloor R/R_0\rfloor -1} (X_i-\Exp_u[X_i]) > \brk{\eta(u) + \varepsilon-\frac{R_0}{R} - \frac{\Exp_u[X_1]}{R_0}  }R}\\
	& \leq \prob_u\brk{\sum_{i=0}^{\lfloor R/R_0\rfloor -1} (X_i-\Exp_u[X_i]) > \frac{3}{5}\varepsilon R}.
\end{split}
\]
Applying the function $x\mapsto e^{\beta x}$ (for some $\beta>0$) to both sides in the last term, using the Markov inequality and then the independence of the $X_i$'s, we can bound the last term on the right-hand side by
\begin{equation}\label{eq:LDP_upper_bound_3}
\begin{split}
	e^{-\frac{3\beta\varepsilon}{5} R} \cdot \Exp_u\Brk{e^{\beta\brk{\sum_{i=0}^{\lfloor R/R_0\rfloor -1} (X_i-\Exp_u[X_i])}}}
	& = e^{-\frac{3\beta\varepsilon}{5} R} \cdot \Exp_u\Brk{e^{\beta (p_{0,R_0}-\Exp_u[p_{0,R_0}])}}^{\left\lfloor \frac{R}{R_0}\right\rfloor}\\
	& \leq  e^{-\frac{3\beta\varepsilon}{5} R} \cdot \Exp_u\Brk{e^{ \beta (p_{0,R_0}-\Exp_u[p_{0,R_0}])}}^{\frac{R}{R_0}}.
\end{split}
\end{equation}
Since $\Exp_u\Brk{e^{ \beta (p_{0,R_0}-\Exp_u[p_{0,R_0}])}} \leq \Exp_u\Brk{e^{ \beta p_{0,R_0}}} \leq e^{\beta R_0} <\infty$ we can expand the exponential inside the expectation into a power series in $\beta$ and obtain that 
\[
	\Exp_u\Brk{e^{ \beta (p_{0,R_0}-\Exp_u[p_{0,R_0}])}} = 1+ O(\beta^2).
\]
Hence \eqref{eq:LDP_upper_bound_3} is bounded by $	\exp\brk{\brk{\frac{O(\beta^2)}{R_0} - \frac{3\beta\varepsilon}{5}}R}$. By taking $\beta=\beta(\varepsilon)>0$ small enough we can make the last term exponentially decaying in $R$, thus completing the proof. 
\end{proof1}

\subsection{Proof of Theorem \ref{thm:too_long_Euclidean_distance_is_very_unlikely}}

The proof of Theorem 5.2 also follows ideas of Kesten \cite{Ke84}. Unlike the proof of Theorem 5.1, some additional elements are needed in order to apply to the continuous case. We start with some preliminary results.

\begin{lemma}
\label{lem:num_of_crossing_of_a_finite_box_by_a_true_geodesic}
There exists a constant $C_4 = C_4(d)$ such that for every \revision{$M>1$}, the Euclidean length of every true geodesic contained in a box of side length $M$
is at most $C_4 M^{\revision{d}}$.
\end{lemma}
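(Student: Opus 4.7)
My plan is to split $\gamma$ into its portions in $\calS(\omega)^c$ and in $\calS(\omega)$, and bound each using one of the two minimization properties defining a true geodesic. For the outside portion: since $\gamma$ is a $\dist$-geodesic, any maximal sub-arc of $\gamma$ lying in $\calS(\omega)^c$ must be a straight Euclidean segment, for otherwise it could be replaced by a straight segment strictly decreasing the $\dist$-length. Hence the total Euclidean length of the outside pieces equals the $\dist$-length of $\gamma$, which is at most the $\dist$-diameter of the containing $M$-box, bounded by $\sqrt{d}\,M$.

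For the inside portion: by the Euclidean-minimality condition in the definition of a true geodesic, on each connected component $C_k$ of $\calS(\omega)$ visited by $\gamma$, the restriction $\gamma\cap C_k$ is a shortest Euclidean path between its entry and exit points, so $\len_\euc(\gamma\cap C_k)\le \diam_\euc(C_k)$. The key step will be a purely geometric lemma: for any connected finite union $C$ of closed unit balls in $\R^d$, $\diam_\euc(C)\le C(d)\cdot \textVol(C)$. I would prove it by picking $p,q\in C$ realizing the diameter, together with a chain of overlapping balls $B(c_1,1),\dots,B(c_n,1)\subset C$ (so $|c_i-c_{i+1}|\le 2$) with $p\in B(c_1,1)$ and $q\in B(c_n,1)$, and with total chain-length $L:=\sum_i|c_i-c_{i+1}|$ minimal. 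The triangle inequality gives $\diam_\euc(C)\le L+2$, while the minimality of $L$ rules out close self-approach of the polygonal center-path: two non-consecutive centers at Euclidean distance $\le 2$ could be joined by a single chain-edge, strictly shortening the chain unless the intermediate centers already lie on a common segment. This non-self-approach property lets me show that the radius-$1$ Minkowski tube of the center-path---which is contained in $C$---has volume at least $c(d)\,L$, giving $\textVol(C)\ge c(d)(\diam_\euc(C)-2)$ and hence the claim.

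Since the visited components $C_k$ are pairwise disjoint and contained in the $1$-neighborhood of the $M$-box, which has volume at most $(M+2)^d$, summing yields $\sum_k \len_\euc(\gamma\cap C_k)\le C(d)\sum_k \textVol(C_k)\le C(d)(M+2)^d\le C_4(d)\,M^d$ for $M>1$. Combining with the outside bound gives the desired $\len_\euc(\gamma)\le C_4(d)\,M^d$. The main obstacle will be the geometric lemma---specifically, extracting the linear-in-$L$ lower bound on the tube volume from the shortcut-minimality of the chain, as this requires a careful argument ruling out configurations where the tube self-overlaps significantly despite the minimality constraint.
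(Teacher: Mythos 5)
Your decomposition into the portion of $\gamma$ outside $\calS(\omega)$ (bounded by $\sqrt{d}\,M$ via the $\dist$-length) and the portion inside $\calS(\omega)$ (bounded by a volume count over the visited components) is exactly the paper's strategy, but the inside part contains a genuine gap: the inequality $\len_\euc(\gamma\cap C_k)\le \diam_\euc(C_k)$ is false. The fact that $\gamma\cap C_k$ is a Euclidean-shortest path \emph{within} $C_k$ between its entry and exit points only tells you that its length equals the \emph{intrinsic} distance in $C_k$ between those points, and for a non-convex connected union of unit balls this can exceed the extrinsic diameter by an arbitrarily large factor. For instance, arrange $n$ unit balls along a tight spiral in the plane, with the entry point at the outer end and the exit point at the inner end: the extrinsic diameter is of order $\sqrt{n}$, while the shortest path inside the union from entry to exit must follow the spiral and has length of order $n$. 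So the chain of inequalities $\len_\euc(\gamma\cap C_k)\le \diam_\euc(C_k)\le C(d)\,\textVol(C_k)$ breaks at its first link, even though the geometric lemma $\diam_\euc(C)\le C(d)\,\textVol(C)$ you propose is itself true (and turns out not to be what is needed).

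The repair is local, and your own minimal-chain machinery, aimed at the right pair of points, supplies it --- this is in fact the paper's proof. Build the chain not between two diameter-realizing points of $C_k$ but from the ball containing the entry point to the ball containing the exit point: one obtains centers $x^1,\dots,x^{k}$ in $\supp(\omega)\cap C_k$ with $|x^j-x^{j+1}|\le 2$ for consecutive indices and $|x^j-x^l|>2$ otherwise. The polygonal path through these centers stays in $C_k$ (each edge of length $\le 2$ lies in the union of the two balls it joins) and connects entry to exit with length at most $2k$, so minimality of the true geodesic gives $\len_\euc(\gamma\cap C_k)\le 2k$ directly, bypassing the diameter. The unit balls centered at every other $x^j$ are pairwise disjoint, and remain disjoint across distinct components, so a packing bound in a fixed neighborhood of the box yields $\sum_k k\le C(d)M^d$ and hence the claimed $C_4 M^d$ bound.
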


\begin{proof}
\revision{
Denote the box by $B_M$.
Let $\omega\in\Omega$ be a configuration, and let $\gamma$ be a true geodesic contained in $B_M$.
Since $\len(\gamma)\le \diam_\euc(B_M) = \sqrt{d}M$, the Euclidean length that $\gamma$ acquires in $B_M\setminus \calS(\omega)$ is at most $\sqrt{d}M$.
Therefore, it suffices to show that the Euclidean length that $\gamma$ acquires in $\calS(\omega) \cap B_M$ is $O(M^{d})$.

Let $\{A_i\}_{i=1}^N$ be the connected components of $\calS(\omega)\cap B_M$ which $\gamma$ intersects.
Note that $\gamma\cap A_i$ is connected (otherwise it would not be a true geodesic), so there are well-defined entry and exit points to $\gamma\cap A_i$.
Denote by $x_i^I\in A_i$ (resp.~$x_i^L\in A_i$) the point in $\supp(\omega)$, from which the entry point (resp.~exit point) of $\gamma\cap A_i$ is of distance at most $1$.
Let $x_i^I=x_i^1,x_i^2,\ldots, x_i^{k_i}=x_i^L$ be points in $\supp(\omega)\cap A_i$, such that $|x_i^j-x_i^k|\le 2$ if $|k-j|\le 1$, $|x_i^j-x_i^k|>2$ if $|k-j|>1$. 
We can always find such points by taking $\supp(\omega)\cap A_i$ and omitting points.
By construction, $x_i^j\in A_i$ for every $j$, and therefore the length $\gamma$ acquires in $A_i$ is at most $2k_i$ (the length of connecting $x_i^j$ with $x_i^{j+1}$ plus the length of connecting the entry and exit points with $x_i^I$ and $x_i^L$).

The length $\gamma$ acquires in $\calS(\omega)\cap B_M$ is therefore $2k_1+\ldots+2k_N$.
However, by construction, the unit balls centered at $\{x_i^{2j}\}_{i\le N, 2j\le k_i}$ are mutually disjoint. 
By a volume consideration, the number of disjoint unit balls in $B_M$ is at most $\kappa_d M^d$, where $\kappa_d$ is the volume of the Euclidean unit ball in $\R^d$.
Therefore $k_1+\ldots+k_N\le 2\kappa_d M^d$, and so the total length $\gamma$ acquires in $\calS(\omega)\cap B_M$ is bounded by $4\kappa_d M^d$.
}
%
\end{proof}

We also recall the following variant of a result by Roy \cite{Ro90}; see also \cite[Lemma 3.3]{MR96}:

\begin{theorem}[\cite{Ro90}]
\label{thm:diam_of_clusters} 
For $x\in\R^d$ denote 
\[
\calS(x;\omega) = \left\{\begin{array}{ll}
	\text{the connected component of }x \text{ in } \calS(\omega), & \quad x\in \calS(\omega)\\
	\emptyset, & \quad x\notin \calS(\omega)
\end{array}\right.
\]
Then, there exist for every $u<u_*$ positive constants $C,c$ depending only on $u$ and $d$ such that 
\[
\prob_u(\diam_\euc(\calS(0;\omega))>t)\leq Ce^{-ct}.
\]
\end{theorem}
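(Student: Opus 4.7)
The plan is to apply the Menshikov--Aizenman--Barsky strategy for subcritical percolation, adapted to the continuum Boolean model by Roy~\cite{Ro90}; see also \cite[Chapter~3]{MR96}. The proof would proceed in two main steps: exponential decay of a one-arm probability, followed by conversion into the diameter bound.

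First, I would establish exponential decay for the one-arm probability
\[
q_n(u):=\prob_u\brk{\calS(0;\omega)\text{ intersects }\R^d\setminus B(0,n)}.
\]
The key tool is Russo's formula for Poisson point processes, which identifies $\partial_u \prob_u(A)$, for an increasing event $A$, as the expected number of ``pivotal'' Poisson points---points whose addition to $\omega$ causes $A$ to occur. Combining Russo's formula with the BK inequality (Theorem~\ref{thm:BK_inequality}), applied to the disjoint-occurrence decomposition of the one-arm event through its pivotal points, yields a Menshikov-type differential inequality for $q_n(u)$. Integrating this inequality on $[u,\bar u]$ for some fixed $\bar u \in (u,u_*)$, and using the absence of an infinite cluster throughout the subcritical phase (Lemma~\ref{lem:Basic_properties_of_M_R}) to ensure $q_n(\bar u)\to 0$ as $n\to\infty$, one obtains $q_n(u)\leq Ce^{-cn}$ with $C,c>0$ depending only on $u$ and $d$.

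Second, I would convert exponential decay of $q_n$ into the diameter bound. If $\diam_\euc(\calS(0;\omega))>t$, then the cluster contains two points at Euclidean distance greater than $t$, so by the triangle inequality at least one of them lies outside $B(0,t/2)$. Hence the event $\{\diam_\euc(\calS(0;\omega))>t\}$ is contained in the event $\{\calS(0;\omega)\text{ intersects }\R^d\setminus B(0,t/2)\}$, which gives $\prob_u(\diam_\euc(\calS(0;\omega))>t)\leq q_{t/2}(u)\leq Ce^{-ct/2}$, precisely the claimed estimate (with a slightly smaller constant).

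The main obstacle is the first step: the Menshikov-type differential inequality in the continuum setting requires careful handling of pivotal Poisson points, where the addition of a single point can drastically alter connectivity by merging previously disjoint clusters. This delicate analysis is exactly what Roy~\cite{Ro90} carries out for the Boolean model, so in practice one may simply invoke his theorem for the variant needed here; once the one-arm decay is in hand, the second step is elementary.
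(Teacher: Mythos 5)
The paper does not prove this statement at all---it is quoted as a known result, with the proof deferred to Roy~\cite{Ro90} and to \cite[Lemma 3.3]{MR96}---and your proposal ultimately defers to the same citation, so you are consistent with the paper. Your sketch of the underlying argument (a Menshikov-type differential inequality via Russo's formula for pivotal Poisson points combined with the BK inequality to get exponential decay of the one-arm probability, followed by the elementary observation that $\{\diam_\euc(\calS(0;\omega))>t\}$ forces the cluster of the origin to reach outside $B(0,t/2)$) is an accurate description of how the cited result is established, and the reduction in your second step is correct.
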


In order to prove Theorem \ref{thm:too_long_Euclidean_distance_is_very_unlikely}, as well as for future use, we will need a stronger version of Theorem \ref{thm:diam_of_clusters}:
\begin{lemma}
\label{lem:diam_of_clusters_2}
Let
\[
W = \bigcup_{x\in [0,1]^d} \calS(x;\omega). 
\]
Then, for every $u<u_*$ there exist positive constants $C_5,c_6$ depending only on $u$ and $d$, such that 
\[
\prob_u(\diam_\euc(W)>t)\leq C_5e^{-c_6t}.
\]	
\end{lemma}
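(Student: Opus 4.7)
The plan is to replace the uncountable union defining $W$ by a \emph{finite} deterministic family of clusters, and then apply Theorem~\ref{thm:diam_of_clusters} via a union bound together with translation invariance of $\prob_u$. The key device is a sufficiently fine deterministic grid that ``witnesses'' every cluster meeting $[0,1]^d$.

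I would first set $n=\lceil\sqrt{d}\rceil$ and $V:=\tfrac{1}{n}\bbZ^d\cap [-2,3]^d$. Then $|V|$ depends only on $d$, and since the covering radius of $\tfrac{1}{n}\bbZ^d$ equals $\sqrt{d}/(2n)\le 1/2$, every closed Euclidean ball of radius $1$ centered in $[-1,2]^d$ contains at least one point of $V$. Now let $C$ be any connected component of $\calS(\omega)$ with $C\cap[0,1]^d\neq\emptyset$. Because $C$ is a union of closed unit balls $\overline{B(p_i,1)}$ with $p_i\in\supp(\omega)$, at least one such $p_i$ must lie in $[-1,2]^d$. By the preceding observation, $\overline{B(p_i,1)}\subset C$ contains some $v\in V$; since $v\in C\subset \calS(\omega)$, this forces $\calS(v;\omega)=C$. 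Consequently
\[
W\;\subset\; \bigcup_{v\in V}\calS(v;\omega).
\]

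Next, given $y_1,y_2\in W$, pick $v_1,v_2\in V$ with $y_i\in\calS(v_i;\omega)$; the triangle inequality combined with $|v_1-v_2|\le \diam_\euc([-2,3]^d)=5\sqrt{d}$ yields
\[
|y_1-y_2| \;\le\; \diam_\euc(\calS(v_1;\omega))+5\sqrt{d}+\diam_\euc(\calS(v_2;\omega)),
\]
so $\diam_\euc(W)\le 5\sqrt{d}+2\max_{v\in V}\diam_\euc(\calS(v;\omega))$. Thus for $t>5\sqrt{d}$ and $s:=(t-5\sqrt{d})/2$,
\[
\prob_u(\diam_\euc(W)>t)\;\le\; \sum_{v\in V}\prob_u(\diam_\euc(\calS(v;\omega))>s).
\]
By translation invariance of $\prob_u$, each summand equals $\prob_u(\diam_\euc(\calS(0;\omega))>s)$, which Theorem~\ref{thm:diam_of_clusters} bounds by $C e^{-cs}$. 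Summing and absorbing the trivial case $t\le 5\sqrt{d}$ into the constant yields the desired estimate $\prob_u(\diam_\euc(W)>t)\le C_5 e^{-c_6 t}$.

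The only genuine obstacle is the choice of the witnessing set $V$: one must ensure that every cluster of $\calS(\omega)$ intersecting $[0,1]^d$ contains at least one deterministic point of $V$, while keeping $|V|$ bounded in terms of $d$ alone. The grid mesh $1/\lceil\sqrt{d}\rceil$ does this precisely because it is small enough that each defect ball necessarily captures a grid point, so every cluster with a defect center in $[-1,2]^d$ is accounted for. With $V$ in hand, the remainder is a routine union bound and an appeal to the already-established single-cluster tail bound, so no additional probabilistic input (such as Palm calculus or a second BK argument) is needed.
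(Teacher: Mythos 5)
Your proof is correct and follows essentially the same route as the paper's: replace the uncountable union by a finite deterministic grid of witness points (every unit defect ball must capture a grid point, so every cluster meeting $[0,1]^d$ is $\calS(v;\omega)$ for some $v$ in the finite set), then conclude by a union bound, translation invariance, and the single-cluster tail bound of Theorem~\ref{thm:diam_of_clusters}. If anything, your choice of mesh $1/\lceil\sqrt{d}\rceil$ is more careful than the paper's $\tfrac{1}{2}\bbZ^d$, whose covering radius exceeds $1$ in high dimension.
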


\revision{
\begin{proof}
Observe that every unit ball $\overline{B(x,1)}$ for $x\in\R^d$ contains a point in the grid $\frac{1}{2}\bbZ^d$. Therefore, each connected component in $W$ must contain a point from the set
\[	
	L=\left\{q\in \frac{1}{2}\bbZ^d ~:~ |q|_2 <2\right\}
\]
whose size is finite (and depends only on $d$). 

Assume that $\diam_\euc(W)>t$. Then, there exists a connected component whose Euclidean diameter is at least $(t/2-\sqrt{d})$. By Theorem \ref{thm:diam_of_clusters}, 
\[
\prob_u(\diam_\euc(W)>t)\leq \sum_{x\in L}\prob_u(\diam_\euc(\calS(x)) >t/2-\sqrt{d})\leq Ce^{-ct}.
\]
\end{proof}
}

%

\begin{proof1}{Proof of Theorem \ref{thm:too_long_Euclidean_distance_is_very_unlikely}} 
The proof follows by an argument very similar to the one used in the proof of Theorem \ref{thm:Chemical_distance}. We start with Part (1), showing that there exist constants $\alpha$, $C_2$ and $c_3$, such that for every $R>0$,
\[
\prob_u\brk{\begin{array}{c} \exists \text{ a true geodesic path starting at } 0\\ \text{ such that } \len_\euc(\gamma)\geq R \text{ and } \len(\gamma)<\frac{1}{\alpha} \len_\euc(\gamma) \end{array}}\leq C_2e^{-c_3R}.
\]

First observe that 
\[
\begin{aligned}
& \prob_u\brk{\begin{array}{c} \exists \text{ a true geodesic path starting at } 0\\ \text{ such that } \len_\euc(\gamma)\geq R \text{ and } \len(\gamma)<\frac{1}{\alpha} \len_\euc(\gamma) \end{array}}\\
\leq & \sum_{S=\lfloor R\rfloor}^\infty \prob_u\brk{\begin{array}{c} \exists \text{ a true geodesic path starting at } 0\\ \text{ such that } \len_\euc(\gamma)\in [S,S+1] \text{ and } \len(\gamma)<\frac{1}{\alpha} (S+1) \end{array}}
\end{aligned}
\]
so it is enough to show that there exist constants $\alpha,C,c_3$ such that for every natural number $S$
\[
\prob_u\brk{\begin{array}{c} \exists \text{ a true geodesic path starting at } 0\\ \text{ such that } \len_\euc(\gamma)\in [S,S+1] \text{ and } \len(\gamma)<\frac{1}{\alpha} (S+1) \end{array}}\leq Ce^{-c_3S}.
\]

Fix $S\in\bbN$ and let $\gamma$ be a true geodesic path starting at the origin such that $\len_\euc(\gamma)\in [S,S+1]$. Fix $M\in\bbN$ and $N=1$ and define the sequences $t_q$ and $x_q$ , $q=1,\dots,Q$ in the same way as in the proof of Proposition \ref{prop:LD_key_prop}. 
By construction, each of the segments $\gamma|_{[t_q,t_{q+1}]}$ is contained in a box of side length $M+1$. By Lemma \ref{lem:num_of_crossing_of_a_finite_box_by_a_true_geodesic} the Euclidean length of a true geodesic in each of the boxes is at most $C_4(M+1)^\revision{d}$, hence
\[
Q \geq \frac{S}{C_4 (M+1)^\revision{d}}-1.
\] 

By repeating the argument of Proposition \ref{prop:LD_key_prop} we get that for every $\beta \geq 0$
\[
\begin{split}
& \prob_u\brk{\begin{array}{c} \exists \text{ a true geodesic path starting from } 0\\ \text{ such that } \len_\euc(\gamma)\in[S,S+1] \text{ and } \len(\gamma)<\frac{1}{\alpha} (S+1) \end{array}} \\
&\qquad \leq  \sum_{Q \geq \frac{S}{C_4(M+1)^\revision{d}}-1} \brk{2d\brk{16M}^d}^Q \prob_u\brk{\sum_{q=0}^{Q-1} X_q(M,1) <\frac{1}{\alpha} (S+1)}	\\
&\qquad \leq  \sum_{Q \geq \frac{S}{C_4(M+1)^\revision{d}}-1} \brk{2d\brk{16M}^d}^Q e^{ \beta (S+1)/\alpha}\Exp_u\Brk{\prod_{q=0}^{Q-1} e^{-\beta X_q(M,1)}}\\
&\qquad =  e^{ \beta (S+1)/\alpha}\cdot \sum_{Q \geq \frac{S}{C_4(M+1)^\revision{d}}-1} \brk{2d\brk{16M}^d}^Q \Exp_u\Brk{ e^{-\beta X_1(M,1)}}^Q.\\
\end{split}
\]
With $W$ as in Lemma \ref{lem:diam_of_clusters_2}, 
\[
	\prob_u\brk{\diam_\euc(W)>\frac{M}{2}}\leq C_5e^{-c_6M/2}.
\]
Each connected component of $\calS$ is composed $\prob_u$-a.s.~of finitely many balls; see Lemma \ref{lem:Basic_properties_of_M_R}. Hence  connected components of $\calS$, as well as $W$, are compact sets. Therefore, one can find \revision{a small enough} $\delta=\delta(M)>0$ such that 
\[
	\prob_u(\dist(W,\calS\setminus W)\geq \delta(M)) \geq 1-C_5e^{-c_6M/2}.
\]
It follows that with $\prob_u$-probability at least $1-2C_5e^{-c_6M/2}$ the set $W$ is contained within the box $\Brk{-M/2,M/2 \revision{+1}}^d$ and is at a distance at least $\delta(M)$ from any other cluster of $\calS$, which in particular implies that $e^{-\beta X_1(M,1)} \leq e^{-\beta\delta}$.
Consequently, 
\[
\Exp_u[e^{-\beta X_1(M,1)}] \leq e^{-\beta \delta(M)} + 2C_5e^{-c_6M/2}
\]
and thus 
\[
\begin{split}
& \prob_u\brk{\begin{array}{c} \exists \text{ a true geodesic path starting at } 0\\ \text{ such that } \len_\euc(\gamma)\in[S,S+1] \text{ and } \len(\gamma)<\frac{1}{\alpha} (S+1) \end{array}} \\
&\qquad \leq  e^{\beta (S+1)/\alpha} \sum_{Q\geq \frac{S}{C_4(M+1)^{\revision{d}}}-1} \brk{2d(16M)^d}^Q  \brk{e^{-\beta \delta(M)} + 2C_5e^{-c_6M/2}}^Q.
\end{split}
\]
Taking $M$ large enough so that $2d(16M)^d \cdot 4C_5 e^{-c_6M/2}< \frac{1}{4}$, and then $\beta$ large enough so that $e^{-\beta \delta(M)} \leq 2C_5e^{-c_6M/2}$, and finally $\alpha$ large enough so that $e^{ \beta/\alpha } < 2^{\frac{1}{C_4(M+1)^\revision{d}}}$ gives 
\[
\prob_u\brk{\begin{array}{c} \exists \text{ a true geodesic path starting from } 0\\ \text{ such that } \len_\euc(\gamma)\in[S,S+1] \text{ and } \len(\gamma)<\frac{1}{\alpha} (S+1) \end{array}} \leq e^{\beta/\alpha}\brk{\frac{1}{2}}^{\frac{\revision{S}}{C_4(M+1)^\revision{d}}},
\]
proving Part (1) of Theorem \ref{thm:too_long_Euclidean_distance_is_very_unlikely}.

We next prove Part (2), namely that for every $x,y\in \R^d$ and for every $R>\alpha|x-y|$
\[
\prob_u\brk{\exists \gamma \in \Gamma_0(x,y)~:~\len_\euc(\gamma)> R }\leq C_2e^{-c_3R}.
\]

By the invariance of the measure $\prob_u$ under translations and rotations, it is sufficient to prove the result for the case $x=0$, $y=Se_1$ for some $S>0$. If $\gamma\in\Gamma_0(0,Se_1)$ is a path such that $\len_\euc(\gamma)>R>\alpha S$, then we can find times $t_q$ and points $x_q$, $q=0,\dots,Q$, as in Proposition \ref{prop:LD_key_prop} (with $N=1$), with $Q>{\alpha S}(M+1)-1$. Since $\len(\gamma) =\dist(0,Se_1) \leq S$ the result follows by the same argument used to prove Part (1). 
\end{proof1}

\section{Further result for uniform point distributions}
\label{sec:concentration_results}

In this section we exploit the results obtained in the previous two sections to prove more results for  uniform point distributions. These include the geometric concentration of geodesics and further properties of the function $\eta(u)$. 

\subsection{Geometry of geodesics}

In order to obtain results on the limiting distance in the setting of general intensity measures, we will need control over the geometry of geodesics, or approximate geodesics,  connecting pairs of points. For $x,y\in\R^d$, we call a path $\gamma$ between $x$ and $y$ an $\varepsilon$-geodesic if 
\beq
	\len(\gamma)-\dist(x,y) \leq \varepsilon|x-y|.
\eeq
We denote by $\Gamma_\varepsilon(x,y)$ the set of $\varepsilon$-geodesics which are also Euclidean geodesics inside $\calS$. In particular, $\Gamma_0(x,y)$ is the set of true geodesics between $x$ and $y$ defined in Section \ref{sec:large_deviations}.  

The goal of this subsection is to show that there exist geodesics that do not deviate significantly from Euclidean segments:

\begin{proposition}
\label{pn:concentration_est}
Let $u\in [0,u_*)$. There exist $C_7,c_8>0$ such that for every $\varepsilon>0$ and sufficiently large (depending on $\e$) $|x-y|$, 
\beq\label{eq:prop_distanc_of_almost_geodesic_from_segment}
	\prob_u\brk{\forall \gamma \in \Gamma_\varepsilon(x,y) ~:~ d_H(\gamma,[x,y])>\varepsilon|x-y|} \leq C_7 e^{-c_{8}|x-y|^{1/2}},
\eeq
where $[x,y]$ is the linear segment connecting $x$ and $y$. 
\end{proposition}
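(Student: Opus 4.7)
The plan is to construct an explicit $\varepsilon$-geodesic from $x$ to $y$ that stays near the Euclidean segment, by chaining together many short true geodesics. Write $R=|x-y|$, choose an integer $K=K(R)$ whose exact value will be fixed later, and let $x=z_0,z_1,\ldots,z_K=y$ be equally spaced points along $[x,y]$, so that each piece has Euclidean length $\ell=R/K$. For each $i$ pick a true geodesic $\gamma_i\in\Gamma_0(z_i,z_{i+1})$ and let $\gamma$ be their concatenation. The plan is to show that, with the desired probability, $\gamma$ is both an $\varepsilon$-geodesic and stays within Hausdorff distance $\varepsilon R$ of $[x,y]$.

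For the length, apply \thmref{thm:Chemical_distance} with tolerance $\varepsilon/4$ to each pair $(z_i,z_{i+1})$ (using translation invariance of $\prob_u$): with probability at least $1-K e^{-c_1(\varepsilon/4)\,\ell}$ every $\dist(z_i,z_{i+1})\le (\eta(u)+\varepsilon/4)\ell$, so $\len(\gamma)=\sum_i \dist(z_i,z_{i+1})\le (\eta(u)+\varepsilon/4)R$. Simultaneously, \thmref{thm:Chemical_distance} applied to the pair $(x,y)$ gives $\dist(x,y)\ge (\eta(u)-\varepsilon/4)R$ with probability at least $1-e^{-c_1(\varepsilon/4)R}$. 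Combining the two yields $\len(\gamma)-\dist(x,y)\le (\varepsilon/2)R<\varepsilon R$, so $\gamma\in\Gamma_\varepsilon(x,y)$.

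For the Hausdorff bound, apply \thmref{thm:too_long_Euclidean_distance_is_very_unlikely}(2) to each pair $(z_i,z_{i+1})$: each true geodesic $\gamma_i$ satisfies $\len_\euc(\gamma_i)\le\alpha\ell$ with probability at least $1-C_2 e^{-c_3\alpha\ell}$, where $\alpha$ is the constant from that theorem. Since $\gamma_i$ starts at $z_i$, it is contained in the Euclidean ball of radius $\alpha\ell$ around $z_i$, hence $d_H(\gamma_i,[z_i,z_{i+1}])\le\alpha\ell$. A union bound gives $d_H(\gamma,[x,y])\le\alpha\ell$.

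It remains to choose $K$. Taking $\ell=\sqrt{R}$ (so $K=\lceil\sqrt{R}\rceil$) makes the Hausdorff deviation $\alpha\sqrt{R}\le\varepsilon R$ once $R\ge(\alpha/\varepsilon)^2$, which is exactly the ``sufficiently large depending on $\varepsilon$'' clause. The total failure probability from the three union bounds is bounded by $C\sqrt{R}\cdot e^{-c(\varepsilon)\sqrt{R}}+e^{-c(\varepsilon)R}$, which for $R$ sufficiently large (again depending on $\varepsilon$) is dominated by $C_7 e^{-c_8\sqrt{R}}$ with $c_8$ a fixed positive constant. The main subtlety to watch for is that the large-deviation constants in \thmref{thm:Chemical_distance} and \thmref{thm:too_long_Euclidean_distance_is_very_unlikely} depend on $\varepsilon$; choosing the scale $\ell=\sqrt{R}$ (rather than $\ell$ of order $R$) gives enough slack to absorb this dependence into the threshold on $R$ while leaving a uniform exponent $c_8$ in the bound. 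This is the price we pay for the $\sqrt{R}$ (rather than $R$) rate in the conclusion.
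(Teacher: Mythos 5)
Your construction is the same as the paper's (concatenate true geodesics between equally spaced points of $[x,y]$, control the total length via \thmref{thm:Chemical_distance} and the Euclidean excursions via \thmref{thm:too_long_Euclidean_distance_is_very_unlikely}(2)), but the choice of scale $\ell=\sqrt{R}$ breaks the final probability estimate, and this is a genuine gap. The term coming from \thmref{thm:Chemical_distance} applied at separation $\ell=\sqrt{R}$ is $K\,e^{-c_1(\varepsilon/4)\sqrt{R}}$ with $c_1$ depending on $\varepsilon$. The proposition requires $C_7,c_8$ to be chosen \emph{before} $\varepsilon$, and an inequality of the form $\sqrt{R}\,e^{-c_1(\varepsilon)\sqrt{R}}\le C_7 e^{-c_8\sqrt{R}}$ cannot be arranged by taking $R$ large when $c_1(\varepsilon)<c_8$: the ratio of the left side to the right side then tends to infinity with $R$. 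So the slack you invoke goes the wrong way --- increasing $R$ does not absorb an $\varepsilon$-dependent coefficient that sits in front of the \emph{same} power $\sqrt{R}$ in the exponent. The whole reason the proposition is stated with exponent $|x-y|^{1/2}$ rather than $|x-y|$ is the opposite trade: a bound $e^{-c(\varepsilon)R}$ \emph{can} be dominated by $e^{-c_8\sqrt{R}}$ with $\varepsilon$-independent $c_8$ once $R\ge (c_8/c(\varepsilon))^2$.

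The fix is to subdivide into a number of pieces that is fixed in $R$ (this is what the paper does): take $N=\lceil 20\alpha/\varepsilon\rceil$ points, so each piece has Euclidean length $R/N=\Theta(R)$. Then every tail bound you use --- the $N$ applications of \thmref{thm:Chemical_distance} at scale $R/N$, the one at scale $R$, and the $N$ applications of \thmref{thm:too_long_Euclidean_distance_is_very_unlikely}(2) with threshold $\tfrac{\varepsilon}{10}R>\alpha R/N$ --- decays like $e^{-c(\varepsilon,u,d)R}$, and the sum is $\le C_7e^{-c_8 R^{1/2}}$ for all $R$ large depending on $\varepsilon$, with $C_7,c_8$ depending only on $d$ and $u$. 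With that modification (and the Hausdorff bound $d_H(\gamma_k,[z_k,z_{k+1}])\le \tfrac{\varepsilon}{10}R\le\varepsilon R$ now coming from the threshold in \thmref{thm:too_long_Euclidean_distance_is_very_unlikely}(2) rather than from $\alpha\ell$ being small), the rest of your argument goes through verbatim.
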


\begin{proof}
We follow the proof of  \cite[Proposition 3.2]{BLPR15}. Fix $\e>0$ and $x,y\in\R^d$. We will show that if $|x-y|$ is large enough, then there exists with $\prob_u$-probability $\geq 1-C_7e^{-c_{8}|x-y|}$ a curve $\gamma\in\Gamma_\e(x,y)$ satisfying 
\[
d_H(\gamma,[x,y])\leq\varepsilon|x-y|.
\]

Let \revision{$N=\lceil 20\alpha/\varepsilon\rceil$}, with $\alpha$ as in Theorem \ref{thm:too_long_Euclidean_distance_is_very_unlikely}. Define the sequence of vertices $z_k$, $k=0,\dots,N$, along $[x,y]$ by 
\[
	z_k = \brk{1-\frac{k}{N}} x + \frac{k}{N} y,\quad \forall 0\leq k\leq N.
\] 
For $0\leq k\leq N-1$, let $\gamma_k=\gamma_k(\omega)\in \Gamma_0(z_k,z_{k+1})$ be a true geodesic connecting $z_k$ and $z_{k+1}$ and define $\gamma = (\ldots (\gamma_0 * \gamma_1) * \ldots *) *\gamma_{N-1})$ to be their concatenation, connecting $x$ and $y$. 

Since $|z_k-z_{k+1}| = |x-y| / N$ is of order $|x-y|$ it follows from Theorem \ref{thm:Chemical_distance} that once $|x-y|$ is large enough
\begin{equation}\label{eq:geometric_concentration_1}
	|\dist(z_k,z_{k+1}) - \eta(u)\cdot |z_k-z_{k+1}| | \leq \frac{\varepsilon}{2}|z_k - z_{k+1}|, \quad \forall 0\leq k \leq N-1,
\end{equation}
with $\prob_u$-probability at least $1- Ne^{-c_1|x-y|}$. 
Similarly, whenever $|x-y|$ is large enough we have with $\prob_u$-probability at least $1-e^{-c_1|x-y|}$ that 
\begin{equation}\label{eq:geometric_concentration_2}
	|\dist(x,y)- \eta(u)\cdot |x-y|| \leq \frac{\varepsilon}{2} |x-y|.
\end{equation}
Consequently, under the events in \eqref{eq:geometric_concentration_1} and \eqref{eq:geometric_concentration_2},
\begin{equation}
\begin{aligned}
	\len(\gamma)  & \leq \sum_{k=0}^{N-1} \len(\gamma_k)  = \sum_{k=0}^{N-1} \dist(z_k,z_{k+1}) \\ 
 & \leq  \sum_{k=0}^{N-1} \brk{\eta(u)\cdot |z_k-z_{k+1}| + \frac{\varepsilon}{2}|z_k-z_{k+1}|}\\
 & = \eta(u)\cdot |x-y|  + \frac{\varepsilon}{2} |x-y| \\ 
 & \leq \dist(x,y) + \varepsilon|x-y|,
\end{aligned}
\end{equation}
which implies that $\gamma \in \Gamma_\varepsilon(x,y)$. 
Finally, by Theorem \ref{thm:too_long_Euclidean_distance_is_very_unlikely}(2) we have with $\prob_u$-probability at least $1-C_2Ne^{-c_3|x-y|}\geq 1- C_7e^{-c_8|x-y|^{1/2}}$, assuming $|x-y|$ is sufficiently large.  
that 
\begin{equation}
	\len_\euc(\gamma_k) \leq \frac{\varepsilon}{10} |x-y| ,\quad \forall 0\leq k\leq N-1
\end{equation}
which implies that for every $0\leq k\leq N-1$, 
\begin{equation}
	\gamma_k \subset [z_k,z_{k+1}] + B\brk{0,\frac{\varepsilon}{5} |x-y|},
\end{equation}
and thus 
\begin{equation}
	\gamma \subset [x,y] +B\brk{0,\frac{\varepsilon}{5}|x-y|},
\end{equation} 
thus proving the existence of the required $\varepsilon$-geodesic.
\end{proof}


\subsection{An upper bound on $\eta(u)$}
\label{sec:eta_upper_bound}

The goal of this subsection is to prove an upper bound on the function $\eta(u)$:

\begin{proposition}
\label{prop:upper_bound_on_eta_u}
For every $u\in [0,u_*)$ 
\[
\eta(u)\leq e^{-u\kappa_d}.
\]
\end{proposition}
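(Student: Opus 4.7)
The plan is to exploit the fact that when $\xi=0$, traveling through $\calS(\omega)$ is free, so upper-bounding $\eta(u)$ reduces to computing the expected "effective" length of the straight Euclidean segment from $0$ to $Re_1$.

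First I would write down, for an arbitrary $\omega\in\Omega$, the length of the straight segment $\gamma(t)=te_1$, $t\in[0,R]$, measured with respect to the semi-metric $\g_1$. Since $\g_1$ vanishes on $\calS(\omega)$ and equals $\euc$ elsewhere,
\[
\len(\gamma;\omega) = \int_0^R \ind_{te_1\notin \calS(\omega)}\,dt.
\]
Consequently, by the definition of $\dist$,
\[
\dist(0,Re_1;\omega)\le \int_0^R \ind_{te_1\notin \calS(\omega)}\,dt.
\]

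Next I would take expectations and apply Fubini. For each fixed $t$, the event $\{te_1\notin \calS\}$ is precisely $\{\omega(B(te_1,1))=0\}$, which, for a Poisson point process of intensity $u\cdot \Leb_d$, has probability $e^{-u\kappa_d}$ (translation invariance plus the void probability formula for Poisson processes). Therefore
\[
\Exp_u[\dist(0,Re_1)] \le \int_0^R \prob_u(te_1\notin \calS)\,dt = Re^{-u\kappa_d}.
\]

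Finally I would divide by $R$ and let $R\to\infty$. By the $L^1$ convergence part of \thmref{thm:distances_in_M_R} (in its rephrased form \thmref{thm:distance_in_the_plane}), $\Exp_u[\dist(0,Re_1)]/R\to \eta(u)$, and the bound $\eta(u)\le e^{-u\kappa_d}$ follows. The argument is essentially a one-line Fubini computation once one recognizes that the straight-line path is admissible and that its $\g_1$-length only accumulates outside $\calS$; there is no serious obstacle, and no concentration or large-deviation input is needed beyond the $L^1$ convergence already established.
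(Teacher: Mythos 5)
Your proposal is correct and follows essentially the same route as the paper's proof: bound $\dist(0,Re_1)$ by the $\g_1$-length of the straight segment, note that $\prob_u(x\notin\calS)=e^{-u\kappa_d}$ by the Poisson void probability, apply Fubini, and pass to the limit using the $L^1$ convergence of Theorem \ref{thm:distance_in_the_plane}. The only difference is the trivial reparametrization of the segment.
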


\begin{proof}
First observe that $x\in\calS(\omega)$ if and only if there is a point $y\in \supp(\omega)\cap \overline{B(x,1)}$. Since the the number of points in the support of $\omega$ inside $\overline{B(x,1)}$ is distributed like a Poisson random variable with parameter $u \kappa_d$ it follows that  
\[
	\prob_u(x\notin \calS(\omega)) = e^{-u\kappa_d}. 
\]

For $R>0$ let $\gamma_R:[0,1]\to\R^d$ denote the path $\gamma_R(t) = tRe_1$. Then
\[
\frac{\dist(0,Re_1)}{R}\leq \frac{\len(\gamma_R)}{R} = \frac{1}{R}\int_0^R \ind_{xe_1\notin \calS(\omega)}\Leb_1(dx).
\]
Taking expectation on both sides and using Fubini's theorem,
\[
\frac{\Exp_u[\dist(0,Re_1)]}{R} \leq \frac{1}{R}\int_0^R e^{-u\kappa_d} \Leb_1(dx) = e^{-u\kappa_d}.
\]
Taking the limit $R\to\infty$ and using Theorem \ref{thm:distance_in_the_plane} gives $\eta(u)\leq e^{-u\kappa_d}$. 
\end{proof}

\revision{
\begin{remark}
	In fact, one can show that the bound $e^{-u\kappa_d}$ is not tight, and that $\eta(u)<e^{-u\kappa_d}$ actually holds, by considering several paths instead of one as follows.
	Take large balls around $0$ and $Re_1$ (whose radii are independent of $R$).
	An $<e^{-u\kappa_d}$-bound is then obtained by considering several paths from $0$ to $Re_1$, which are at distance $>2$ from each other outside these balls. 
\end{remark}
}

\subsection{Continuity of $\eta(u)$}

\begin{proposition}
\label{prop:continuity_of_eta_u}
The function $\eta:[0,u_*)\to (0,1]$ is continuous. In addition, $\eta$ is monotonically decreasing in $u$ and $\eta(0)=1$. 
\end{proposition}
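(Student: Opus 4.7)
The plan is to verify the three assertions separately: $\eta(0)=1$, monotonicity, and continuity.

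For $\eta(0)=1$: under $\prob_0$ there are almost surely no points in $\supp(\omega)$, so $\calS(\omega)=\emptyset$ and $\dist(0,Re_1)=R$ for every $R$. Passing to the limit in Theorem \ref{thm:distance_in_the_plane} gives $\eta(0)=1$.

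For monotonicity, I would use the standard thinning coupling of Poisson processes: for $u_1<u_2$, realize $\omega_{u_2}$ as $\omega_{u_1}\cup\omega'$, where $\omega'$ is an independent Poisson process of intensity $u_2-u_1$. Then $\supp(\omega_{u_1})\subseteq\supp(\omega_{u_2})$ so $\calS(\omega_{u_1})\subseteq\calS(\omega_{u_2})$, and by the definition \eqref{eq:defn_dist2} of $\dist$, we have $\dist_{u_2}(x,y;\omega_{u_2})\le \dist_{u_1}(x,y;\omega_{u_1})$ almost surely for every $x,y$. Applying Theorem \ref{thm:distance_in_the_plane} yields $\eta(u_2)\le\eta(u_1)$.

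For continuity, I would proceed in two stages. First, for each fixed $n\ge1$, the function $f_n(u):=\Exp_u[\dist(0,ne_1)]/n$ is continuous in $u$. Indeed, under the coupling above, for $u_k\to u$ the configurations $\omega_{u_k}$ and $\omega_u$ agree on any fixed bounded set with probability tending to one (the extra/missing points form a Poisson process of vanishing intensity), and since any $\omega_u$-geodesic from $0$ to $ne_1$ can be taken inside a bounded region by Theorem \ref{thm:too_long_Euclidean_distance_is_very_unlikely}, we get $\dist_{u_k}(0,ne_1)\to\dist_u(0,ne_1)$ almost surely. Since $\dist(0,ne_1)\le n$, dominated convergence gives continuity of $f_n$. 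By Kingman/Liggett's theorem (as used in the proof of Lemma \ref{lem:distance_in_the_plane_lemma_1}), $\eta(u)=\inf_n f_n(u)$, so $\eta$ is upper semicontinuous as the infimum of continuous functions. Combined with monotone non-increasingness this forces left-continuity at every $u$: indeed $\eta(u^-)\ge\eta(u)$ by monotonicity while $\eta(u^-)\le\eta(u)$ by upper semicontinuity.

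The main obstacle is right-continuity, which I would establish via a quantitative Lipschitz-type bound $\eta(u)-\eta(u+\delta)\le C(u)\,\delta$. Using the coupling and a true $\omega_{u+\delta}$-geodesic $\gamma$ between $0$ and $Re_1$,
\[
\dist_u(0,Re_1)-\dist_{u+\delta}(0,Re_1)\le \len_u(\gamma)-\len_{u+\delta}(\gamma)=\len_\euc\bigl(\gamma\cap(\calS(\omega_{u+\delta})\setminus\calS(\omega_u))\bigr).
\]
The right-hand side is bounded by the sum of Euclidean diameters of those connected components of $\calS(\omega_{u+\delta})$ that $\gamma$ enters \emph{and} contain at least one point of $\omega'=\omega_{u+\delta}-\omega_u$. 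Using the thinning representation (each point of $\omega_{u+\delta}$ lies in $\omega'$ independently with probability $\delta/(u+\delta)$), together with the exponential tails on cluster diameters in the subcritical regime (Theorem \ref{thm:diam_of_clusters} and Lemma \ref{lem:diam_of_clusters_2}) and the linear-in-$R$ bound $\len_\euc(\gamma)\le \alpha R$ from Theorem \ref{thm:too_long_Euclidean_distance_is_very_unlikely}, one obtains $\Exp[\dist_u-\dist_{u+\delta}]\le C(u)\,\delta\, R$. Dividing by $R$ and passing to the limit via Theorem \ref{thm:distance_in_the_plane} yields $\eta(u)-\eta(u+\delta)\le C(u)\,\delta$, hence right-continuity. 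The delicate point is that $\gamma$ depends on $\omega'$, so Fubini cannot be applied to a deterministic path; the thinning coupling is what decouples the relevant probabilities, while subcriticality is what keeps the per-cluster contribution bounded in expectation uniformly in $R$.
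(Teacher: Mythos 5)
Your treatment of $\eta(0)=1$ and of monotonicity is correct and matches the paper (the paper's proof of monotonicity is exactly your thinning coupling, packaged as a marked Poisson process $\widehat\omega=\sum\delta_{(x_i,u_i)}$ with $\widehat\omega_u$ keeping the points with $u_i\le u$). Your route to continuity, however, diverges from the paper's and is only half complete. The upper-semicontinuity part is sound: $\eta(u)=\inf_n f_n(u)$ with $f_n(u)=\Exp_u[\dist(0,ne_1)]/n$ follows from Liggett's theorem, and continuity of each $f_n$ can be made rigorous by the locality argument you sketch (on the event that every true geodesic stays in $B(0,L)$, the distance is a function of the configuration in $B(0,L+1)$ only, and the two configurations coincide there with probability tending to one; convergence in probability plus the bound $\dist\le n$ suffices, so you do not even need the a.s.\ statement). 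Combined with monotonicity this gives left-continuity, as you say.

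The gap is in right-continuity. Your bound reduces to estimating $\Exp\bigl[\sum_j \ell_j\,\ind\{\text{component }j\text{ contains an }\omega'\text{ point}\}\bigr]$, where the sum runs over the connected components of $\calS(\omega_{u+\delta})$ entered by the true geodesic $\gamma$ and $\ell_j$ is the Euclidean length $\gamma$ acquires in component $j$ (note $\ell_j$ is \emph{not} bounded by the diameter of the component --- by the argument of Lemma \ref{lem:num_of_crossing_of_a_finite_box_by_a_true_geodesic} it is only $O(\diam^d)$ --- though the exponential tails of Theorem \ref{thm:diam_of_clusters} absorb this). The thinning coupling does make the labels independent of the point locations, hence of $\gamma$; conditioning on $\omega_{u+\delta}$, component $j$ contains an $\omega'$ point with probability at most $k_j\delta/(u+\delta)$, $k_j$ being its number of points. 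But you are then left with $\Exp[\sum_j \ell_j k_j]$ over the components \emph{selected by the geodesic}, a size-biased sum: the geodesic preferentially visits large clusters, and a naive bound over all clusters near $B(0,\alpha R)$ gives $O(\delta R^d)$, not $O(\delta R)$, which is useless after dividing by $R$. Controlling this selection bias is a real piece of work that your sketch does not supply. The paper avoids the issue entirely by never seeking a modulus of continuity: it first fixes $R$ (large enough, using the large-deviation estimates of Theorems \ref{thm:Chemical_distance} and \ref{thm:too_long_Euclidean_distance_is_very_unlikely} with constants chosen uniformly over a compact interval $I\ni u$, so that $\dist(0,Re_1)/R$ approximates $\eta(u')$ to within $\e$ for all $u'\in I$ outside an event of small probability), and only then sends $u'\to u$, so that with probability $1-O(1-e^{-|u'-u|\kappa_d(\alpha R+1)^d})\to 1$ the two configurations have \emph{no} discrepancy at all in $B(0,\alpha R+1)$ and the two distances coincide exactly. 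The same uniform large-deviation input would also rescue your scheme more cheaply than the Lipschitz bound: it shows $f_R\to\eta$ uniformly on compact subintervals of $[0,u_*)$, so $\eta$ is a locally uniform limit of your continuous functions $f_R$ and both one-sided continuities follow at once.
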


We start the proof by introducing a natural coupling of the probability measures $\prob_u$ for $u\geq 0$.
Let 
\[
\widehat{\Omega} = \BRK{\widehat{\omega}=\sum_{i\geq 0} \delta_{(x_i,u_i)} ~:~ \begin{array}{l}
x_i \in\R^d,~u_i\in [0,\infty) \text{ for all } i\geq 0 \text{ and } \widehat{\omega}(A\times [0,u])<\infty \\
\text{for all bounded, Borel-measurable } A\subset \R^d \text{ and } u\geq 0 
\end{array}}
\]
and for $u\geq 0$ and $\widehat{\omega}\in\widehat{\Omega}$ define 
\[
\widehat{\omega}_u = \sum_{{\scriptsize \begin{array}{c}(x_i,u_i)\in \text{supp}\widehat{\omega},\\u_i\leq u\end{array}}}\delta_{x_i}.
\]
Note that $\homega_u\in \Omega$, so $\dist(\cdot;\homega_u)$ is well-defined. Finally, let $\prob$ be the probability measure on $\widehat{\Omega}$ under which $\widehat{\omega}$ is distributed like a Poisson point process on $\R^d\times [0,\infty)$ with intensity $\Leb_d(dx)\times \Leb_1(dx)|_{[0,\infty)}$. 

One can verify that for every $u\geq 0$, the distribution of $\omega$ under $\prob_u$ is the same as the distribution of $\widehat{\omega}_u$ under $\prob$. That is, we constructed a coupling of the probability measures $\prob_u$ for $u\geq 0$ under which
\begin{equation}
\label{eq:coupling}
	\widehat{\omega}_u\leq \widehat{\omega}_{u'},\quad \forall 0\leq u\leq u'.
\end{equation}
We will denote by $\Exp$ expectation with respect to the probability measure $\prob$.

\begin{proof1}{Proof of Proposition \ref{prop:continuity_of_eta_u}}
Fix $u\in [0,u_*)$ and $\varepsilon>0$. Choose $\delta_0>0$ such that $[u-\delta_0,u+\delta_0]\subset [0,u_*)$ if $u>0$ and $[0,\delta_0]\subset [0,u_*)$ if $u=0$. We denote this compact interval by $I$.

Denote
\begin{equation}\label{eq:the_defn_of_the_event_A_S}
	A_{R} = \{\exists \gamma \in \Gamma_0(0,Re_1)~:~\len_\euc(\gamma)> \alpha R\}\cup\{|\dist(0,Re_1)-\eta(u)R|>\varepsilon R/6\},
\end{equation}
where $\alpha$ is as in Theorem \ref{thm:too_long_Euclidean_distance_is_very_unlikely}(2).

For every $u'\in I$, \revision{using the fact that $\eta(u),\eta(u')\in[0,1]$}, we have 
\begin{equation}\label{eq:cont_eq_1}
\begin{split}
& \left|\eta(u')-\eta(u)\right|  \revision{\le} 
\left|\Exp\left[(\eta(u')-\eta(u))\ind_{A_R^c}(\homega_{u'})\ind_{A_R^c}(\homega_{u})\right]\right| +\prob(A_{R}(\widehat{\omega}_{u'})) +   \prob(A_{R}(\widehat{\omega}_{u})) \\ 
&\qquad\le  \frac{1}{R}\left|\Exp\left[\left[\dist(0,Re_1;\widehat{\omega}_{u'})- \dist(0,Re_1;\widehat{\omega}_{u})\right]\cdot  \ind_{A^c_{R}}(\widehat{\omega}_{u'})\ind_{A^c_{R}}(\widehat{\omega}_{u})\right]\right|\\
&\qquad\qquad +\frac{\varepsilon}{3} +  \prob(A_{R}(\widehat{\omega}_{u'})) + \prob(A_{R}(\widehat{\omega}_{u})).
\end{split}
\end{equation}

By going back to the proofs of Theorem \ref{thm:Chemical_distance} and Theorem \ref{thm:too_long_Euclidean_distance_is_very_unlikely}(2) one can verify that both $\alpha$ and the constants $c_1,C_2,c_3$ can be chosen uniformly on the compact interval $I$. That is, there exist positive constants $\alpha$, $C_9$ and $c_{10}$ depending on $u$ and $\delta_0$ such that for every $u'\in I$, $\prob_{u'}(A_{R})\leq C_9 e^{-c_{10} R}$.

Due to the uniform bound on the probability of $A_{R}$ on the interval $I$ one can choose $R$ large enough (depending only on $\varepsilon>0$ and $I$) so that 
\begin{equation}\label{eq:cont_eq_2}
\prob(A_{R}(\widehat{\omega}_{u'}))+\prob(A_{R}(\widehat{\omega}_{u}))\leq \frac{\varepsilon}{3}.
\end{equation}
Combining \eqref{eq:cont_eq_1} and \eqref{eq:cont_eq_2}, it is enough to show the existence of $\delta>0$ such that for $|u-u'|<\delta$, 
\[
\frac{1}{R}\left|\Exp\left[\left[\dist(0,Re_1;\widehat{\omega}_{u'})- \dist(0,Re_1;\widehat{\omega}_{u})\right]\cdot  \ind_{A^c_{R}}(\widehat{\omega}_{u'})\ind_{A^c_{R}}(\widehat{\omega}_{u})\right]\right|<\frac{\varepsilon}{3}.
\]
This holds since on the event $\widehat{\omega}_{u'},\widehat{\omega}_{u}\in A^c_{R}$ the values of $\dist(0,Re_1;\widehat{\omega}_{u'})$ and $\dist(0,Re_1;\widehat{\omega}_{u})$ are the same whenever 
\[\supp(\widehat{\omega}_{u'})\cap B(0,R\alpha+1) = \supp(\widehat{\omega}_{u})\cap B(0,R\alpha+1),
\]
and therefore 
\[
\begin{aligned}
&\revision{\frac{1}{R}\left|\Exp\left[\left[\dist(0,Re_1;\widehat{\omega}_{u'})- \dist(0,Re_1;\widehat{\omega}_{u})\right]\cdot  \ind_{A^c_{R}}(\widehat{\omega}_{u'})\ind_{A^c_{R}}(\widehat{\omega}_{u})\right]\right|} \\
&\qquad \leq 2 \cdot \prob\brk{\supp(\widehat{\omega}_{u'})\cap B(0,R\alpha+1) \neq \supp(\widehat{\omega}_{u})\cap B(0,R\alpha+1)
}.
\end{aligned}
\]
Assume without loss of generality that $u'\leq u$. Due to the coupling, the point measure $\widehat{\omega}_{u}$ is obtained from $\widehat{\omega}_{u'}$ by adding to it an additional independent point measure in $\Omega$ which is distributed as a Poisson point process with intensity measure $(u-u')\cdot \Leb_d(dx)$. In particular, the probability that there is an additional point inside the ball $B(0,R\alpha+1)$ is 
\[
	1-e^{-(u-u')\kappa_d (R\alpha+1)^d} \leq 1-e^{-\delta \kappa_d (R\alpha+1)^d}.
\]
Thus, with probability at most $1-e^{-\delta \kappa_d (R\alpha+1)^d}$ the point measures $\widehat{\omega}_{u'}$ and $\widehat{\omega}_{u}$ do not coincide inside the ball $B(0,R\alpha+1)$. Recalling that $R$ depends only on $\varepsilon$ and $I$ we can choose  $\delta$ small enough so that $2(1-e^{-\delta \kappa_d (R\alpha+1)^d})<\varepsilon/3$,  thus completing the proof. 

\end{proof1}

\subsection{Volume convergence}

\begin{proposition}\label{prop:vol_esimation}
For every $u\geq 0$, $\prob_u$-almost surely
\begin{equation}\label{eq:vol_estimation}
 \lim_{R\to\infty} \nu_R([0,1]^d)= \lim_{M\to\infty} \frac{\Leb_d([0,M]^d \setminus \cal S(\omega))}{\Leb_d([0,M]^d)} = e^{-u\kappa_d} =\mu_{\sigma(u)}([0,1]^d).
\end{equation}
\end{proposition}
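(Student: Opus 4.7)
The plan is to establish the three equalities in \eqref{eq:vol_estimation} in sequence, using the rescaling coupling introduced in Section~\ref{sec:Uniform_dist_of_points} so that a single $\prob_u$ governs all values of $R$.

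First I would identify $\nu_R([0,1]^d)$ with a normalized Euclidean volume in a box. Under the rescaling coupling, the defects that are relevant to $[0,1]^d$ at parameter $R$ correspond, after the change of variables $y=Rx$, to the fixed random set $\calS(\omega)=\calS_1(\omega)$ intersected with $[0,R]^d$. Since $\xi=0$, defects contribute zero to $\nu_R$ (see \eqref{eq:def_mu_R_R_d}), so
\[
\nu_R([0,1]^d;\omega) = R^{-d}\,\Leb_d\!\brk{[0,R]^d\setminus\calS(\omega)}.
\]
Setting $M=R$ then gives the first equality in \eqref{eq:vol_estimation} pointwise in $\omega$.

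Second, I would evaluate the right-hand limit via the pointwise ergodic theorem for the $\R^d$-action by translations on $(\Omega,\calF,\prob_u)$. This action is ergodic: by Lemma~\ref{lem:ergodicity}, already the single-generator action $\tau_x$ is ergodic for any $x\neq 0$, and every $\R^d$-invariant event is in particular $\tau_x$-invariant, hence trivial. Writing $f(\omega) = \ind_{\{0\notin \calS(\omega)\}}$ and using the covariance $\ind_{\{y\notin \calS(\omega)\}} = f(\tau_y\omega)$, we have
\[
\frac{\Leb_d([0,M]^d\setminus\calS(\omega))}{M^d}
= \frac{1}{M^d}\int_{[0,M]^d} f(\tau_y\omega)\,dy.
\]
Applying the multi-parameter Birkhoff/Wiener ergodic theorem along the F{\o}lner sequence $([0,M]^d)_{M>0}$ yields $\prob_u$-a.s.\ convergence to $\Exp_u[f]=\prob_u(0\notin \calS(\omega))$.

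Third, the value of this probability is a direct Poisson computation: $0\in\calS(\omega)$ iff $\omega(\overline{B(0,1)})\ge 1$, and $\omega(\overline{B(0,1)})$ is Poisson with parameter $u\cdot\Leb_d(\overline{B(0,1)}) = u\kappa_d$, so $\prob_u(0\notin\calS(\omega))=e^{-u\kappa_d}$. The final equality $e^{-u\kappa_d}=\mu_{\sigma(u)}([0,1]^d)$ follows by unpacking the definition of $\sigma$: since $\sigma(u)=(e^{-u\kappa_d}+\xi^d(1-e^{-u\kappa_d}))^{1/d}$, for $\xi=0$ we get $\sigma(u)^d=e^{-u\kappa_d}$, which (consistent with the convention used in \eqref{eq:def_mu_R_R_d}) is exactly $\mu_{\sigma(u)}([0,1]^d)$. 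There is no real obstacle here; the proposition is a direct application of ergodicity once the rescaling is in place, and the only ingredient beyond definitions is the Wiener ergodic theorem for the ergodic Poisson translation action.
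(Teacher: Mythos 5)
Your proposal is correct and follows essentially the same route as the paper: the Poisson computation $\prob_u(0\notin\calS(\omega))=e^{-u\kappa_d}$ combined with the (multi-parameter) ergodic theorem applied to the spatial average of $\ind_{\{x\notin\calS(\omega)\}}$, using the ergodicity from Lemma~\ref{lem:ergodicity}. You merely spell out more explicitly the rescaling identification $\nu_R([0,1]^d)=R^{-d}\Leb_d([0,R]^d\setminus\calS(\omega))$ and the unpacking of $\sigma$, which the paper leaves implicit.
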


\begin{proof}
As observed in Proposition \ref{prop:upper_bound_on_eta_u} 
\[
	\prob_u(0\notin \calS(\omega)) = e^{-u\kappa_d}. 
\]
Using the ergodicity of the model, see Lemma \ref{lem:ergodicity}, and the ergodic theorem we can conclude that 
\[
\begin{aligned}
	\lim_{M\to\infty} \frac{\Leb_d([0,M]^d \setminus \cal S(\omega))}{\Leb_d([0,M]^d)} & = \lim_{M\to\infty}\frac{1}{\Leb_d(B(0,M))}\int_{B(0,M)} \ind_{x\notin \calS(\omega)}\Leb_d(dx)\\
	& = \Exp_u[\ind_{0\notin \calS(\omega)}] = e^{-u\kappa_d}.
\end{aligned}
\]
\end{proof}

\section{Convergence for uniform point distributions}
\label{sec:convergene_uniform_distribution}

In this section, we exploit the results proved in Sections \ref{sec:Uniform_dist_of_points}--\ref{sec:concentration_results} to prove Parts $1$ and $4$ of \thmref{thm:Main_theorem} for the case when $u$ is uniform and $D$ is convex. These assumptions are relaxed in the next section.
Since for uniform $u$, we have a natural coupling of the measures $\prob_{u,R}$ using a single measure $\prob_u$, we will have in fact a slightly stronger result than stated in \thmref{thm:Main_theorem}.

The main result of this section is the following:

\begin{theorem}\label{thm:conv_uniform_compact_point_defects}
Let $u\in [0,u_*)$ and let $D\subset \R^d$ be a convex, compact $d$-dimensional manifold with corners. Then
\begin{equation}
\lim_{R\to\infty} \sup_{x,y\in D} \left| \boldd_{\eta(u)}^D(x,y) - \dist_R^D(\pi_R(x),\pi_R(y)) \right| =0, \qquad \prob_{u}\text{-a.s.}
\end{equation}
In particular, since $\pi_R$ is onto, the sequence $(D_R,\dist_R^D)$ converges $\prob_u$-a.s.~to $(D,\boldd_{\eta(u)}^D)$ with respect to the Gromov-Hausdorff metric, where $\boldd_{\eta(u)}^D$ should be interpreted as in \eqref{eq:defn_of_d_rho^D}.
\end{theorem}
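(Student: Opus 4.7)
The plan is first to use the convexity of $D$ to identify the limit intrinsic distance with the straight-line one, and then to bound $\dist_R^D$ from above by constructing admissible paths in $D$ built out of near-geodesics for the unrestricted distance $\dist_R$, whose shape is controlled by Proposition~\ref{pn:concentration_est}. Because $D$ is convex and $\eta(u)$ is constant, every straight segment $[x,y]$ lies in $D$ and minimizes the conformal length $\eta(u)\cdot\len_\euc$ among paths in $D$, so $\boldd_{\eta(u)}^D(x,y)=\eta(u)|x-y|$. Moreover, since every admissible path for $\dist_R^D$ is also admissible for $\dist_R$, the inequality $\dist_R^D(\pi_R(x),\pi_R(y))\geq \dist_R(\pi_R(x),\pi_R(y))$ holds; combined with the a.s.~uniform convergence on the compact set $D$ given by Theorem~\ref{thm:distances_in_M_R}, this yields $\prob_u$-a.s.
\[
\liminf_{R\to\infty}\inf_{x,y\in D}\brk{\dist_R^D(\pi_R(x),\pi_R(y))-\eta(u)|x-y|}\geq 0.
\]
The real content is therefore the matching uniform upper bound $\limsup_{R\to\infty}\sup_{x,y\in D}\brk{\dist_R^D-\eta(u)|x-y|}\leq 0$.

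For the upper bound, I would fix an interior point $c\in D$ with $\overline{B(c,r)}\subset D$ for some $r>0$ (possible since $D$ has nonempty interior) and, for $\lambda\in(0,1)$, introduce the radial contraction $\phi_\lambda(x)=(1-\lambda)x+\lambda c$. A direct check using convexity shows that $\phi_\lambda(D)$ is a convex subset of $D_{\lambda r}:=\{z\in D:\dist_\euc(z,\partial D)>\lambda r\}$, and that $D_{\lambda r}$ is itself convex. Given a target error $\varepsilon>0$, I would choose $\lambda$ and $\delta_0$ small enough (depending only on $\varepsilon$ and the geometry of $D$), set $\varepsilon_0=\lambda r/(2\diam_\euc(D))$, pick a finite Euclidean $\delta_0$-net $\{z_i\}_{i=1}^N$ of $\phi_\lambda(D)$, and apply Proposition~\ref{pn:concentration_est} in rescaled form (that is, to the points $Rz_i,Rz_j$) to every pair $(z_i,z_j)$ with $|z_i-z_j|\geq\delta_0$. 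A union bound together with Borel--Cantelli, exploiting the exponential decay in $\sqrt{R|z_i-z_j|}$, then yields $\prob_u$-a.s.~that for all sufficiently large $R$ and every such pair there exists a path $\gamma_{ij}\in\Gamma_{\varepsilon_0}(Rz_i,Rz_j)$ with $d_H(\gamma_{ij}/R,[z_i,z_j])\leq \varepsilon_0|z_i-z_j|\leq\lambda r/2$; by the $\lambda r$-buffer around $[z_i,z_j]$, the path $\gamma_{ij}/R$ lies entirely in $D$.

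To conclude, given arbitrary $x,y\in D$, I would set $x'=\phi_\lambda(x)$, $y'=\phi_\lambda(y)$, choose net-points $z_i,z_j$ within $\delta_0$ of $x',y'$, and concatenate the five pieces $[x,x']$, $[x',z_i]$, $\gamma_{ij}/R$, $[z_j,y']$, $[y',y]$, each contained in $D$ by convexity and the buffer argument. Using $\g_R\leq\euc$ to bound the $\dist_R$-length of each straight segment by its Euclidean length, and using Theorem~\ref{thm:distances_in_M_R} on $D$ to estimate $\dist_R(\pi_R(z_i),\pi_R(z_j))\leq\eta(u)|z_i-z_j|+o(1)\leq \eta(u)|x-y|+O(\delta_0)+o(1)$, the total $\dist_R^D$-length is bounded above by $\eta(u)|x-y|+O(\lambda+\delta_0)+o(1)$, uniformly in $x,y\in D$, which is less than $\eta(u)|x-y|+\varepsilon$ for all sufficiently large $R$. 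Since $\varepsilon$ was arbitrary, this gives the desired uniform upper bound. The main obstacle is precisely the uniform handling of pairs with endpoints near $\partial D$: without modification, a near-geodesic in $\M_R$ between two boundary points can detour outside $D$, breaking the comparison $\dist_R^D\approx\dist_R$. The pair (radial contraction $\phi_\lambda$, Hausdorff control from Proposition~\ref{pn:concentration_est}) is exactly what lets one pay a small cost $O(\lambda)$ to step away from $\partial D$ and still keep the constructed paths inside $D$ uniformly in the pair.
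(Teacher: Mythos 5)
Your proposal is correct and follows essentially the same route as the paper: the lower bound and the identification $\boldd_{\eta(u)}^D=\boldd_{\eta(u)}$ come from convexity and Theorem~\ref{thm:distances_in_M_R}, while the upper bound is obtained by buffering points away from $\partial D$ and using Proposition~\ref{pn:concentration_est} together with a net, a union bound and Borel--Cantelli to produce $\e$-geodesics that remain inside $D$. The only differences are cosmetic: the paper works with the subset $D(\delta)$ of points at distance $\delta$ from $\partial D$ and an $R$-dependent $1/\sqrt{R}$-net, where you use a radial contraction and a fixed finite net (and, as in the paper, you should still interpolate from integer $R$ to all real $R$ after applying Borel--Cantelli along a sequence).
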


\begin{proof}
Denote $\calA_R^D = (D_R, \dist_R^D)$, $\calB_R^D = (D_R,\dist_R)$ and $\calC^D =( D,\boldd_{\eta(u)}^D)$.
\revision{We denote by ${\pi}_R^D$ the projection $D\to D_R$ when considered as a mapping between $\calC^D$ and $\calA_R^D$.} 
We will use the space $\calB_R^D$ as an intermediate metric space in order to bound the distortion of $\pi_R^D$. 
We denote by $\tilde{\pi}_R^D$ the projection $D\to D_R$ when considered as a mapping between $\calC^D$ and $\calB_R^D$ and by $\text{Id}$ the identity from $D_R$ to itself when considered as a map between $\calB_R^D$ and $\calA_R^D$. See Figure \ref{fig:three_spaces_illustration} for an illustration. 

\begin{figure}[h]
\centerline{
\xymatrix{ \calC^D = (D,\boldd_{\eta(u)}^D ) \ar[rr]^{\pi_R^D} \ar[dr]^{\tilde{\pi}_R^D} & & \calA_R^D = (D_R,\dist_R^D) \\
& \calB_R^D = (D_R,\dist_R) \ar[ru]^{\text{Id}}&}
}
\caption{The spaces $\calA_R^D$,$\calB_R^D$ and $\calC^D$.}
\label{fig:three_spaces_illustration}
\end{figure}

By the triangle inequality,
\[
\dis \pi_R^D \le \dis \tilde{\pi}_R^D + \dis \text{Id},
\] 
\revision{where the distortion is defined as in Subsection~\ref{subsec:2.3.6}. Thus,
}
it is enough to prove that with $\prob_u$-probability one, both $\dis \tilde{\pi}_R^D$ and $\dis \text{Id}$ go to zero as $R\to\infty$. 

For $ \dis \tilde{\pi}_R^D$, observe that $D$ is compact and convex, and since $u$ is constant, it follows that $D$ is convex with respect to $\boldd_{\eta(u)}$, hence $\boldd_{\eta(u)}^D = \boldd_{\eta(u)}$. The mapping $\tilde{\pi}_R^D:\calC_D^R \to \calB_D^R$ is onto and by \thmref{thm:distances_in_M_R} has $\prob_u$-a.s.~\revision{an asymptotically} vanishing distortion.

The rest of the proof shows that the distortion of $\text{Id} : \calA_R^D \to \calB_R^D$ also vanishes asymptotically. 
We use the concentration result \propref{pn:concentration_est} as follows: Fix $\delta>0$, and denote by $D(\delta)$ the set of points in $D$ whose distance from the boundary $\partial D$ is greater than $\delta$. For $R>0$, let $N_{R,\delta}$ denote a finite $1/\sqrt{R}$-net of $D(\delta)$ such that $|N_{R,\delta}| < CR^{d/2}$ and for every $x,y\in N_{R,\delta}$, $|x-y|>c/\sqrt{R}$, with $C$ and $c$ depending only on $d$ \revision{and $D$}.
Denote $0<\e = \delta /\revision{(2\diam(D))}$, and let $x,y\in N_{R,\delta}$. By \propref{pn:concentration_est} we have that for $R>R_0(\delta)$,
\begin{equation}
\begin{aligned}
\prob_u\brk{\forall \gamma \in \Gamma_\varepsilon(Rx,Ry) ~:~ d_H(\gamma,[Rx,Ry])>\varepsilon R|x-y|} & \leq C_7 e^{-c_8 (R|x-y|)^{1/2}} \\
& < C_{9} e^{-c_{10} R^{1/4}},
\end{aligned}
\end{equation}
and therefore by a union bound argument \revision{and the fact that $|N_{R,\delta}|^2 \le C\,R^d$}, we have
\beq
\label{eq:estimation_geod_from_seg}
\prob_u\brk{\exists x,y\in N_{R,\delta} \,\forall \gamma \in \Gamma_\varepsilon(Rx,Ry) ~:~ d_H(\gamma,[Rx,Ry])>\varepsilon R|x-y|} \leq C_{11} R^d e^{-c_{10} R^{1/4}}.
\eeq
Considering the sequence of events in \eqref{eq:estimation_geod_from_seg} with $R$ replaced by $m\in\bbN$, we get that the sum of the probabilities is finite, hence by the Borel\revision{--}Cantelli lemma, we have that $\prob_u$-a.s.~there exists $M_0(\delta,\omega)$ such that for every $m\ge M_0(\delta, \omega)$,
\[
\forall x,y\in N_{m,\delta} \,\exists \gamma \in \Gamma_\varepsilon(m x,m y) ~:~ d_H(\gamma,[m x,m y]) \le \e m |x-y| < \frac{m\delta}{2}.
\]

Since $D$ is convex, $[x,y]\subset D$. Therefore by the definition of $\delta$, \revision{it} follows that for every $m\geq M_0$ and every $x,y\in N_{m,\delta}$ there exists an $\e$-geodesic with respect to $\dist_R$ that remains in $D_R$ for $R$ large enough, hence 
\[
\lim_{m\to \infty} \sup_{x,y\in N_{m,\delta}} |\dist_{m}(x,y) - \dist_{m}^D (x,y)| \le \e, \quad \prob_u\text{-a.s.}
\]
Since $N_{m,\delta}$ is \revision{a} $1/\sqrt{m}$-net of $D(\delta)$ \revision{it} follows that
\[
\lim_{m\to \infty} \sup_{x,y\in D(\delta)} |\dist_{m}(x,y) - \dist^D_{m} (x,y)| \le \e, \quad \prob_u\text{-a.s.}
\]
and therefore
\[
\lim_{m\to \infty} \sup_{x,y\in D} |\dist_{m}(x,y) - \dist_{m}^D (x,y)| \le \e + 4\delta = \delta\brk{4+ \frac{1}{2\diam(D)}}, \quad \prob_u\text{-a.s.}
\]
Since for every $R>0$ there exists an $m$ such that $|m-R|<1$, it follows that for such a choice of $m$, for every $x,y\in D$, 
\[
\begin{aligned}
	& |\dist_{m}(x,y) - \dist_R(x,y)| =\left|\frac{\dist(m x,m y)}{m} - \frac{\dist(Rx,Ry)}{R}\right|\\
	& \quad \le \frac{\dist(m x,Rx)}{m} + \frac{\dist(m y,Ry)}{m}+\dist(Rx,Ry)\left|\frac{1}{m}-\frac{1}{R}\right|\\
	& \quad \leq \frac{|x|_\infty}{R-1}+\frac{ |y|_\infty}{R-1} + 
	\frac{|x-y|_\infty}{R-1}\leq \frac{4 \sup_{z\in D}|z|_\infty}{R-1}.
\end{aligned}
\]
Similarly, 
\[
|\dist^D_{m}(x,y) - \dist^D_R(x,y)|\leq \frac{4 \sup_{z\in D}|z|_\infty}{R-1}.
\]
It follows that
\[
	\limsup_{R\to \infty} \sup_{x,y\in D} |\dist_R(x,y) - \dist_R^D (x,y)| \le \e + 4\delta = \delta\brk{4+ \frac{1}{2\diam(D)}}, \quad \prob_u\text{-a.s.}
\]
Since $\delta$ is arbitrary we finally have that
\beq
\label{eq:dist_R_vs_dist_D_R}
\lim_{R\to \infty} \sup_{x,y\in D} |\dist_R(x,y) - \dist_R^D (x,y)| = 0, \quad \prob_u\text{-a.s.}
\eeq
This shows that the identity mapping $\text{Id}: \calB_R^D\to \calA_R^D$ has \revision{an asymptotically} vanishing distortion $\prob_u$-a.s., which completes the proof.
\end{proof}

The following proposition proves Part $4$ of \thmref{thm:Main_theorem} for the case of constant $u$, that is, that $(\pi_R^D)^{-1}:D'_R \to D$ is asymptotically surjective.
\begin{proposition}
\label{pn:asymptotic_surjectivity_constant_u}
	\begin{equation}
		\lim_{R\to\infty} d_H(D,\pi^{-1}_R(D'_R)) =0, \quad \prob_{u}\text{-a.s.} \nonumber
	\end{equation}
\end{proposition}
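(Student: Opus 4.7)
The plan is to observe that $\pi_R^{-1}(D'_R) = D\setminus \calS_R(\omega)$, and then show that with high probability (for large $R$), no large Euclidean ball inside $D$ is entirely contained in $\calS_R(\omega)$. This will follow from the subcritical cluster size estimate proved in Lemma~\ref{lem:diam_of_clusters_2}, combined with the natural rescaling from Subsection~4.1.

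First, I would rewrite the statement after rescaling. Since $\calS_R(\omega)$ is obtained from $\calS(\omega)$ by the dilation $T_{1/R}$, we have
\[
d_H\brk{D,\,\pi_R^{-1}(D'_R)} = d_H\brk{D,\,D\setminus \calS_R(\omega)} = \frac{1}{R}\,d_H\brk{RD,\,RD\setminus \calS(\omega)},
\]
so it suffices to show that, $\prob_u$-a.s., for every $\varepsilon>0$ and every $R$ large enough, $d_H(RD,RD\setminus\calS(\omega))\le \varepsilon R$. The key observation is: if this fails, then there exists $y\in RD$ such that the whole ball $B(y,\varepsilon R)$ is contained in $\calS(\omega)$; since $B(y,\varepsilon R)$ is connected, it must lie in a single connected component of $\calS(\omega)$, which therefore has Euclidean diameter at least $2\varepsilon R$.

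Next, I would use a unit cube grid to cover $RD$. Let $\calQ_R$ be the collection of unit cubes $q+[0,1]^d$, $q\in\bbZ^d$, which intersect $RD$; clearly $|\calQ_R|\le C(D)\,R^d$. For $Q\in\calQ_R$ let $W_Q = \bigcup_{x\in Q}\calS(x;\omega)$ be the union of clusters of $\calS$ meeting $Q$. If some cluster of $\calS$ has diameter $\ge 2\varepsilon R$ and meets $RD$, then it meets some cube $Q\in \calQ_R$, and so $\diam_\euc(W_Q)\ge 2\varepsilon R$. By translation invariance of $\prob_u$ and Lemma~\ref{lem:diam_of_clusters_2}, applied to each $Q$ (the lemma is stated for $[0,1]^d$ but is translation invariant),
\[
\prob_u\!\brk{\exists Q\in\calQ_R:\diam_\euc(W_Q)\ge 2\varepsilon R}\le C(D)\,R^d\cdot C_5\,e^{-2c_6\varepsilon R}.
\]

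Finally, since this bound is summable in $R$ along the integers, the Borel--Cantelli lemma yields that $\prob_u$-a.s., for every integer $n$ sufficiently large, every cluster of $\calS$ meeting $nD$ has diameter $<2\varepsilon n$, hence $d_H(nD,nD\setminus \calS(\omega))\le \varepsilon n$, i.e.\ $d_H(D,D\setminus \calS_n(\omega))\le \varepsilon$. To pass from integer $R=n$ to arbitrary real $R>0$ one writes $R = n+\theta$ with $\theta\in[0,1)$ and compares $\calS_R$ with $\calS_n$ using the fact that the balls only shrink as $R$ grows, giving $d_H(D,D\setminus \calS_R) \le d_H(D,D\setminus \calS_n) + O(1/R)$. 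Taking a countable sequence $\varepsilon = 1/k\to 0$ and intersecting the resulting full-measure events yields the claim. The step that requires the most care is the Borel--Cantelli summability across all $\varepsilon$ simultaneously (which is why we discretise $\varepsilon$), but no essentially new estimate is needed beyond Lemma~\ref{lem:diam_of_clusters_2}.
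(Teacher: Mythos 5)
Your proposal is correct and follows essentially the same route as the paper: both reduce the statement to the absence of large clusters of $\calS(\omega)$ meeting $R\cdot D$, apply Lemma~\ref{lem:diam_of_clusters_2} with a union bound over the $O(R^d)$ unit cubes covering $R\cdot D$, invoke Borel--Cantelli along a discrete sequence of radii, and interpolate to all real $R$ (the paper uses the threshold $\log^2(KR)$ in place of your fixed $\varepsilon R$, and the Lipschitz estimate $d_H\brk{\pi_R^{-1}(D'_R),\pi_S^{-1}(D'_S)}\le|R-S|$ along the $\varepsilon$-dense sequence $m\varepsilon$, but these are cosmetic differences). One small repair: when $y$ is within $\varepsilon R$ of $\partial(RD)$, the failure of the Hausdorff bound only gives $B(y,\varepsilon R)\cap RD\subset\calS(\omega)$, not $B(y,\varepsilon R)\subset\calS(\omega)$; since $D$ is convex in this section, that intersection is still connected with diameter of order $\varepsilon R$, so the large-cluster conclusion survives. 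Likewise, in the coupling the centers of the balls move (not only the radii shrink) as $R$ varies, but the $O(1/R)$ comparison you assert still holds on compact sets.
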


\begin{proof}
It follows from Lemma \ref{lem:diam_of_clusters_2} that 
\[
	\begin{aligned}
		& \prob_u\brk{\begin{array}{l}
		\exists \text{ a connected component }\calC \text{ in } \bigcup_{x\in[-M,M]^d}
		\calS(x;\omega) \\
		\text{such that } \diam_\euc(\calC)\geq \log^2{M}
		\end{array}
		}\\
		\leq & \sum_{z\in \bbZ^d \cap [-M\revision{-1},M]^d} \prob_u\brk{\begin{array}{l}\exists \text{ a connected 	component }\calC \text{ in } \bigcup_{x\in z+[0,1]^d}\calS(x;\omega) \\
		\text{such that } \diam_\euc(\calC)\geq \log^2{M}
		\end{array}
		}\\
		\leq & C_5(2M+\revision{2})^de^{-c_6 \log^2(M)}.
	\end{aligned}
\]
\revision{The choice of $\log^2M$ is dictated by the need of satisfying two conditions:  we need a term which is $o(M)$, and the probability must decay sufficiently fast; see \eqref{eq:size_of_clusters_in_a_box}.}

Taking $M=KR$, with $K=\diam_\euc(D)$ and $R>0$, we get that with $\prob_u$-probability at least $1-C_5(2KR+1)^d e^{-c_6\log^2(KR)}$ the distance \revision{from} any point in $\calS \cap R\cdot D$ to $R\cdot D\setminus \calS$ is at most $\log^2(KR)$. Thus
\begin{equation}\label{eq:size_of_clusters_in_a_box}
	\prob_u\brk{
	d_{H}(D,(\pi_R^D)^{-1}(D'_R)) \ge \frac{\log^2 (KR)}{R}	
	}\leq C_5(2KR+\revision{2})^de^{-c_6 \log^2(KR)}.
\end{equation}

Given $\varepsilon>0$, consider the sequence $\brk{d_{H}(D,(\pi_{m\varepsilon}^D)^{-1}(D'_{m\varepsilon}))}_{m\geq 1}$. Since the righthand side of \eqref{eq:size_of_clusters_in_a_box} is summable in $m$, it follows from the Borel--Cantelli lemma, that $\prob_u$-almost surely $d_{H}(D,(\pi_{m\varepsilon}^D)^{-1}(D'_{m\varepsilon}))\leq \frac{\log^2(K\varepsilon m)}{\varepsilon m}$  for all but finitely many $m$'s and in particular that 
\[
	\lim_{m\to\infty} d_{H}(D,(\pi_{m\varepsilon}^D)^{-1}(D'_{m\varepsilon})) =0.
\]
Since for every $0<R\leq S$ we have $d_{H}((\pi_{R}^D)^{-1}(D'_{R}),(\pi_{S}^D)^{-1}(D'_{S}))\leq |R-S|$ and since the sequence $\brk{d_{H}(D,(\pi_{m\varepsilon}^D)^{-1}(D'_{m\varepsilon}))}_{m\geq 1}$ is $\varepsilon$\revision{-}dense in $[0,\infty)$ it follows that $\prob_u$-a.s.~$\limsup_{R\to\infty} d_{H}(D,(\pi_{R}^D)^{-1}(D'_{R})) \leq \varepsilon$. Since $\varepsilon>0$ is arbitrary we get that $\prob_u$-a.s.~$\lim_{R\to\infty} d_{H}(D,(\pi_{R}^D)^{-1}(D'_{R}))=0$.
\end{proof}



\section{Proof of \thmref{thm:Main_theorem}}
\label{sec:proof_main_thm}

In this section we prove Parts 1,2 and 4 of \thmref{thm:Main_theorem}. Part 3 was proved in \propref{prop:upper_bound_on_eta_u}.
Let $D\subset \R^d$ a compact $d$-dimensional manifold with corners, and let $u:D \to [0,u_*)$ be a continuous function. Since $D$ is compact, we can always extend $u$ continuously to $\R^d$ without enlarging its upper bound. Therefore, we can assume without loss of generality that $u:\R^d\to [0,u_*)$, with $\sup u<u_*$. The parameters
$u$ and $R>0$ define a process with probability measure $\prob_{u,R}$.
Similarly to the proof of \propref{prop:continuity_of_eta_u}, we start by introducing a natural coupling of the probability measures $\prob_{u,R}$ for a given $R>0$.

Let 
\[
	\hOmega = \BRK{\homega=\sum_{i\geq 0} \delta_{(x_i,u_i)} ~:~ \begin{array}{l}
	x_i \in\R^d,~u_i\in [0,\infty) \text{ for all } i\geq 0 \text{ and } \homega(A\times [0,u])<\infty \\
	\text{for all compact } A\subset \R^d \text{ and } u\geq 0 
	\end{array}}
\]
and for a continuous function $u:\R^d\to [0,\infty)$ and $\homega\in\hOmega$ define 
\[
	\homega_u = \sum_{{\scriptsize \begin{array}{c}(x_i,u_i)\in \text{supp}\homega,\\u_i\leq u(x_i)\end{array}}}\delta_{x_i}.
\]

Finally, let $\prob_R$ be the probability measure on $\hOmega$ under which $\homega$ is distributed like a Poisson point process on $\R^d\times [0,\infty)$ with intensity $R^d\Leb_d(dx)\times \Leb_1(dx)|_{[0,\infty)}$. 

One can now verify, that for every continuous $u:\R^d\to [0,\infty)$, the distribution of $\omega$ under $\prob_{u,R}$ is the same as the distribution of $\homega_u$ under $\prob_R$. That is,  we constructed a coupling of the probability measures $\prob_{u,R}$,  for $u:\R^d\to [0,\infty)$, under which
\begin{equation}\label{eq:coupling_R}
	\homega_u\leq \homega_{u'},\quad \forall  u,u':\R^d\to [0,\infty),~ u\le u'.\nonumber
\end{equation}

\subsection{Metric convergence}

In this section we prove Part 1 of \thmref{thm:Main_theorem}, which with the coupling constructed above states that:
\begin{quote}
{\em For every $\e>0$,
\beq
\label{eq:dis_pi_R}
\lim_{R\to\infty} \prob_{R} \brk{ \sup_{x,y\in D} \left| \boldd_{\eta\circ u}^D(x,y) - \dist_R^D(\pi_R(x),\pi_R(y);\widehat{\omega}_u) \right| < \varepsilon }=1.
\eeq
}
\end{quote}

In particular, this implies that for every $\e>0$,
\[
\lim_{R\to\infty} \prob_{R} (d_{GH} ((D_R,\dist_R^D(\cdot;\widehat{\omega}_u)),(D,\boldd_{\eta\circ u}^D)\revision{)} > \e) = 0.
\]
\thmref{thm:conv_uniform_compact_point_defects} states that this holds when $D$ is convex and $u=u_0$ is uniform.


We start with a proposition showing that if $D$ is convex and $u:D\to [0,u_*)$ only takes values within a small interval $[u_{\min},u_{\max}]$, then we can bound (with high probability) the distortion of the projection $\pi_R$ between $(D,\boldd_{\eta\circ u}^D)$ and $(D_R(\homega_u),\dist_R^D(\cdot;\widehat{\omega}_u))$.

\begin{proposition}
\label{prop:u_continuity}
Assume that $D$ is a compact, path-connected and convex set \revision{with a non-empty interior}, and let $u_{\min} = \min_D u \ge 0$ and $u_{\max} = \max_D u < u_*$. Consider $\pi_R$ as a function $(D,\boldd_{\eta\circ u}^D) \to (D_R(\homega_u),\dist_R^D(\cdot;\widehat{\omega}_u))$.
Then
\[
\lim_{R\to\infty} \prob_{R} (\dis \pi_R > 3\Delta \eta\cdot  \diam_\euc D) = 0,
\]
where $\Delta \eta = \eta(u_{\min}) - \eta(u_{\max})$.
\end{proposition}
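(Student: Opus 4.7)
The plan is to sandwich the random intrinsic distance $\dist_R^D(\pi_R(\cdot),\pi_R(\cdot);\widehat{\omega}_u)$ between the two distances arising from the constant intensities $u_{\min}$ and $u_{\max}$, and then invoke \thmref{thm:conv_uniform_compact_point_defects} (which applies because $D$ is convex and compact) for these uniform bounds.

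For the sandwich on the random side, note that by definition of $\widehat{\omega}_u$, every $x_i\in D$ satisfies $u_{\min}\le u(x_i)\le u_{\max}$, so $u_i\le u_{\min}$ implies $u_i\le u(x_i)$ and $u_i\le u(x_i)$ implies $u_i\le u_{\max}$; consequently
\[
\supp(\widehat{\omega}_{u_{\min}})\cap D\,\subset\,\supp(\widehat{\omega}_u)\cap D\,\subset\,\supp(\widehat{\omega}_{u_{\max}})\cap D.
\]
Since adding point defects can only shorten $\dist_R^D$-paths (all of which remain in $D$), this produces the sandwich
\[
\dist_R^D(\pi_R(x),\pi_R(y);\widehat{\omega}_{u_{\max}})\le \dist_R^D(\pi_R(x),\pi_R(y);\widehat{\omega}_u)\le \dist_R^D(\pi_R(x),\pi_R(y);\widehat{\omega}_{u_{\min}}),
\]
up to a $\prob_R$-small correction from defect balls whose centers lie in a $1/R$-neighborhood of $\partial D$. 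Applying \thmref{thm:conv_uniform_compact_point_defects} separately to $D$ with each constant intensity --- the marginals $\widehat{\omega}_{u_{\min}}$ and $\widehat{\omega}_{u_{\max}}$ under $\prob_R$ are distributed as the corresponding uniform Poisson processes, so the almost-sure statement there transfers to convergence in probability under $\prob_R$ --- I obtain, for every $\e>0$,
\[
\prob_R\brk{\sup_{x,y\in D}\bigl|\dist_R^D(\pi_R(x),\pi_R(y);\widehat{\omega}_{u_{\min}})-\eta(u_{\min})\,|x-y|\bigr|<\e}\longrightarrow 1,
\]
together with the analogous statement for $u_{\max}$. In parallel, convexity of $D$ and monotonicity of $\eta$ (\propref{prop:continuity_of_eta_u}) give the deterministic sandwich
\[
\eta(u_{\max})\,|x-y|\;\le\;\boldd_{\eta\circ u}^D(x,y)\;\le\;\eta(u_{\min})\,|x-y|,
\]
with the upper bound obtained by evaluating the length functional on the Euclidean segment $[x,y]\subset D$, and the lower bound from the pointwise inequality $\eta\circ u\ge\eta(u_{\max})$ along every admissible path.

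Combining the three bounds, on the high-probability event above both $\boldd_{\eta\circ u}^D(x,y)$ and $\dist_R^D(\pi_R(x),\pi_R(y);\widehat{\omega}_u)$ lie inside $[\eta(u_{\max})|x-y|-\e,\,\eta(u_{\min})|x-y|+\e]$, so their difference is at most $\Delta\eta\cdot|x-y|+2\e\le\Delta\eta\cdot\diam_\euc D+2\e$. Choosing $\e<\Delta\eta\cdot\diam_\euc D$ (which is possible for $\Delta\eta>0$ by taking $R$ large) bounds $\dis\pi_R$ by $3\Delta\eta\cdot\diam_\euc D$, as required; the degenerate case $\Delta\eta=0$ reduces directly to \thmref{thm:conv_uniform_compact_point_defects} since then $\boldd_{\eta\circ u}^D$ is just a scaled Euclidean distance. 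The principal technical obstacle is the boundary correction mentioned above: although defects centered within $1/R$ of $\partial D$ are numerous ($\Theta(R^{d-1})$ of them in expectation), they affect $\dist_R^D$ only inside a shrinking boundary shell and their cumulative contribution can be absorbed into the $\e$-errors, by an argument analogous to the one used to pass from $\dist_R$ to $\dist_R^D$ in the proof of \thmref{thm:conv_uniform_compact_point_defects}.
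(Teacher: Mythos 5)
Your proposal is correct and follows essentially the same route as the paper: the coupling-induced monotone sandwich $\homega_{u_{\min}}\le\homega_u\le\homega_{u_{\max}}$ on both the random distances and the limit distances, combined with \thmref{thm:conv_uniform_compact_point_defects} applied to the two constant intensities $u_{\min}$ and $u_{\max}$ (the paper organizes the final estimate as a chain of triangle inequalities rather than your common-interval argument, but the content is identical, and your explicit remark about defect centers in the $1/R$-shell around $\partial D$ is a point the paper glosses over). The only caveat is your treatment of the degenerate case $\Delta\eta=0$, which does not ``reduce directly'' to \thmref{thm:conv_uniform_compact_point_defects} since $u$ need not be constant there; but the stated probability bound is only meaningful for $\Delta\eta>0$ in the paper's own proof as well, so this is not a substantive gap.
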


\begin{proof}
Consider the coupling as above between $\homega_u$, $\homega_{u_{\min}}$ and $\homega_{u_{\max}}$, with $u_{\min}$ and $u_{\max}$ viewed as constant functions. Then,
\[
\dist_R^D (\cdot;\homega_{u_{\min}}) \ge \dist_R^D (\cdot;\homega_{u}) \ge \dist_R^D (\cdot;\homega_{u_{\max}})
\quad \text{and} \quad
\boldd^D_{\eta(u_{\min})} \ge \boldd^D_{\eta\circ u} \ge \boldd^D_{\eta(u_{\max})},
\]
and in particular,
\beq
\label{eq:distmin_distmax1}
|\dist_R^D (\cdot;\homega_{u_{\min}}) - \dist_R^D (\cdot;\homega_{u})| \le |\dist_R^D (\cdot;\homega_{u_{\min}}) - \dist_R^D (\cdot;\homega_{u_{\max}})|,
\eeq
and
\beq
\label{eq:distmin_distmax2}
|\boldd^D_{\eta(u_{\min})} - \boldd^D_{\eta\circ u}| \le |\boldd^D_{\eta(u_{\min})} - \boldd^D_{\eta(u_{\max})}| \le \Delta \eta \cdot \diam_\euc D.
\eeq
Using \eqref{eq:distmin_distmax1} and \eqref{eq:distmin_distmax2}, and by a repeated application of the triangle inequality,
\beq
\label{eq:dis_pi_R_convex}
\begin{split}
\dis \pi_R &= \sup_{x,y\in D} |\boldd^D_{\eta\circ u}(x,y) - \dist_R^D(x,y;\homega_u)| \\
& \le \sup_{x,y\in D} |\boldd^D_{\eta\circ u}(x,y) - \boldd^D_{\eta(u_{\min})}(x,y)| + |\boldd^D_{\eta(u_{\min})}(x,y) - \dist^D_R(x,y;\homega_{u_{\min}})| \\
&\qquad + |\dist^D_R(x,y;\homega_{u_{\min}}) - \dist_R^D(x,y;\homega_u)| \\
& \le \sup_{x,y\in D}  |\boldd^D_{\eta(u_{\max})}(x,y) - \boldd^D_{\eta(u_{\min})}(x,y)| + |\boldd^D_{\eta(u_{\min})}(x,y) - \dist^D_R(x,y;\homega_{u_{\min}})| \\
&\qquad+  |\dist^D_R(x,y;\homega_{u_{\min}}) - \dist^D_R(x,y;\homega_{u_{\max}})| \\
& \le \sup_{x,y\in D}  2|\boldd^D_{\eta(u_{\max})}(x,y) - \boldd^D_{\eta(u_{\min})}(x,y)| + 2 |\boldd^D_{\eta(u_{\min})}(x,y) - \dist^D_R(x,y;\homega_{u_{\min}})| \\
&\qquad +  |\boldd_{\eta(u_{\max})}(x,y) - \dist^D_R(x,y;\homega_{u_{\max}})| \\
& \le 2\Delta\eta\cdot \diam_\euc D +2 |\boldd^D_{\eta(u_{\min})}(x,y) - \dist^D_R(x,y;\homega_{u_{\min}})| \\
&\qquad +  |\boldd_{\eta(u_{\max})}(x,y) - \dist^D_R(x,y;\homega_{u_{\max}})|.
\end{split}
\eeq
The result now follows by applying \thmref{thm:conv_uniform_compact_point_defects} to the last two addends. $~~$
\end{proof}


We now prove Part 1 of \thmref{thm:Main_theorem}. 
The idea behind the proof is the following: We partition $D$ into small convex sets, in each $u$ varies only a little.
\propref{prop:u_continuity} states that with high probability, the distortion of the projection in each set is small.
We then glue the sets together and show that the accumulated distortion remains small.
A technical complication arises when $D$ cannot be partitioned into finitely many convex sets. We overcome this problem by considering sets slightly larger and slightly smaller than $D$, denoted by $\overline{D}$ and $\underline{D}$, that can be partitioned in such a way.

\paragraph{Step I: Partitioning $D$.}

Let $n$ be a large natural number to be chosen later (independent of $R$). Cover $D$ with cubes of edge length $1/n$, whose corners are on the lattice $\frac{1}{n}\bbZ^d$; henceforth, ``vertices'' refers to the corners of the cubes.  
Denote the cubes that intersect $D$ but not $\partial D$ by $\Box_{n,1},\ldots, \Box_{n,k_n}$, and those that intersect $\partial D$ by $\Box_{n,k_n+1},\ldots, \Box_{n,k_n+m_n}$. Since $D$ is compact, there exists a slightly larger compact set $\Box$ containing $\bigcup_{i=1}^{k_n + m_n} \Box_{n,i}$ for all $n\geq 1$. The function $\eta\circ u$ varies on each cube $\Box_{n,i}$, by some $\Delta_{n,i}$. Denote $\Delta_n = \max_i \Delta_{n,i}$. Since $\eta\circ u$ is continuous (\propref{prop:continuity_of_eta_u}) and $\Box$ is compact, $\Delta_n\to 0$ as $n\to\infty$.
Denote by $T_n$ the union of the facets of the cubes and let
\[
\underline{D} = \bigcup_{i=1}^{k_n} \Box_{n,i},\qquad \overline{D} = \bigcup_{i=1}^{k_n + m_n} \Box_{n,i}.
\]
It follows from the definition of $\underline{D}$ and $\overline{D}$ that $\underline{D}\subset D \subset \overline{D}$ and both $|\diam \overline{D} - \diam D|$ and $|\diam \underline{D} - \diam D|$ are of order $1/n$. Moreover, we claim that
\beq
\label{eq:dist_barD_underlineD}
\lim_{n\to \infty} \sup_{x,y\in \underline{D}} |\boldd_{\eta\circ u}^{\overline{D}}(x,y) -  \boldd_{\eta\circ u}^{\underline{D}}(x,y) | = 0.
\eeq
Indeed, note that $\boldd_{\eta\circ u}^{\overline{D}} \le \boldd_{\eta\circ u}^{\underline{D}}$ for \revision{any} pair of points in $\underline{D}$, and therefore we only need to prove that for every $x,y\in\underline{D}$ and every simple curve $\gamma\subset \overline{D}$ between $x$ and $y$, there exists a simple curve $\gamma' \subset \underline{D}$ between $x$ and $y$ such that 
\begin{equation}\label{eq:dist_barD_underline_paths}
	\len_{\boldd_{\eta\circ u}}(\gamma') < \len_{\boldd_{\eta\circ u}} (\gamma) + o(1),
\end{equation} 
where $o(1)$ is with respect to $n$, independent of $R$, \revision{$x$ and $y$}. To prove \eqref{eq:dist_barD_underline_paths}, note that since $D\subset \R^d$ is compact,  $\overline{D}$ and $\underline{D}$ are homotopic whenever $n$ is large enough ($\partial D$ is a compact submanifold with corners and therefore its $1/n$-neighborhood is homotopic to itself for $n$ large enough) and $d_H(\overline{D},\underline{D}) < \sqrt{d}/n$. 
Then, for every simple curve $\gamma\subset \overline{D}$ between $x$ and $y$ in $\underline{D}$, there exists a simple curve $\gamma' \subset \underline{D}$ between $x$ and $y$ such that the $d_H (\gamma,\gamma') < 2\sqrt{d}/n$ and $\len_\euc (\gamma') < \len_\euc(\gamma) + c/n$ for some $c=c(d,D)>0$ (see \figref{fig:section8}).
Since $\eta\circ u$ is continuous, $\gamma$ and $\gamma'$ are simple, and their Hausdorff distance is $o(1)$, \eqref{eq:dist_barD_underline_paths} follows.

\begin{figure}
\begin{center}
\includegraphics[height=2.5in]{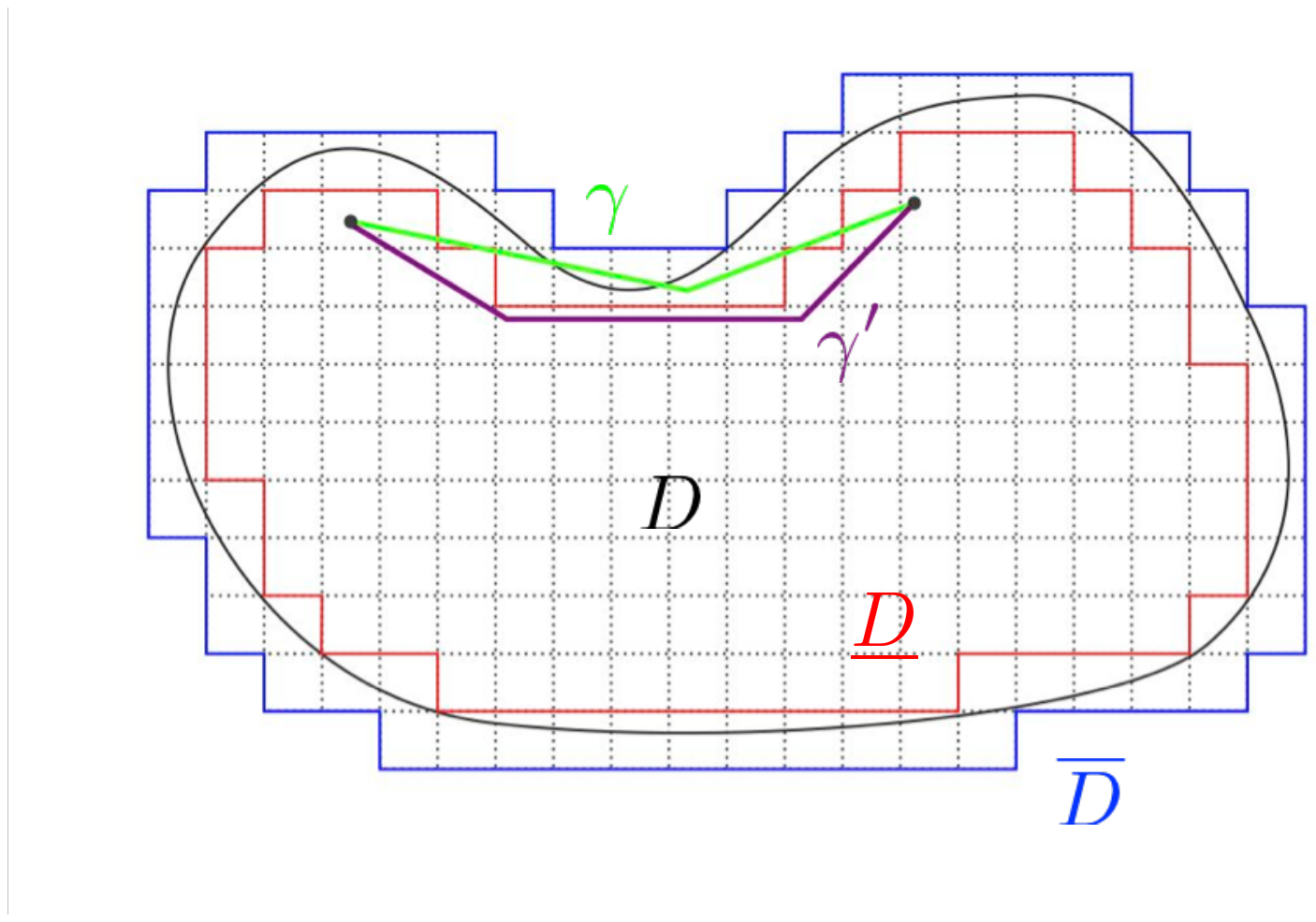}
\end{center}
\caption{Illustration of the construction of step I of the proof. The boundary of the domain $D$ is marked in black, and the dotted lines represent the grid that forms $\Box_{n,i}$. The boundaries of $\overline{D}$ and $\underline{D}$ are marked with blue and red lines, respectively. For every simple curve $\gamma\subset\overline{D}$ between two points in $\underline{D}$ there is a curve $\gamma'\subset \underline{D}$ of similar length which is close to $\gamma$ in the Hausdorff distance (see \eqref{eq:dist_barD_underline_paths}). Therefore, the inclusion of $\underline{D}$ in $\overline{D}$ has a vanishing distortion as $n\to\infty$, as stated in \eqref{eq:dist_barD_underlineD}.}
\label{fig:section8}
\end{figure}

\paragraph{Step II: Each geodesic \revision{(w.r.t.~either $\boldd_{\eta\circ u}^D$ or $\dist_R^D$)} intersects only $O(n)$ cubes (in each of which the distortion is small).}

Consider a vertex of one of the cubes, and a facet of the same cube that does not intersect it. Since distances with respect to $\boldd_{\eta\circ u}$ are bounded from below by $\underline{\eta} = \min_{\overline{D}} \eta(u)$ times the Euclidean distances, the distance between the vertex and the facet is at least $\underline{\eta}/n$.
Therefore, a ball in $(D,\boldd_{\eta\circ u}^D)$ of radius $\underline{\eta}/2n$  intersects at most $2^d$ cubes. 
It follows that all geodesics in $(D,\boldd_{\eta\circ u}^D)$  (or more generally, every curve of length $\diam_\euc D$ or less) intersect at most 
\[
\frac{\diam_\euc D}{\underline{\eta}/2n} \cdot 2^d = \frac{2^{d+1} \diam_\euc D}{\underline{\eta}} \cdot n
\] 
of the cubes $\Box_{n,1},\ldots, \Box_{n,k_n+m_n}$ (see Lemma 5.3 in \cite{KM15b} for a similar argument). By the same reasoning, each geodesic in either $(\overline{D},\boldd_{\eta\circ u}^{\overline{D}})$ or $(\underline{D},\boldd_{\eta\circ u}^{\underline{D}})$ intersects at most $2^{d+1} n \diam\revision{_\euc} D/\underline{\eta}$ cubes.

Next, cover similarly $D_R$  by ``cubes" $\Box_{n,i}^R = \pi_R(\Box_{n,i})$; also, define $\underline{D}_R$ and $\overline{D}_R$ in an analogous way.
Applying \propref{prop:u_continuity} to each cube $\Box_{n,i}$, we obtain that 
\beq
\label{eq:cubes_GH_conv}
\lim_{R\to\infty} \prob_{u,R} (\dis \pi_R|_{\Box_{n,i}} < 3\sqrt{d} \Delta_n/n \revision{\quad \forall i=1,\dots,k_n+m_n}) = 1.
\eeq

For a given $R$, denote 
\[
A_R=\left\{\dis \pi_R|_{\Box_{n,i}} < 3\sqrt{d} \Delta_n/n \revision{\quad \forall i=1,\dots,k_n+m_n}\right\}. 
\]
Assume that $A_R$ holds, and consider a vertex in one of the cubes in $\overline{D}_R$, and a facet of the same cube that does not intersect the vertex. 
The distance between the vertex and the facet is at least $(\underline{\eta} - 3\sqrt{d} \Delta_n )/n$, which is positive for large enough $n$. 
By the same reasoning as above, each geodesic in either $\underline{D}_R$ or $\overline{D}_R$ intersects at most $2^{d+1} n \diam D/(\underline{\eta} - 3\sqrt{d} \Delta_n)$ cubes. 

\paragraph{Step III: Bounding the distortion.}

We now want to show that when $A_R$ holds, i.e., the distortion within each cube is small, then the distortion of $\pi_R:(\overline{D},\boldd_{\eta\circ u}^{\overline{D}})\to (\revision{\overline{D}_R},\dist_R^{\overline{D}})$ is small as well (and similarly with $\underline{D}$). The idea,  which is similar to the proof of Theorem 3.1 in \cite{KM15}, is that a geodesic $\gamma$ in $(\overline{D},\boldd_{\eta\circ u}^{\overline{D}})$ crosses $O(n)$ cubes at points $x_1,\ldots x_m$, and at each crossing it accumulates a distance $\boldd_{\eta\circ u} (x_i, x_{i+1})$. When $A_R$ holds,
\[
|\boldd_{\eta\circ u} (x_i, x_{i+1}) - \dist_R^D (\pi_R (x_i), \pi_R(x_{i+1}))| \le \frac{3\sqrt{d}\Delta_n}{n},
\]
i.e.~the distortion accumulated in each cube is $O(\Delta_n/n)$, so the total distortion is $O(n)\cdot O(\Delta_n/n) = O(\Delta\revision{_n}) \to 0$. \revision{Here and below, the constants in $O(\cdot)$ only depend on $d$, $\diam_\e D$ and $\underline{\eta}$, which are fixed throughout the proof.}

Formally, let $x,y\in \overline{D}$, and let $\gamma$ be a geodesic in $(\overline{D},\boldd_{\eta\circ u}^{\overline{D}})$ between $x$ and $y$. If $\gamma$ does not intersect the facets of the boxes transversely, we can take $\gamma$ to be infinitesimally longer such that it does. Therefore, without loss of generality $\gamma$ intersects the facets of the boxes at a finite number of points $x_1,\ldots,x_m$, with $m < 2^{d+1} n \diam D / \underline{\eta} = O(n)$. Denote $x=x_0$ and $y=x_{m+1}$.
When the event $A_R$ holds,
\[
\begin{split}
\dist_R^{\overline{D}} (\pi_R (x), \pi_R(y) ) & \le \sum_{i=1}^{m} \dist_R^D (\pi_R (x_i), \pi_R(x_{i+1}) ) \le \sum_{i=1}^{m} \brk{ \boldd_{\eta\circ u} (x_i, x_{i+1}) + \frac{3\sqrt{d}\Delta_n}{n} } \\
	& \le  \sum_{i=1}^{m} \boldd_{\eta\circ u}^{\overline{D}} (x_i, x_{i+1}) + m \cdot O(\Delta_n/n) = \len_{\boldd_{\eta\circ u}^{\overline{D}}} (\gamma) + O(\Delta_n)\\
	& = \boldd_{\eta\circ u}^{\overline{D}}(x,y) + O(\Delta_n).
\end{split}
\]
A similar argument, on a geodesic $\sigma$ between $\pi_R (x)$ and $\pi_R(y)$, shows that
\[
\boldd_{\eta\circ u}^{\overline{D}}(x,y) \le \dist_R^{\overline{D}} (\pi_R (x), \pi_R(y) ) + O(\Delta_n).
\]
Therefore, we obtain that when $A_R$ holds,
\beq
\label{eq:dis_barD}
\sup_{x,y\in \overline{D}} |\boldd_{\eta\circ u}^{\overline{D}}(x,y) -  \dist_R^{\overline{D}} (\pi_R (x), \pi_R(y) )| \le C \Delta_n,
\eeq
for some $C>0$ independent of $n$.
Similarly, when $A_R$ holds,
\beq
\label{eq:dis_underlineD}
\sup_{x,y\in \underline{D}} |\boldd_{\eta\circ u}^{\underline{D}}(x,y) -  \dist_R^{\underline{D}} (\pi_R (x), \pi_R(y) )| \le C \Delta_n.
\eeq

If $D=\overline{D}$, then the proof is complete. The rest of the proof deals with the case where $\underline{D}\subsetneq D\subsetneq \overline{D}$.
In $\underline{D}$ we have that 
$\boldd_{\eta\circ u}^{\overline{D}} \le \boldd_{\eta\circ u}^{D} \le \boldd_{\eta\circ u}^{\underline{D}}$,
hence
\beq
\label{eq:dist_overline_underline1}
|\boldd_{\eta\circ u}^{\underline{D}} - \boldd_{\eta\circ u}^{D}| \le |\boldd_{\eta\circ u}^{\underline{D}} - \boldd_{\eta\circ u}^{\overline{D}}|.
\eeq
Similarly, $\dist_R^{\overline{D}} \le \dist_R^{D} \le \dist_R^{\underline{D}}$  in $\underline{D}_R$, hence
\beq
\label{eq:dist_overline_underline2}
|\dist_R^{\underline{D}} - \dist_R^{D}| \le |\dist_R^{\underline{D}} - \dist_R^{\overline{D}}|.
\eeq
Using \eqref{eq:dist_barD_underlineD}, \eqref{eq:dis_barD}--\eqref{eq:dist_overline_underline2}, we obtain (similarly as in  \eqref{eq:dis_pi_R_convex}) that when $A_R$ holds,
\[
\label{eq:dis_D}
\begin{split}
&\sup_{x,y\in \underline{D}} |\boldd_{\eta\circ u}^{D}(x,y) -  \dist_R^{D} (\pi_R (x), \pi_R(y) )|  \\ 
	&\qquad \le \sup_{x,y\in \underline{D}} |\boldd_{\eta\circ u}^{D}(x,y) -  \boldd_{\eta\circ u}^{\underline{D}}(x,y) | 
								+ | \boldd_{\eta\circ u}^{\underline{D}}(x,y) -  \dist_R^{\underline{D}} (\pi_R (x), \pi_R(y) )| \\
	&\qquad \qquad				+ |\dist_R^{\underline{D}} (\pi_R (x), \pi_R(y) ) -  \dist_R^{D} (\pi_R (x), \pi_R(y) )| 	\\
	&\qquad \le \sup_{x,y\in \underline{D}} |\boldd_{\eta\circ u}^{\overline{D}}(x,y) -  \boldd_{\eta\circ u}^{\underline{D}}(x,y) | 
								+ | \boldd_{\eta\circ u}^{\underline{D}}(x,y) -  \dist_R^{\underline{D}} (\pi_R (x), \pi_R(y) )| \\
	&\qquad \qquad				+ |\dist_R^{\underline{D}} (\pi_R (x), \pi_R(y) ) -  \dist_R^{\overline{D}} (\pi_R (x), \pi_R(y) )| \\
	&\qquad \le \sup_{x,y\in \underline{D}} 2|\boldd_{\eta\circ u}^{\overline{D}}(x,y) -  \boldd_{\eta\circ u}^{\underline{D}}(x,y) | 
								+ 2| \boldd_{\eta\circ u}^{\underline{D}}(x,y) -  \dist_R^{\underline{D}} (\pi_R (x), \pi_R(y) )| \\
	&\qquad \qquad				+ |\boldd_{\eta\circ u}^{\overline{D}}(x,y) -  \dist_R^{\overline{D}} (\pi_R (x), \pi_R(y) )| 	\\
	&\qquad \le \sup_{x,y\in \underline{D}} 2|\boldd_{\eta\circ u}^{\overline{D}}(x,y) -  \boldd_{\eta\circ u}^{\underline{D}}(x,y) | 
								+ 3C\Delta_n = o(1), 
\end{split}
\]
where $o(1)$ here is with respect to $n$ and independent of $R$. The first inequality is a triangle inequality; the second follows from \eqref{eq:dist_overline_underline1} and \eqref{eq:dist_overline_underline2}; the third is again a triangle inequality; and the last one follows from  \eqref{eq:dis_barD} and \eqref{eq:dis_underlineD}.

Since $\underline{D}$ is a $1/n$-net in $D$ and $\underline{D}_R$ is a $1/n$-net in $D_R$, we obtain that for $\pi_R: (D,\boldd_{\eta\circ u}^{D} ) \to (D_R,\dist_R^{D})$,
\beq
\label{eq:dis_pi_R_final}
\dis \pi_R = o(1).
\eeq

\revision{Finally, let $\e>0$, and choose $n$ large enough such that $\dis \pi_R < \e$ when $A_R$ holds.
Note that this choice of $n$ is independent of $R$.
It follows that as $R\to\infty$,}
\[
\prob_{u,R}( d_{GH}((D_R,\dist_R^D), (D,\boldd_{\eta\circ u}^D)) > \e ) \le \prob_{u,R}( \dis \pi_R > \e ) \le \prob_{u,R}( A_R^c) \to 0.
\]
\hfill $\blacksquare$

\subsection{Measure convergence}

In this section we prove part 2 of \thmref{thm:Main_theorem}:
\begin{quote}
{\em For every $\e>0$,
		\[
			\lim_{R\to\infty} \prob_{u,R} \brk{ \exists f\in W(D),\,\, \text{s.t.}\,\, \left| \int_D f \, d\mu_{\sigma\circ u}  - \int_{D_R} f\circ\pi_R^{-1} \, d\nu_R \right| >\varepsilon }=0,
		\]
		where 
		\[
		W(D) = \BRK{ f\in C(D) : \|f\|_\infty \le 1 \,\,\text{and}\,\, \text{Lip}(f) \le 1 },
		\]
		$\sigma(u) = e^{-u \kappa_d/d}$, and $\kappa_d$ is the volume of the $d$-dimensional unit ball.
}
\end{quote}

We start with a proposition which is a measure analog of \propref{prop:u_continuity}.
\begin{proposition}
\label{prop:u_continuity_measure}
Let $D$ be a cube in $\R^d$ and let $u_{\min} = \min_D u \ge 0$ and $u_{\max} = \max_D u < u_*$. Then
\[
\lim_{R\to\infty} \prob_{u,R} \brk{ \left| \nu_R(D_R) - \mu_{\sigma\circ u}(D) \right| > 5\Delta\sigma \Leb_d(D)} = 0,
\]
where $\Delta \sigma = \sigma(u_{\min})^d - \sigma(u_{\max})^d$.
\end{proposition}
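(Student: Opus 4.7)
The plan is to mimic the proof of Proposition~\ref{prop:u_continuity}, substituting Proposition~\ref{prop:vol_esimation} for the role played by Theorem~\ref{thm:conv_uniform_compact_point_defects}. I invoke the coupling introduced at the start of Section~\ref{sec:proof_main_thm} between $\homega_u$, $\homega_{u_{\min}}$, and $\homega_{u_{\max}}$ (with $u_{\min}$ and $u_{\max}$ viewed as constant functions). Under this coupling $\homega_{u_{\min}} \le \homega_u \le \homega_{u_{\max}}$, and hence $\calS_R(\homega_{u_{\min}}) \subseteq \calS_R(\homega_u) \subseteq \calS_R(\homega_{u_{\max}})$. Since $\nu_R$ assigns mass $1$ outside $\calS_R$ and mass $\xi^d \le 1$ inside, the map $\omega \mapsto \nu_R(D_R;\omega)$ is monotonically non-increasing, giving
\[
\nu_R(D_R; \homega_{u_{\max}}) \le \nu_R(D_R; \homega_u) \le \nu_R(D_R; \homega_{u_{\min}}).
\]
A direct differentiation of $\sigma^d = e^{-u\kappa_d} + \xi^d(1-e^{-u\kappa_d})$ (using $\xi<1$) shows that $\sigma$ is strictly monotonically decreasing, so likewise $\mu_{\sigma(u_{\max})}(D) \le \mu_{\sigma\circ u}(D) \le \mu_{\sigma(u_{\min})}(D)$.

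Combining the two sandwichings and using $\mu_{\sigma(u_{\min})}(D) - \mu_{\sigma(u_{\max})}(D) = \Delta\sigma \cdot \Leb_d(D)$, I obtain
\[
\left|\nu_R(D_R; \homega_u) - \mu_{\sigma\circ u}(D)\right| \le \Delta\sigma \cdot \Leb_d(D) + \max_{v\in\{u_{\min}, u_{\max}\}} \left|\nu_R(D_R; \homega_v) - \mu_{\sigma(v)}(D)\right|.
\]
Proposition~\ref{prop:vol_esimation}, extended from $[0,1]^d$ to an arbitrary cube via a routine scaling and the translation invariance of the Poisson process, ensures that for each fixed $v\in\{u_{\min}, u_{\max}\}$ the corresponding summand tends to $0$ $\prob_R$-almost surely. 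A union bound over these two values gives convergence to zero in $\prob_R$-probability; in particular, when $\Delta\sigma > 0$, the event that the maximum exceeds $4\Delta\sigma\cdot\Leb_d(D)$ has probability tending to zero, which combined with the displayed inequality yields the desired bound $5\Delta\sigma \cdot \Leb_d(D)$. In the degenerate case $\Delta\sigma = 0$, $u$ is constant on $D$, and Proposition~\ref{prop:vol_esimation} itself already provides an even stronger statement directly.

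I do not anticipate any substantive obstacle: the argument is a direct measure-theoretic analog of Proposition~\ref{prop:u_continuity}; the monotonicity of both $\nu_R$ and $\mu_{\sigma\circ u}$ under the coupling is automatic from the construction; and the required ergodic input has already been established in Section~\ref{sec:concentration_results}. The one minor technicality is the extension of Proposition~\ref{prop:vol_esimation} from the unit cube to a general cube, which follows from the same ergodic-theorem argument applied to a rescaled and translated cube.
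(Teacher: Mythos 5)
Your proposal is correct and follows essentially the same route as the paper: the same coupling of $\homega_{u_{\min}}, \homega_u, \homega_{u_{\max}}$, the same monotone sandwiching of $\nu_R$ and $\mu_{\sigma\circ u}$, and the same reduction to Proposition~\ref{prop:vol_esimation} for the two constant intensities. Your sandwich inequality ($\Delta\sigma\Leb_d(D)$ plus a maximum of two vanishing terms) is in fact slightly tighter than the paper's chain of triangle inequalities, which yields $2\Delta\sigma\Leb_d(D)$ plus three vanishing terms, but both comfortably fit under the $5\Delta\sigma\Leb_d(D)$ threshold.
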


\begin{proof}
As in the proof of \propref{prop:u_continuity} consider the coupling between $\homega_u$, $\homega_{u_{\min}}$ and $\homega_{u_{\max}}$.
Since $u_{\min}\le u\le u_{\max}$, it follows that $\nu_R (\cdot;\homega_{u_{\min}})\ge \nu_R(\cdot; \homega_{u}) \ge \nu_R (\cdot; \homega_{u_{\max}})$.
Then, 
\[
\begin{split}
&\left| \nu_R(D_R;\homega_{u}) - \mu_{\sigma\circ u}(D) \right|\\
	&\quad \le \left| \nu_R(D_R;\homega_{u}) - \nu_R (D_R; \homega_{u_{\max}}) \right| + \left| \nu_R (D_R; \homega_{u_{\max}}) - \mu_{\sigma(u_{\max})}(D) \right| + \left| \mu_{\sigma(u_{\max})}(D) - \mu_{\sigma\circ u}(D) \right| \\
	&\quad \le \left| \nu_R (D_R; \homega_{u_{\min}}) - \nu_R (D_R; \homega_{u_{\max}}) \right| + \left| \nu_R (D_R; \homega_{u_{\max}}) - \mu_{\sigma(u_{\max})}(D) \right| + \left| \mu_{\sigma(u_{\max})}(D) - \mu_{\sigma(u_{\min})}(D) \right| \\
	&\quad \le \left| \nu_R (D_R; \homega_{u_{\min}}) - \mu_{\sigma(u_{\min})}(D) \right| + 2\left| \nu_R (D_R; \homega_{u_{\max}}) - \mu_{\sigma(u_{\max})}(D) \right| + 2\left| \mu_{\sigma(u_{\max})}(D) - \mu_{\sigma(u_{\min})}(D) \right| \\
	&\quad \le \left| \nu_R (D_R; \homega_{u_{\min}}) - \mu_{\sigma(u_{\min})}(D) \right| + 2\left| \nu_R (D_R; \homega_{u_{\max}}) - \mu_{\sigma(u_{\max})}(D) \right| + 2\Delta\sigma \Leb_d(D),
\end{split}
\]
from which the result follows by \propref{prop:vol_esimation}.
\end{proof}

We now prove part \revision{2} of \thmref{thm:Main_theorem}.
In order to simplify the notation, we will consider $\nu_R$ as a measure on $D$ (assigning zero measure to $\calS_R$), rather than on $D_R$. In this way there is no composition with $\pi_R^{-1}$.

As in the proof of the first part, partition $D$ into cubes $\Box_{n,1},\ldots, \Box_{n,k_n+m_n}$, and let $\overline{D} = \bigcup_{i=1}^{k_n + m_n} \Box_{n,i}$.
We extend both $\mu_{\sigma\circ u}$ and $\nu_R$ to $\overline{D}$ by setting $\overline{D}\setminus D$ to be a null-set.

Let $\e>0$ and choose $n>\sqrt{d}/\e$ large enough such that $\Delta \sigma<\e$ in each $\Box_{n,i}\cap D$ and $\Leb_d(\overline{D}) < 2\Leb_d(D)$.
By applying \propref{prop:u_continuity_measure} to each $\Box_{n,i}$ we obtain that
\[
\lim_{R\to\infty} \prob_{u,R} (G_R) = 1,
\]
where 
\[
G_R = \BRK{ \left| \nu_R(\Box_{n,i}) - \mu_{\sigma\circ u}(\Box_{n,i}) \right| < 5\e \Leb_d(\Box_{n,i})\,\, \forall i=1,\ldots\revision{,} k_n+m_n}.
\]

Let $f\in W(D)$. Due to the choice of $n$ and since $\text{Lip}(f) \le 1$, it follows that $\sup_{\Box_{n,i}\cap D} f - \inf_{\Box_{n,i}\cap D} f < \e$ for every $i$. Denote $f_i =  \inf_{\Box_{n,i}\cap D} f$.
Assume that $G_R$ holds. Then,
\beq
\begin{split}
&\left| \int_D f \, d\mu_{\sigma\circ u}  - \int_D f\, d\nu_R \right| 
	\le \sum_{i=1}^{k_n+m_n} \left| \int_{\Box_{n,i}} f \, d\mu_{\sigma\circ u}  -  \int_{\Box_{n,i}} f\, d\nu_R \right| \nonumber\\
	&\qquad \le \sum_{i=1}^{k_n+m_n} \left| \int_{\Box_{n,i}} f \, d\mu_{\sigma\circ u}  -  f_i \,\mu_{\sigma\circ u}(\Box_{n,i}) \right| + |f_i| \left| \,\mu_{\sigma\circ u}(\Box_{n,i})  - \,\nu_R(\Box_{n,i}) \right| + \left| f_i \,\nu_R(\Box_{n,i})  -  \int_{\Box_{n,i}} f\, d\nu_R \right|\nonumber\\
	&\qquad \le \sum_{i=1}^{k_n+m_n} 2\e\Leb_d(\Box_{n,i}) +\|f\|_\infty \left| \,\mu_{\sigma\circ u}(\Box_{n,i})  - \,\nu_R(\Box_{n,i}) \right|  \nonumber\\
	&\qquad \le 7\e \sum_{i=1}^{k_n+m_n} \Leb_d(\Box_{n,i}) = 7\e \Leb_d(\overline{D}) < 14\e \Leb_d(D).  \nonumber
\end{split}
\eeq
Therefore, \revision{as $R\to\infty$,}
\[
\prob_{u,R} \brk{ \exists f\in W(D),\,\, \text{s.t.}\,\, \left| \int_D f \, d\mu_{\sigma\circ u}  - \int_{D} f \, d\nu_R \right| >14\e \Leb_d(D) } \le \prob_{u,R} (G_R^{c}) \to 0,
\]
which completes the proof. \hfill $\blacksquare$

\subsection{Asymptotic surjectivity}

In this section we prove Part 4 of \thmref{thm:Main_theorem}:
\begin{quote}
{\em
	For every $\e>0$,
		\begin{equation}
		\label{eq:D_R_asymptotically_dense}
			\lim_{R\to\infty} \prob_{u,R} \brk{ d_H(D,\pi^{-1}_R(D'_R)) >\varepsilon }=0,
		\end{equation}
	where $D'_R = \{x\in D_R ~:~ |\pi_R^{-1}(x)|=1\}$ and $d_H$ is the Hausdorff distance in $\R^d$.
}
\end{quote}

This is an immediate consequence of \propref{pn:asymptotic_surjectivity_constant_u}, using the above coupling between $u$ and $u_{\max} = \max_D u$. 
Denote 
$\overline{D}'_R = \{x\in D_R(\homega_{u_{\max}}\revision{)} ~:~ |\pi_R^{-1}(x;\homega_{u_{\max}})|=1\}$. 
Then 
	\beq
	\label{eq:D_R_and_barD_R}
		\pi^{-1}_R(\overline{D}'_R) \subset \pi^{-1}_R(D'_R) \subset D.
	\eeq
By \propref{pn:asymptotic_surjectivity_constant_u} for every $\e>0$
	\begin{equation}
	\label{eq:barD_R_asymptoticaly_dense}
			\lim_{R\to\infty} \prob_{R} \brk{ d_H(D,\pi^{-1}_R(\overline{D}'_R)) >\e }=0,
	\end{equation}
and combining \eqref{eq:barD_R_asymptoticaly_dense} with \eqref{eq:D_R_and_barD_R} we obtain \eqref{eq:D_R_asymptotically_dense}. \hfill $\blacksquare$

\subsection{Proof of \corrref{cor:mGH_convergence}}

We follow here definitions and notations of \defref{def:mGH}.

Since $\pi_R:D\to D_R$ is always defined on the whole space $D$ and always onto $D_R$, Part $1$ of \thmref{thm:Main_theorem} as stated in \eqref{eq:dis_pi_R} immediately implies \eqref{eq:GHconvergence}, which proves the first part of \corrref{cor:mGH_convergence}.

For the second \revision{half of the first} part, we need to prove that 
\begin{enumerate}[label=(\roman*)]
	\item $\pi_R^{-1} : D'_{R_n}\to D$ is an $\e_n$-approximation 
	\item For every $f\in C(D)$,
		\beq
		\label{eq:measure_convergence_mu_R}
			\lim_{n\to\infty} \int_{D'_{R_n}} f\circ \pi_{\revision{R_n}}^{-1} \,d\nu_{\revision{R_n}} = \int_D f \, d\mu_{\sigma\circ u}.
		\eeq
		Note that since $\nu_R(D_R\setminus D'_R) =0$ by definition, we can replace the integrals on the left-hand side by integrals on $D_R$.
\end{enumerate}

Part 1 (the event in \eqref{eq:Main_distortion}) implies that $\dis \pi_{R_n}^{-1} < \e_n$. Part 4 (the event in \eqref{eq:Main_surjective}) implies that the $\e_n$-neighborhood of $\pi_{R_n}^{-1}(D'_{R_n})$ with respect to the Euclidean metric in $\R^d$ contains $D$, hence also with respect to $\boldd_{\eta\circ u}^D$. 
By applying $\pi_{R_n}$, it follows that the $\e_n$-neighborhood of $D'_{R_n}$ in $D_{R_n}= \pi_{R_n}(D)$ with respect to $d_{R_n}^D$ is indeed the whole $D_{R_n}$.
This shows that $\pi_R^{-1} : D'_{R_n}\to D$ is indeed an $\e_n$-approximation.

To prove \eqref{eq:measure_convergence_mu_R}, note that it suffices to prove it for Lipschitz functions on $D$.
Indeed, suppose we proved \eqref{eq:measure_convergence_mu_R} for Lipschitz functions, and let $f\in C(D)$. For every $\e>0$ there is a Lipschitz function $g$ on $D$ such that $\|f-g\|_\infty<\e$, hence 
\[
\begin{split}
& \limsup_n \left| \int_{D_{R_n}} f\circ \pi_{\revision{R_n}}^{-1} \,d\nu_\revision{R_n} - \int_D f \, d\mu_{\sigma\circ u} \right| \\
	&\quad \le \limsup_n \left| \int_{D_{R_n}} g\circ \pi_{\revision{R_n}}^{-1} \,d\nu_{\revision{R_n}} - \int_D g \, d\mu_{\sigma\circ u} \right| + \e\nu_R(D_{R_n}) + \e \mu_{\sigma\circ u}(D) \\
	&\quad \le 2\e \Leb_d(D).
\end{split}
\]
To prove \eqref{eq:measure_convergence_mu_R} for a Lipschitz function $f$, note that for $M>0$ large enough, $f/M\in W(D)$.
Therefore, when Part 2 (the event in \eqref{eq:Main_measure}) holds, 
\[
\left| \int_{D_{R_n}} f\circ \pi_{\revision{R_n}}^{-1} \,d\nu_{\revision{R_n}} - \int_D f \, d\mu_{\sigma\circ u} \right|
	\le M \e_n \to 0,
\]
which completes the proof. \hfill $\blacksquare$


\bibliographystyle{alpha}

\newcommand{\etalchar}[1]{$^{#1}$}
\def\cprime{$'$}

\end{document}